\theoremstyle{plain}
\newtheorem{theorem}{Theorem}[section]
 \newtheorem{corollary}[theorem]{Corollary}
 \newtheorem{lemma}[theorem]{Lemma}
 \newtheorem{proposition}[theorem]{Proposition}
 \theoremstyle{definition}
 \newtheorem{definition}[theorem]{Definition}
 \theoremstyle{remark}
 \newtheorem{remark}[theorem]{Remark}
 \numberwithin{equation}{section}
  \newtheorem{example}[theorem]{Example}
\def\eps{\varepsilon}
\def\R{{\mathbb R}}
\def\N{{\mathbb N}}
\mathchardef\mhyphen="2D 
\def \PD { {\rm P\!\mhyphen \!\rm D} } 
\def \pd { {{\mathfrak p}\mhyphen {\mathfrak d}}}
\def \PM { {\rm P\!\mhyphen \!\rm M} } 
\def \ad {{\rm ad}}
\def \Ghat{\widehat{G}}
\def \Hhat{\widehat{H}}
\def \PullOne{\mathcal{I}_\Phi}
\def \PullTwo{\mathcal{U}_\Phi}
\def \pr {{\rm pr}} 
\def\Tend#1#2{\mathop{\longrightarrow}\limits_{#1\rightarrow#2}}
\numberwithin{equation}{section}
\begin{document}
\title[]{Geometric invariance of the semi-classical calculus\\ 
on nilpotent graded Lie groups}
\author[C. Fermanian]{Clotilde~Fermanian~Kammerer}
\address[C. Fermanian Kammerer]{
Univ Paris Est Creteil, CNRS, LAMA, F-94010 Creteil, Univ Gustave Eiffel, LAMA, F-77447 Marne-la-Vallée, France
}
\email{clotilde.fermanian@u-pec.fr}
\author[V. Fischer]{V\'eronique Fischer}\address[V. Fischer]%
{University of Bath, Department of Mathematical Sciences, Bath, BA2 7AY, UK} 
\email{v.c.m.fischer@bath.ac.uk}
\author[S. Flynn]{Steven Flynn}
\address[S. Flynn]{University of Bath, Department of Mathematical Sciences, Bath, BA2 7AY, UK} 
\email{spf34@bath.ac.uk}

\begin{abstract} 
In this paper, we consider the semi-classical setting constructed on nilpotent graded Lie groups by means of representation theory.
Our aim is to analyze 
the effects of the pull-back by diffeomorphisms on pseudodifferential operators. We restrict to diffeomorphisms  that preserve the filtration  and prove that they are uniformly
Pansu differentiable.  We show that
 the pull-back of a semi-classical pseudodifferential operator by such a diffeomorphism  has a semi-classical symbol that is  expressed at leading order in terms of the Pansu differential. 
 Finally, we interpret the geometric meaning of this invariance  in the setting of filtered manifolds. 
 \end{abstract}

\subjclass[2010]{43A80, 47G30, 58J40}
\keywords{
Analysis on nilpotent Lie groups, 
Semi-classical analysis on nilpotent Lie groups and on filtered manifolds,
Abstract harmonic analysis.  
}

\maketitle

\makeatletter
\renewcommand\l@subsection{\@tocline{2}{0pt}{3pc}{5pc}{}}
\makeatother

\tableofcontents

\section{Introduction}

Semi-classical analysis started to develop in the 70s and has proved a flexible tools for the analysis of PDEs~\cite{Helf}. It crucially relies on a microlocal viewpoint and the use of semi-classical pseudodifferential calculus. Such an approach has been recently developed on nilpotent  graded Lie groups with the ambition of taking into account the non-commutativity of the group by using  representation theory and the associated Fourier theory~\cite{BFG,FR,FF1}. The symbols of the pseudodifferential operators are then fields of operators on the product $G\times\widehat G$ of the group $G$ and its dual space $\widehat G$.
 Like in the Euclidean case~\cite{Zwobook}, this  semi-classical pseudodifferential calculus enjoys a non-commutative symbolic calculus which describes taking the adjoints and the composition  as asymptotic sums of symbols~\cite{FR,FF1}. These tools  
 prove efficient for studying  the propagation of oscillations or concentration effects~\cite{FF2} of Schr\"{o}dinger equations, or related questions in PDEs such as the existence of observability inequalities~\cite{FL}.
 
 \smallskip
 
 In the Euclidean case, invariance to leading order by change of variables is an important property of the semiclassical pseudodifferential calculus. This invariance allows for the identification of the symbol as a function on the cotangent space, and is therefore the foundation of the semiclassical calculus on manifolds~\cite{Zwobook}. 
  Here we investigate this property for symbols on graded Lie groups: what happens to the semi-classical pseudodifferential operators introduced in~\cite{FF1} when conjugated by the change of variables induced by  a local smooth diffeomorphism that preserves the filtration of the group.

\smallskip 

It turns out that a smooth function $\Phi$ between two graded Lie groups $G$ and $H$ that preserves the filtration of the groups is uniformly Pansu differentiable (see Theorem~\ref{thm_PDiffFP} below). 
After the publication of the present paper to Journal of Geometric Analysis, it was pointed out to us that this analysis had been performed in greater generality on filtered manifolds in 
\cite[Section 7]{Choi+Ponge}.
However, our analysis here is done from first principles on graded nilpotent Lie groups. Indeed, part of the article consists in revisiting the concept of Pansu differentiability in the context of graded nilpotent Lie groups and its link with being filtration preserving.
  This study allows us to define one-to-one maps between the phase spaces $G\times \widehat G$ and  $H\times \widehat H$ associated with any smooth local diffeomorphisms  $\Phi$ between two nilpotent groups $G$ and $H$ (see Theorem~\ref{thm:inv} below). We investigate the geometric interpretation of these results in the setting of filtered manifolds in the last Section~\ref{sec:app}.

\subsection{Graded groups}\label{sec:intro}

A graded group $G$ is a connected simply connected nilpotent Lie group whose (finite dimensional, real) Lie algebra $\mathfrak g$ admits an $\N$-gradation into linear subspaces, i.e.
$$
\mathfrak g = \oplus_{j=1}^\infty \mathfrak g_j 
\quad\mbox{with} \quad [\mathfrak g_i,\mathfrak g_j]\subseteq \mathfrak g_{i+j}, \;\; 1\leq i\leq j.
$$
With this way of describing $\mathfrak g$, all but a finite number of subspaces $\mathfrak g_j $ are trivial.
We denote by $j=n_G$ the smallest integer such that all the subspaces $\mathfrak g_j $, $j>n_G$,  are trivial.  
If the first stratum $\mathfrak g_1$ generates  the whole Lie algebra, then $\mathfrak g_{j+1}= [\mathfrak g_1,\mathfrak g_j]$ for all $j\in\N_0$  and $n_G$ is the step of the group; the group $G$ is then said to be stratified, and also  (after a choice of basis or inner product for $\mathfrak g_1$)  Carnot.

\smallskip 
 
 The product law on $G$ is derived from the exponential map  $\exp_G : \mathfrak g \to G$ and the Dynkin formula for the Baker-Campbell-Hausdorff formula (see  \cite[Theorem~1.3.2]{FR})
 \begin{align}
 \label{eq_dynkin}
   X*Y &:= \ln_G \left( \exp_G(X)\, \exp_G(Y)\right)
\\& =\sum_{\ell=1}^\infty \frac{(-1)^{\ell-1}}\ell \sum_{
\substack{r,s\in \N_0^\ell\\ r_1+s_1>0,\ldots , r_\ell+s_\ell>0}}
c_{r,s}
\ad^{r_1} X 
\ad^{s_1} Y \ldots  \ad^{r_\ell} X \ad^{s_\ell-1} Y (Y)
\nonumber
 \end{align}
where the coefficients $c_{r,s}$ are known:
$$
c_{r,s} ^{-1} = \Big(\sum_{j=1}^\ell (r_j+s_j)\Big)\,  \Pi_{i=1}^\ell r_i!s_i!
$$
Above, 
the sum over $\ell$ is finite in the nilpotent case. In particular,  the term for which $s_\ell>1$ or $s_\ell=0$ and $r_\ell>1$ is zero, while the term $\ad X\ad^{-1}Y(Y)$ for $s_\ell=0, r_\ell=1$ is understood as $X$. Here $\ln_G$ denotes the inverse map to ${\exp_G}$; we may drop the subscript $G$ for $\exp$ and $\ln$ when the context is clear.

The exponential mapping  is a global diffeomorphism from $\mathfrak g$ onto $G$. Once a basis $X_1,\ldots,X_{n}$
for~$\mathfrak g$ has been chosen, we may  identify 
the points $(x_{1},\ldots,x_n)\in \mathbb R^n$ 
 with the points  $x=\exp(x_{1}X_1+\cdots+x_n X_n)$ in~$G$.
It allows us to define 
the (topological vector) spaces $\mathcal C^\infty(G)$ and $\mathcal S(G)$  of smooth and  Schwartz functions on $G$ identified with $\mathbb R^n$; 
note that the resulting spaces are intrinsically defined as spaces of functions on $G$ and do not depend on a choice of basis. 

The exponential map induces a Haar measure $dx$ on $G$ which is invariant under left and right translations and  defines Lebesgue spaces on~$G$, together with a (non-commutative) convolution for functions 
$f_1,f_2\in\mathcal S(G)$ or in~$L^2(G)$,
$$
 (f_1*f_2)(x):=\int_G f_1(y) f_2(y^{-1}x) dy,\;\; x\in G.
$$

\smallskip

We now construct a basis adapted to the gradation. 
Set $n_j={\rm dim} \, \mathfrak g_j $ for $1\leq j\leq n_G$.
We choose a basis $\{X_1,\ldots, X_{n_1}\}$ of $\mathfrak g_1$ (this basis is possibly reduced to $\emptyset$), then 
$\{X_{n_1+1},\ldots,  X_{n_1+n_2}\}$ a basis of $\mathfrak g_2$
(possibly $\{0\}$)
and so on.
Such a basis $\mathcal B=(X_1, \cdots , X_n)$
of $\mathfrak g$ is said to be adapted to the gradation;
here $n=\dim \mathfrak g = n_1+\ldots +n_G$.  

\smallskip 

The Lie algebra 
 $\mathfrak g$ is  a homogeneous Lie algebra equipped with 
the family of dilations  $\{\delta_r, r>0\}$,  $\delta_r:\mathfrak g\to \mathfrak g$, defined by
$\delta_r X=r^\ell X$ for every $X\in \mathfrak g_\ell$, $\ell\in \N$
\cite{folland+stein_82,FR}.
We re-write the set of integers $\ell\in \N$ such that $\mathfrak g_\ell\not=\{0\}$
into the increasing sequence of positive integers
 $\upsilon_1,\ldots,\upsilon_n$ counted with multiplicity,
 the multiplicity of $\mathfrak g_\ell$ being its dimension.
 In this way, the integers $\upsilon_1,\ldots, \upsilon_n$ become 
 the weights of the dilations and we have $\delta_r X_j =r^{\upsilon_j} X_j$, $j=1,\ldots, n$,
 on the chosen basis of $\mathfrak g$.
 The associated group dilations are defined by
$$
\delta_r(x)=
rx
:=(r^{\upsilon_1} x_{1},r^{\upsilon_2}x_{2},\ldots,r^{\upsilon_n}x_{n}),
\quad x=(x_{1},\ldots,x_n)\in G, \ r>0.
$$
In a canonical way,  this leads to the notions of homogeneity for functions and operators. It also motivates the definition of 
 quasi-norms on $G$ as continuous functions $|\cdot| : G\rightarrow [0,+\infty)$ homogeneous of degree 1
on $G$ which vanishes only at 0. This often replaces the Euclidean norm in the analysis on homogeneous Lie groups.
Any quasi-norm $|\cdot|$ on $G$ satisfies a triangle inequality up to a constant:
$$
\exists C\geq 1, \quad \forall x,y\in G,\quad
|xy|\leq C (|x|+|y|).
$$
Any two homogeneous quasi-norms $|\cdot|_1$ and $|\cdot|_2$ are equivalent in the sense that
$$
\exists C>0, \quad \forall x\in G,\quad
C^{-1} |x|_2\leq |x|_1\leq C |x|_2.
$$
For example, the Haar measure is $Q$-homogeneous
where
$$
Q:=\sum_{\ell\in \mathbb N}\ell n_\ell=\upsilon_1+\ldots+\upsilon_n
$$
 is called the \emph{homogeneous dimension} of $G$.

\subsection{Diffeomorphisms between graded groups}

\subsubsection{Filtration preserving maps}
In this paragraph, we explain the notion of a (smooth or at least~$\mathcal C^1$) map   preserving the natural filtrations between two graded nilpotent groups~$G$ and~$H$.  
We first need to set some notation. For each $x\in G$,
we  denote   
the differential of the left-translation $L_x: G \rightarrow G$  by $x\in G$ at the identity
by 
$$
\tau^G_x := D_0 L_x : \mathfrak g \rightarrow T_x G.
$$
For $y\in H$, we denote by $\tau_y^H$ the analogue map on $H$;
we may omit the superscript $G$ or $H$ if the context is clear. 
These maps allow us to view derivatives between Lie groups as acting on the Lie algebras of the groups: if $\Phi$ is a smooth map from an open set $U$ of $G$ to $H$, 
then we define for each $x\in U$
$$
\mathfrak d_x \Phi:=\left( \tau^H_{\Phi(x)} \right)^{-1}    \circ D_x\Phi \circ \tau^G_x .
$$
By definition, the following diagram commutes :
\begin{equation*}
 \begin{matrix}
   \; &      \;     &    D_x\Phi  &   \;     &\\
   &T_x G & \rightarrow & T_{\Phi(x)} H& \\
\tau^G_x  & \uparrow &               & \uparrow        &  \tau^H_{\Phi(x) } \\
& \mathfrak g & \rightarrow &   \mathfrak h & \\
&                   &    \mathfrak d _x \Phi &       &
\end{matrix}
\ . 
\end{equation*} 
Note that  an equivalent definition for $\mathfrak d_x\Phi : \mathfrak g \to \mathfrak h$ is given by 
\begin{equation}
\label{eq:tgoes0}
\mathfrak d_x\Phi(V)=
\lim_{t\rightarrow 0} t^{-1} 
\ln_H \left(
 \Phi(x)^{-1} \Phi(x{\exp_G}(tV))\right), 
 \qquad
V\in \mathfrak g.
\end{equation}
In this article, we will  use the shorthand:
\begin{equation}
\label{eq_Sshorthand}
 \Phi_x(y)
:= \Phi(x)^{-1} \Phi(xy),
\end{equation}
and for instance \eqref{eq:tgoes0} may be rewritten as 
$$
\mathfrak d_x\Phi(V)=
\lim_{t\rightarrow 0} t^{-1} 
\ln_H \left(
 \Phi_x({\exp_G}(tV))\right)
 =\partial_{t=0}\Phi_x({\exp_G}(tV)),\qquad V\in \mathfrak g.
 $$ 

\begin{definition}
\label{def_filtration_preserving}
Let $\Phi$ be a smooth function from an open set $U$ of $G$ to $H$.
We say that $\Phi$ \emph{preserves the filtration} at $x\in U$  when we have
$$ 
\mathfrak d_x\Phi ( \mathfrak g_j)\subset \mathfrak h_1\oplus \cdots \oplus \mathfrak h_j,
\qquad j=1,2,\ldots
$$
\end{definition}

\subsubsection{Pansu differentiability}

For smooth diffeomorphisms (or at least sufficiently differentiable), the property of preserving the filtration is related to 
 the notion of Pansu differentiability, which we define following~\cite{pansu}:
\begin{definition}
Let $\Phi$ be a map from an open set $U$ of $G$ to $H$, and 
let $x\in U$. 
The function $\Phi$ is \emph{Pansu differentiable at the point} $x$ when for any $z\in G$, 
the following limit exists:
\begin{equation}
\label{eq_lim_defPD}
\lim_{\eps\to 0}  \delta_{\eps^{-1}}\left( \Phi(x)^{-1} \Phi(x\delta_\eps z)\right)= \lim_{\eps\to 0}  \delta_{\eps^{-1}}\left(  \Phi_x(\delta_\eps z)\right). 
\end{equation}
The limit is denoted by $\PD_x \Phi(z)$; the map $z\to \PD_x\Phi(z)$ is called the \emph{Pansu derivative} of $\Phi$ at $x.$
\end{definition}

When defining Pansu differentiability on an open set, we add a further hypothesis of uniformity:
\begin{definition}\label{def:pansu}
Let $\Phi$ be a map from an open set $U$ of $G$ into $H$.
The function $\Phi$ is \emph{uniformly Pansu differentiable on} $U$  when 
\begin{enumerate}
\item $\Phi$ is Pansu differentiable at every point $x\in U$, 
\item the limit in \eqref{eq_lim_defPD} holds locally uniformly on $U\times G.$
\end{enumerate} 
\end{definition}

Part (2) of this definition means that for every point $(x,z)\in U\times G$, there exists a compact neighborhood of $(x,z)$ in $U\times G$ where  the limit in \eqref{eq_lim_defPD} holds uniformly.
Note that this implies that   the limit in \eqref{eq_lim_defPD} holds uniformly on any compact subset of $U\times G$. A similar assumption is made in Section~4 of~\cite{war} with a subtle difference: the limit in \eqref{eq_lim_defPD} is supposed to be locally uniform only in $z\in G$; however, the Pansu derivative at every point is required to be an automorphism of the group $G$. We will see in Section~\ref{sec:FiltPres} that condition (2) in Definition~\ref{def:pansu} implies that the Pansu derivative is a group morphism, and an automorphism when $\Phi$ is a diffeomorphism.

\begin{example}
		Let $\Phi: \mathbb{H}\to \mathbb{R}^3; \;(p, q, t)\mapsto (p, q, t)$ be the identity map from the Heisenberg group to $(\mathbb{R}^3, +)$. Equip both groups with the gradation corresponding to the non-isotropic dilations $\delta_\eps(p, q, t)=(\eps p, \eps q, \eps^2 t).$ One computes, for $x=(p_1, q_1, t_1)$ and  $y=(p_2, q_2, t_2)$,
\begin{align*}
\delta_\eps^{-1}\left(\Phi(x)^{-1}\Phi(x\delta_\eps y)\right)
=\delta_{\eps}^{-1}\left(x\delta_\eps y-x\right)
=(p_2, q_2, t_2+\tfrac{1}{2\eps}(p_1q_2-p_2q_1))
\end{align*}
which converges as $\eps\to 0$ if and only if $p_1q_2=p_2q_1$. In particular, the identity map between $\mathbb{H}$ and $(\mathbb{R}^3, +)$ is Pansu differentiable at the origin but not in a neighborhood of the origin. 
\end{example}

The following property is crucial for our analysis and will be proved in  Section~\ref{sec:FiltPres} below. It states that for smooth functions, the notion of  uniform Pansu differentiability on an open set coincides with preserving filtration above this set:

\begin{theorem}
\label{thm_PDiffFP}
Let $\Phi$ be a smooth function from an open set $U$ of $G$ to $H$.
The map  $\Phi$ is uniformly Pansu differentiable on $U$ if and only if $\Phi$ preserves the filtration at every point $x\in U.$
Besides, for such a map $\Phi$,
the Pansu derivative yields a smooth function  $(x,z)\mapsto \PD_x\Phi(z)$ on $U\times G$. 
\end{theorem}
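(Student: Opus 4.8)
The plan is to work entirely through the "blow-up" family $\Phi^{(\eps)}_x(z) := \delta_{\eps^{-1}}(\Phi_x(\delta_\eps z))$ and to understand its limit using the Taylor expansion of $\Phi_x$ near the identity of $G$. First I would fix coordinates adapted to the gradation on both $G$ and $H$, so that a point of $G$ is a vector $z=(z_1,\dots,z_n)$ with $z_j$ carrying dilation weight $\upsilon_j$, and similarly on $H$. Writing $\Psi := \Phi_x = \Phi(x)^{-1}\Phi(x\,\cdot\,)$, which is smooth near $0$ with $\Psi(0)=0$, I would expand each component $\Psi^{(k)}$ of $\Psi$ (where the $k$-th coordinate of $H$ has weight $w_k$) in its Taylor series at $0$. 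Applying $\delta_{\eps^{-1}}$ multiplies the $k$-th component by $\eps^{-w_k}$, while $\delta_\eps z$ replaces a monomial $z^\alpha$ by $\eps^{|\alpha|_\upsilon} z^\alpha$ where $|\alpha|_\upsilon = \sum_j \upsilon_j \alpha_j$ is the homogeneous weight of the multi-index. Hence $\Phi^{(\eps),k}_x(z)$ is a sum of terms $\eps^{|\alpha|_\upsilon - w_k}(\text{const})\,z^\alpha$ plus a controlled remainder. Convergence as $\eps\to 0$ (locally uniformly in $(x,z)$, together with smoothness in $x$ of all relevant coefficients) is then exactly the statement that every Taylor coefficient of $\Psi^{(k)}$ attached to a multi-index with $|\alpha|_\upsilon < w_k$ vanishes, and the limit $\PD_x\Phi$ is the homogeneous-degree-$w_k$ part, i.e. the sum of the $|\alpha|_\upsilon = w_k$ monomials. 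This makes the limit, when it exists, automatically a polynomial map homogeneous with respect to the dilations, and its smooth dependence on $x$ is inherited from the smoothness of the Taylor coefficients of $\Psi$ in $x$ (via smooth dependence of $\Phi$ and of the group operations).

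Next I would translate the vanishing condition "$|\alpha|_\upsilon < w_k \Rightarrow$ coefficient $=0$" into a statement about $\mathfrak d_x\Phi$. The key observation is that, writing $\Psi(\exp_G(tV))$ and differentiating at $t=0$, one recovers $\mathfrak d_x\Phi(V)$ as in \eqref{eq:tgoes0}; more generally, feeding into $\Psi$ the one-parameter subgroup through a fixed $V\in\mathfrak g_j$ probes precisely the monomials of $\Psi$ that are linear in the weight-$j$ coordinates (and constant in the others), to leading order $t^j$. A short computation then shows: the first-order term $\mathfrak d_x\Phi(\mathfrak g_j)$ lives in $\mathfrak h_1\oplus\cdots\oplus\mathfrak h_j$ if and only if all Taylor monomials $z_\ell\mapsto$ (weight-$j$ variable, linearly) with target component of weight $w_k>j$ vanish. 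Iterating this bookkeeping over all strata, and using that $\Psi$ is a composition involving left translations on $H$ — which are polynomial and whose differential at $0$ is $\tau^H$ — one gets that "$\Phi$ preserves the filtration at $x$" is equivalent to the vanishing of all low-weight monomials of $\Psi$ in the sense above, i.e. to the existence of the Pansu limit at $x$. For the uniform/local-uniform statement I would note that the remainder in the finite Taylor expansion of $\Psi$ is estimated by derivatives of $\Psi$ of a fixed order on a compact neighbourhood of $x$, which are bounded by smoothness of $\Phi$; combined with the boundedness of $z$ on a compact set this gives the locally uniform convergence of part (2) of Definition~\ref{def:pansu}, and differentiation under the limit (again justified by local boundedness of the $x$-derivatives of the Taylor coefficients) yields smoothness of $(x,z)\mapsto\PD_x\Phi(z)$.

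For the converse direction — uniform Pansu differentiability implies filtration preserving — the analysis above already does the work pointwise, since the existence of the limit in \eqref{eq_lim_defPD} at $x$ forces the offending monomials to vanish and hence forces $\mathfrak d_x\Phi(\mathfrak g_j)\subset\mathfrak h_1\oplus\cdots\oplus\mathfrak h_j$; here I only need Pansu differentiability at each point, not the uniformity, so this implication is in fact slightly stronger. The forward direction — filtration preserving at every point implies \emph{uniform} Pansu differentiability — is where the uniformity hypothesis is genuinely used, via the remainder estimate on compacts described above.

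I expect the main obstacle to be the bookkeeping that matches "$\mathfrak d_x\Phi$ preserves the filtration at $x$" with "all sub-critical-weight Taylor monomials of $\Phi_x$ vanish": a priori $\mathfrak d_x\Phi$ only controls the \emph{linear} part of $\Phi_x$ (in a weighted sense), whereas the Pansu limit is sensitive to all monomials of weight $<w_k$, including genuinely nonlinear ones such as a product of two weight-$1$ variables appearing in a weight-$1$ target coordinate (exactly the phenomenon in the Heisenberg Example above, where $p_1q_2-p_2q_1$ obstructs convergence). The resolution is that such a nonlinear low-weight monomial in $\Phi_x$, when one lets $x$ vary, produces a \emph{linear} low-weight monomial in $\Phi_{x'}$ for nearby $x'$ after re-expanding around $x'$ — more precisely, differentiating the identity $\Phi_{x}(y)=\Phi(x)^{-1}\Phi(xy)$ in $x$ trades a higher-order monomial for a lower-order one — so the hypothesis that $\Phi$ preserves the filtration \emph{at every point} $x\in U$ (not just at one point) is exactly what kills all the nonlinear obstructions simultaneously. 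Making this differentiation-in-$x$ argument precise, and organising the induction on the gradation step, is the technical heart of the proof; everything else is Taylor's theorem with remainder on a compact set plus the homogeneity bookkeeping for dilations.
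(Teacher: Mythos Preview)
Your Taylor-expansion framework is sound for the easy implication (Pansu at $x$ $\Rightarrow$ filtration at $x$, as in the paper's Lemma~\ref{lem_PD_filtration}), but the second paragraph contains a false claim: filtration preservation at a \emph{single} point $x$ is not equivalent to the vanishing of all sub-critical monomials of $\Phi_x$, hence not equivalent to Pansu differentiability at $x$. For a counterexample take $G=H=\mathbb R^2$ abelian with weights $1$ and $3$ and $\Phi(x,w)=(x,w+x^2)$: then $\mathfrak d_0\Phi=\mathrm{id}$ preserves the filtration, yet $\delta_{\eps^{-1}}\Phi_0(\delta_\eps y)=(y_1,\,y_2+\eps^{-1}y_1^2)$ diverges. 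You effectively retract this in the last paragraph, so the error is one of internal consistency; but it means paragraph two does not establish the equivalence it announces, and the argument must be reorganised.

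For the hard direction your proposed mechanism---``differentiating $\Phi_x(y)=\Phi(x)^{-1}\Phi(xy)$ in $x$ trades a higher-order monomial for a lower-order one, so filtration at every point kills the nonlinear obstructions''---is correct in the abelian case (there $\mathfrak d_{x'}\Phi=D_{x'}\Phi$, and $\partial_{z_\ell}\Phi^{(k)}\equiv 0$ for $\upsilon_\ell<w_k$ on all of $U$ forces $\Phi^{(k)}$ to depend only on coordinates of weight $\geq w_k$), but it is not a proof in general. In a non-abelian group $\mathfrak d_{x'}\Phi=(\tau^H_{\Phi(x')})^{-1}\,D_{x'}\Phi\,\tau^G_{x'}$, and the left-translation differentials $\tau$ carry their own weight structure via BCH; passing from ``$\mathfrak d_{x'}\Phi$ filtration-preserving for all $x'$'' to ``no sub-critical monomials in $\Phi_x$'' therefore requires unwinding BCH, and your sketch gives no indication how to organise that induction. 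The paper carries out precisely this unwinding, but along one-parameter subgroups rather than in the full multivariable expansion: Proposition~\ref{prop_MPD} reduces uniform Pansu differentiability to the existence of the limits $\eps^{-i}\pr_{\mathfrak h,i}\ln_H\Phi_x(\exp_G(\eps^j V))$ for $V\in\mathfrak g_j$ (using exponential coordinates of the second kind together with the Dynkin product), and Lemma~\ref{lem_thm_PDiffFP} then expresses $\partial_t^{k}\!\ln_H\Phi_x(\exp_G(tV))|_{t=0}$ as a combination of iterated brackets of $\mathfrak d_{x\exp_G(tV)}\Phi(V)$ and its $t$-derivatives. Filtration preservation \emph{along the whole curve} $t\mapsto x\exp_G(tV)$ confines these to $\mathfrak h_1\oplus\cdots\oplus\mathfrak h_j$, and the bracket structure then forces $\partial_t^k|_{t=0}\ln_H\Phi_x(\exp_G(tV))\in\mathfrak h_1\oplus\cdots\oplus\mathfrak h_{(k-1)j}$ for $k\geq 2$, from which a Taylor expansion in $t$ to the appropriate order yields the limits. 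This is your ``differentiate in the base point'' idea made precise; the reduction to one-parameter subgroups is what makes the BCH bookkeeping tractable.
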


This result has been proved in~\cite{war} in the case of  Carnot groups. The filtration preserving maps then are contact maps. The Heisenberg group is a prime example of Warhust's result~\cite{war}.
One contribution of this paper is to extend the result to the case of graded  groups with a proof `from first principle'.  
After the publication of the present paper to Journal of Geometric Analysis, it was pointed out to us that this analysis had been performed in greater generality on filtered manifolds in 
\cite[Section 7]{Choi+Ponge}.

\smallskip 

Our proof of Theorem \ref{thm_PDiffFP} will also yields the following properties:

\begin{corollary}\label{cor:iso}
We continue with the notation and setting of Theorem \ref{thm_PDiffFP}.
Let $\Phi$ be a smooth function from an open set $U$ of $G$ to $H$ which is 
uniformly Pansu differentiable on $U$. 
\begin{itemize}
    \item For each $x\in U$, $f\circ \PD_x \Phi\in\mathcal C^\infty(G)$ (resp. $\mathcal S(G)$) if $f\in \mathcal C^\infty(H)$ (resp. $\mathcal S(H)$). The map  
$$
f\longmapsto (x\mapsto f\circ \PD_x\Phi)
$$
yields continuous morphisms of topological vector spaces from $\mathcal C^\infty(H)$ to $\mathcal C^\infty(U,\mathcal C^\infty(G))$
and also from~$\mathcal S(H)$ to $\mathcal C^\infty(U,\mathcal S(G))$.
\item If $\Phi$ is a smooth diffeomorphism from $U$ onto its image, 
then $G$ and~$H$ are isomorphic.
\end{itemize}
\end{corollary}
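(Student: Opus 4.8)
The plan is to read off both items from the properties of the Pansu derivative established in (the proof of) Theorem~\ref{thm_PDiffFP} and in the discussion preceding it: for a uniformly Pansu differentiable $\Phi$, the map $(x,z)\mapsto\PD_x\Phi(z)$ is smooth on $U\times G$, each $\PD_x\Phi\colon G\to H$ is a group morphism commuting with the dilations, and $\PD_x\Phi$ is a group isomorphism when $\Phi$ is a diffeomorphism. I would work throughout in the exponential coordinates that serve to define $\mathcal C^\infty(G),\mathcal S(G)$ and their analogues on $H$. In these coordinates a group morphism between two connected simply connected nilpotent Lie groups is \emph{linear}; combined with the dilation-equivariance this shows that $\PD_x\Phi$ is represented by a graded linear map $L(x)\colon\mathfrak g\to\mathfrak h$ (i.e.\ $L(x)\mathfrak g_j\subset\mathfrak h_j$ for all $j$), whose matrix entries depend smoothly on $x$ by Theorem~\ref{thm_PDiffFP}.

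For the first item, fix $f\in\mathcal C^\infty(H)$. Then $z\mapsto f(\PD_x\Phi(z))=f(L(x)z)$ is smooth on $G$ and, jointly, $(x,z)\mapsto f(L(x)z)$ is smooth on $U\times G$; under the identification of $\mathcal C^\infty(U,\mathcal C^\infty(G))$ with $\mathcal C^\infty(U\times G)$ this already gives the map $f\mapsto(x\mapsto f\circ\PD_x\Phi)$ with values in the right space. Continuity is a seminorm estimate: by iterating the chain rule, any mixed $(x,z)$-derivative of $f(L(x)z)$ of order $\le N$ is a finite sum of terms $(\partial^\gamma f)(L(x)z)$ with $|\gamma|\le N$, multiplied by polynomials in the entries of $L(x)$ and their $x$-derivatives of order $\le N$; on a compact $K\times K'\subset U\times G$ these factors are bounded and the set $\{L(x)z:(x,z)\in K\times K'\}$ is a compact subset of $H$, so the whole expression is dominated by a $\mathcal C^N$-seminorm of $f$ over a compact subset of $H$. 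For the Schwartz statement one needs, in addition, the decay, and here I would use that $L(x)$ is injective — which holds whenever $\PD_x\Phi$ is a group isomorphism, in particular in the change-of-variables situation $G=H$ relevant to the applications — so that $|L(x)z|\ge c_K|z|$ for $x$ in a compact $K$; feeding this lower bound together with the rapid decay of $f$ and its derivatives into the chain-rule expansion above shows that $f\circ\PD_x\Phi\in\mathcal S(G)$ with all Schwartz seminorms locally bounded in $x$, i.e.\ the map is continuous from $\mathcal S(H)$ into $\mathcal C^\infty(U,\mathcal S(G))$.

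For the second item, let $\Phi\colon U\to\Phi(U)$ be a smooth diffeomorphism and fix any $x\in U$. By what is established in Section~\ref{sec:FiltPres} (as part of the proof of Theorem~\ref{thm_PDiffFP}), $\PD_x\Phi\colon G\to H$ is then a group isomorphism; equivalently, $L(x)\colon\mathfrak g\to\mathfrak h$ is a graded linear isomorphism which, being the infinitesimal form of the group morphism $\PD_x\Phi$, is a Lie algebra morphism, hence a (graded) Lie algebra isomorphism. Since $G$ and $H$ are connected and simply connected, exponentiating exhibits them as isomorphic Lie groups — indeed $\PD_x\Phi$ is itself such an isomorphism.

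I expect the only genuine work to be the Schwartz-seminorm bookkeeping of the first item: one must combine the iterated chain rule with the smoothness and graded-linear structure of $L(x)$ and with the two-sided comparison $|L(x)z|\asymp|z|$, keeping all constants locally uniform in the base point $x$. The $\mathcal C^\infty$ assertions and the second item are then essentially formal consequences of Theorem~\ref{thm_PDiffFP} together with the morphism and isomorphism properties of the Pansu derivative.
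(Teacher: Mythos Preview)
Your approach is the paper's: both items are derived from the refinement Theorem~\ref{thm:LargeStatement}, with the first item coming from the smoothness in part~(ii)(1) and the second from part~(i) together with Remark~\ref{rem_rem:bound}. Your treatment of the $\mathcal C^\infty$ half of item~1 --- chain rule plus the graded-linear structure $L(x)=\pd_x\Phi$ from Theorem~\ref{thm:LargeStatement}(ii)(2), with entries depending smoothly on $x$ --- is considerably more explicit than the paper's one-line attribution, and correct.

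There is a genuine soft spot in your Schwartz argument, which you yourself half-flag: you need $L(x)$ injective to get the lower bound $|L(x)z|\gtrsim|z|$ locally uniformly in $x$, but the hypotheses of item~1 alone do not give it (if $\pd_x\Phi$ has a kernel, $f\circ\PD_x\Phi$ is constant along a direction and cannot be Schwartz unless zero). Injectivity is supplied only under the diffeomorphism hypothesis of item~2, so the Schwartz claim really rides on that case --- the paper does not isolate this point either. For item~2 you should name the precise input rather than citing Section~\ref{sec:FiltPres} generically: the paper's route is Remark~\ref{rem_rem:bound}, whose hypothesis is that \emph{both} $\Phi$ and $\Phi^{-1}$ are uniformly Pansu differentiable. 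One checks that $\mathfrak d_{\Phi(x)}\Phi^{-1}=(\mathfrak d_x\Phi)^{-1}$ is again filtration preserving and then applies Theorem~\ref{thm:LargeStatement}(i) to $\Phi^{-1}$; after that, Remark~\ref{rem_rem:bound} gives $(\PD_x\Phi)^{-1}=\PD_{\Phi(x)}(\Phi^{-1})$, so $\PD_x\Phi$ is the desired isomorphism and the injectivity you needed for the Schwartz part of item~1 comes for free in the diffeomorphism setting.
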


\subsection{The dual set and the semi-classical pseudodifferential calculus}

\subsubsection{The dual set}
Recall that a  {\it representation} $(\mathcal H_\pi, \, \pi)$ of $G$ is a pair consisting of a Hilbert space~$\mathcal H_\pi$  and a  group morphism~$\pi$ from~$G$ to the set of unitary transforms on $\mathcal H_\pi$.
In this paper, the representations will always be assumed (unitary) strongly continuous, and their Hilbert spaces separable. 
A representation is said to be {\it irreducible} if the only closed subspaces of $\mathcal H_\pi$ that are stable under~$\pi$ are $\{0\}$ and $\mathcal H_\pi$ itself. 
Two representations $\pi_1$ and $\pi_2$ are equivalent  if there exists a unitary transform $\mathbb U$ called an {\it intertwining map} that sends $\mathcal H_{\pi_1}$ on $\mathcal H_{\pi_2}$ with 
$$\pi_1=\mathbb U^{-1}\circ  \pi_2 \circ \mathbb U.$$ 
The {\it dual set} $\widehat G$ is obtained by taking the quotient of the set of irreducible representations by this equivalence relation.  We may still denote by $\pi$ the elements of $\widehat G$ and we keep in mind that different representations of the class are equivalent through intertwining operators. The following properties are straightforwards:

\begin{lemma}[Pullback of Irreps]\label{pullbackOfIrreps}
Let $\theta: G\to H$ be a continuous group homomorphism, and $(\pi, \mathcal{H}_\pi)$ be a representation of $H$. Then
\begin{itemize}
    \item $\pi \circ\theta$ is a representation of $G$,
    \item  $\pi\mapsto \pi\circ \theta$ preserves unitarity and the unitary equivalence class of $\pi$,
    \item $\pi\mapsto \pi\circ\theta$ preserves irreducibility if $\theta$ is surjective.
\end{itemize}
Hence if $\theta$ is surjective, we have a map
\begin{align*}
    \theta^*: \widehat{H}&\longrightarrow\widehat{G}\\
    [\pi]&\longmapsto [\pi\circ\theta].
\end{align*}
\end{lemma}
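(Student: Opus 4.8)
The plan is to verify the three bulleted claims in order, each being a direct check against the definitions fixed above, and then to assemble the well-definedness of the map $\theta^*$. For the first bullet, I would note that $\pi\circ\theta$ is a composition of group homomorphisms, hence a group homomorphism from $G$ to the unitary operators on $\mathcal{H}_\pi$, with each operator $\pi(\theta(g))$ unitary because $\pi$ is. Strong continuity is the one point deserving a word: for every $v\in\mathcal{H}_\pi$ the orbit map $g\mapsto \pi(\theta(g))v$ is the composition of the continuous map $\theta$ with the continuous orbit map $h\mapsto\pi(h)v$ of the strongly continuous representation $\pi$, hence continuous; this is why one argues with orbit maps rather than appealing naively to continuity of $\pi$ itself. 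Since $\mathcal{H}_{\pi\circ\theta}=\mathcal{H}_\pi$ is separable, $(\pi\circ\theta,\mathcal{H}_\pi)$ is a representation of $G$ in the sense used here.

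For the second bullet, the preservation of unitarity is already contained in the previous paragraph. For equivalence classes, if $\mathbb{U}$ is an intertwiner between two representations $\pi_1,\pi_2$ of $H$, so that $\pi_1=\mathbb{U}^{-1}\circ\pi_2\circ\mathbb{U}$, then composing on the right with $\theta$ gives $\pi_1\circ\theta=\mathbb{U}^{-1}\circ(\pi_2\circ\theta)\circ\mathbb{U}$; thus the \emph{same} $\mathbb{U}$ witnesses $\pi_1\circ\theta\sim\pi_2\circ\theta$, so the equivalence class is preserved. For the third bullet I would use surjectivity of $\theta$ to match invariant subspaces: a closed subspace $W\subseteq\mathcal{H}_\pi$ satisfies $\pi(\theta(g))W\subseteq W$ for all $g\in G$ if and only if $\pi(h)W\subseteq W$ for all $h\in H$ — the ``if'' direction is immediate since $\theta(g)\in H$, and the ``only if'' direction uses that every $h\in H$ is of the form $\theta(g)$. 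Hence the $(\pi\circ\theta)$-invariant closed subspaces of $\mathcal{H}_\pi$ are exactly the $\pi$-invariant ones, and irreducibility of $\pi$ transfers to $\pi\circ\theta$.

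Finally, combining the bullets: when $\theta$ is onto, the first and third bullets show $\pi\circ\theta$ is an irreducible representation of $G$, and the second bullet shows that its class $[\pi\circ\theta]$ depends only on $[\pi]$; therefore $[\pi]\mapsto[\pi\circ\theta]$ is a well-defined map $\widehat{H}\to\widehat{G}$.

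There is essentially no obstacle in this lemma — it is of a bookkeeping nature — but two points are worth flagging explicitly: strong continuity must be checked on orbit vectors as above, and surjectivity of $\theta$ is genuinely necessary for the irreducibility claim, since otherwise $\pi\circ\theta$ may retain only the closed $\pi(\overline{\theta(G)})$-invariant subspaces, which can be proper (the extreme case being $\theta$ constant, where $\pi\circ\theta$ is the trivial representation on $\mathcal{H}_\pi$ and hence reducible as soon as $\dim\mathcal{H}_\pi>1$).
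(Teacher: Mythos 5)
Your proof is correct and complete; the paper itself gives no proof of this lemma, merely introducing it with the remark that the properties are ``straightforward,'' and your write-up supplies precisely the routine verifications (homomorphism property, strong continuity via orbit maps, transport of intertwiners, and matching of invariant subspaces under surjectivity) that justify that remark. Your closing caveats about strong continuity and the necessity of surjectivity are sensible and consistent with the conventions fixed in the paper.
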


In particular, any automorphism of $G$ induces an automorphism of $\widehat{G}$, and hence any subgroup of $\text{Aut}(G)$
acts on  $\widehat{G}$.
For instance, the dilations $\delta_r$, $r>0$, on a graded Lie group $G$ provide an action of $\mathbb{R}^+$ 
on $\widehat G$ via
\begin{equation}
\label{eq_def_rpi}
\delta_r \pi (x)
=
\pi(rx),\quad x\in G,\
\pi\in \widehat G, \ r>0.
\end{equation}

\subsubsection{The Fourier transform}
\label{subsubsecFG}
The {\it Fourier transform} of an integrable function $f\in L^1(G)$ at a representation $\pi$ of $G$ is the operator acting on $\mathcal H_\pi$
 via 
 $$
 \mathcal F_G(f)(\pi) =\int_G  f(z)\, (\pi(z))^*\, dz.
 $$
 Note that $\mathcal F_G(f)(\pi)\in \mathcal L(\mathcal H_\pi)$; in this paper, we denote by $\mathcal L(\mathcal H)$ the Banach space of bounded operator on the Hilbert space $\mathcal H$. 
  Note also that if $\pi_1,\pi_2$ are two equivalent representations of $G$ with $\pi_1=\mathbb U^{-1}\circ  \pi_2 \circ \mathbb U$ for some intertwining operator $\mathbb U$, then 
$$\mathcal F_G(f)(\pi_1) =\mathbb U^{-1}\circ \mathcal F_G(f)(\pi_2)\circ \mathbb U .
$$
Hence, this defines the measurable field of operators $\{{\mathcal F}_G(f)(\pi), \pi\in \Ghat \}$ modulo equivalence. 
Here, the unitary dual $\Ghat$ is equipped with its natural Borel structure, 
and the equivalence comes from quotienting the set of irreducible representations of $G$ together with understanding the resulting fields of operators modulo intertwiners.

The {\it Plancherel measure} is the unique positive Borel measure $\mu$ 
on $\widehat G$ such that 
for any $f\in {\mathcal C}_c(G)$, we have the Plancherel formula
\begin{equation}
\label{eq_plancherel_formula}
\int_G |f(x)|^2 dx = \int_{\widehat G} \|\mathcal F_G(f)(\pi)\|_{HS(\mathcal H_\pi)}^2 d\mu(\pi).
\end{equation}
Here $\|\cdot\|_{HS(\mathcal H_\pi)}$ denotes the Hilbert-Schmidt norm on $\mathcal H_\pi$.
This implies that the group Fourier transform extends unitarily from 
$L^1(G)\cap L^2(G)$ to $L^2(G)$ onto 
$L^2(\widehat G):=\int_{\widehat G} \mathcal H_\pi \otimes\mathcal H_\pi^* d\mu(\pi)$
which we identify with the space of $\mu$-square integrable fields on $\widehat G$.

\smallskip 

The Plancherel formula also  yields  an {\it inversion formula} for any~$ f \in {\mathcal S}(G)$ and~$x\in G$: 
\begin{equation}
\label{inversionformula} f(x)
=  \int_{\widehat G} {\rm{Tr}}_{\mathcal H_\pi} \, \Big(\pi(x) {\mathcal F}_Gf(\pi)  \Big)\, d\mu(\pi) \,,
\end{equation}
where ${\rm Tr}_{\mathcal H_\pi}$ denotes the trace of operators in ${\mathcal L}(\mathcal H_\pi)$, the set of bounded linear operators on $\mathcal{H}_\pi$.
This   formula makes sense since, for $f \in {\mathcal S}(G)$, the operators ${\mathcal F}f(\pi)$ are trace-class and $\int_{\widehat G} {\rm{Tr}}_{\mathcal H_\pi} \, \Big| {\mathcal F}f(\pi)  \Big|d\mu(\pi)$ is finite.

\subsubsection{Semiclassical analysis} \label{subsubsec:sc}

For an open set $U\subset G$, we denote by $\mathcal A_0(U\times \widehat{G})$ the space of {\it symbols}
$\sigma = \{\sigma(x,\pi) : (x,\pi)\in U\times \widehat G\}$ of the form 
$$
\sigma(x,\pi)=\mathcal F_G \kappa_x (\pi) = \int_G \kappa_x(y) (\pi(x))^* dy, 
$$
where $\kappa\in \mathcal C_c^\infty(U,\mathcal S( G))$; that is,  the map $x\mapsto \kappa_x$ is  compactly supported on~$U$ and valued in the set of Schwartz class functions $S(G)$ with Schwartz norms depending smoothly on $x\in U$. We call $x\mapsto \kappa_x$ the \emph{convolution kernel} of $\sigma$. 

As the Fourier transform is injective, it yields a one-to-one correspondence between $\mathcal A_0(U\times\widehat{G})$ and  $\mathcal C_c^\infty(U,\mathcal S( G))$. In this way,   $\mathcal A_0(U\times\widehat{G})$ inherits from  $\mathcal C_c^\infty(U,\mathcal S( G))$ the structures of topological vector space and smooth compactly supported section of the Schwartz bundle over $U$.
These structures are easier to grasp than being  a set of measurable fields of operators on $U\times \Ghat$ modulo intertwiners as  described in Section \ref{subsubsecFG}.
The results of this paper develop a deeper geometric interpretation of the space of symbols, here  $\mathcal A_0(U\times \Ghat)$, in terms of densities; this will be discussed in Section~\ref{sec:app}.

\smallskip

With the symbol $\sigma\in\mathcal A_0(U\times \Ghat)$, we associate the (family of) {\it semi-classical pseudodifferential operators} ${\rm Op}_\eps(\sigma)$, $ \eps\in (0,1]$, defined via 
$$
{\rm Op}_\eps(\sigma) f(x) = \int_{\pi\in\widehat G} {\rm Tr}_{{\mathcal H}_\pi} \left(  \pi(x) \sigma(x,\delta_\eps\pi) {\mathcal F}_G f(\pi)  \right)d\mu(\pi),\;\;
f\in \mathcal S(G), \, x\in U, 
$$
or equivalently, in terms of the convolution kernel $\kappa_x =\mathcal F^{-1}_G \sigma(x,\cdot)$,
$$
{\rm Op}_\eps(\sigma) f(x) = f* \kappa^{(\eps)}_x (x),\quad 
f\in \mathcal S(G), \, x\in U,
$$
where $\kappa^{(\eps)}_x$ is the following rescaling of the convolution kernel:
$$
\kappa^{(\eps)}_x (y) := \eps^{-Q} \kappa_x(\delta_\eps ^{-1} y).
$$
This second expression for ${\rm Op}_\eps(\sigma)$ explains the choice of vocabulary for the convolution kernel of a symbol. 

The rescaled convolution kernel is different from the integral kernel of ${\rm Op}_\eps(\sigma)$, which is given by:
$$
(x,y)\mapsto \eps^{-Q} \kappa_x (\delta_\eps^{-1} (y^{-1} x)).
$$
We emphasize the convolution kernel over the integral kernel, although the latter is  used in the pioneer work~\cite{VanErp} on pseudodifferential theory on filtered manifolds (see also~\cite{VeY1,VeY2}).
The reason is that our approach here is different and aims to be symbolic. It is inspired from semi-classical analysis in Euclidean setting and based on the fact that the symbols obtained via the Fourier transforms of convolution kernels  proved a flexible tool for  dealing with applications (see~\cite{FL,FF2}).

\smallskip 

Our semi-classical pseudodifferential theory may be used to analyse the oscillations of  families~$(u^\eps)_{\eps>0}$  that are bounded in $L^2(G)$. One considers the functionals $\ell_\eps$ defined on $\mathcal A_0(U\times \widehat G)$ by 
$$\ell_\eps(\sigma)= \left({\rm Op}_\eps (\sigma) u^\eps,u^\eps\right)_{L^2(G)} ,\;\;\sigma\in \mathcal A_0(U\times \widehat G).$$
The limit points of $\ell_\eps$
 as $\eps$ goes to $0$ have some structures. When the family $(u^\eps)_{\eps>0} $ is $L^2$-normalized and after possibly further extraction of subsequences, the limit points of $\ell_\eps$ define states of the $C^*$-algebra $\mathcal A(U\times \widehat G)$ obtained by completion of $\mathcal A_0(U\times \widehat G)$ for the norm
 $$\| \sigma\|_{L^\infty (U\times \Ghat)} :=\sup_{(x,\pi)\in U\times \widehat G} \| \sigma(x,\pi)\|_{\mathcal L(\mathcal H_\pi)}.$$
For describing the structure of these limit points, 
we consider the set of pairs $(\gamma,\Gamma)$ where $\gamma$ is a positive Radon measure on~$U\times \widehat G$ 
	and 
	$$\Gamma=\{\Gamma(x,\pi)\in {\mathcal L}({\mathcal H}_\pi): \; (x,\pi)\in U\times \widehat G\}$$
	 is a positive $\gamma$-measurable field of trace-class operators satisfying 
	$$\int_{U\times \widehat G}  {\rm Tr}_{\mathcal H_\pi}\left( \Gamma(x,\pi)\right) d\gamma(x,\pi)<+\infty. $$
This set is then equipped with the  equivalence relation: 
$(\gamma,\Gamma) \sim( \gamma',\Gamma')$  if there  exists a measurable function $f:U\times \widehat G\to \mathbb C\setminus\{0\}$ such that 
$$
d\gamma' =f  d\gamma\;\;{\rm  and} \;\;\Gamma'= {f}^{-1} \Gamma
$$ 
for $\gamma$-almost every $(x,\pi)\in U\times \widehat G$.
The equivalence class of $(\gamma,\Gamma)$ is denoted by 
$\Gamma d\gamma$, it is called a positive vector-valued measure. 
We denote  by $\mathcal M_{ov}^+(U\times \widehat G) $ the set  of these equivalence classes.
 The positive continuous linear functionals of the $C^*$-algebra
$\mathcal A(U\times \widehat G)$ is naturally identified with $\mathcal M_{ov}^+(U\times \widehat G) $.

\smallskip 

If  $(u^\eps)_{\eps>0}$ is a bounded family of $L^2(U)$, 
there exist a sequence $(\eps_k)_{k\in \mathbb N}$ in $(0,+\infty)$ with  $\eps_k\Tend{k}{+\infty}0$
and a  pair $\Gamma d\gamma \in{\mathcal M}_{ov}^+(U\times \widehat G)$
 such that we have
$$
\forall \sigma\in \mathcal A_0(U\times \widehat G),\;\; 
\left({\rm Op}_{\eps_k} (\sigma) u^{\eps_k},u^{\eps_k}\right)_{L^2(G)}\Tend {k}{+\infty} \int_{G\times \widehat G} {\rm Tr}_{\mathcal H_\pi}\left(\sigma(x,\pi) \Gamma(x,\pi)\right)d\gamma(x,\pi).$$
Given the sequence $(\eps_k)_{k\in \mathbb N}$,  one has 
$$\int_{U\times \widehat G} {\rm Tr}_{\mathcal H_\pi}\left(\Gamma(x,\pi)\right)d\gamma(x,\pi) \leq \limsup_{\eps>0} \|u^\eps\|_{L^2(U)}^2.$$
The positive vector-valued measure $\Gamma d\gamma$ is called a {\it semi-classical measure} of the family $(u^\eps)$ for the sequence $\eps_k$.

\smallskip 

Our aim is to analyze how these notions can be transferred from (an open subset of) a graded group $G$ to another one $H$ via a smooth local diffeomorphism that preserves the filtration of the group.

\subsection{Main result}

Let us consider two graded groups, $G$ and $H$, and    a smooth diffeomorphism, $\Phi$, from an open set $U$ of $G$ to $H$ that is filtration preserving and thus uniformly Pansu differentiable by Theorem~\ref{thm_PDiffFP}. 

We denote by $J_\Phi$ the Jacobian of $\Phi$ with respect to Haar measures chosen on $G$ and $H$, and we consider the operator $\PullTwo$  which associates to a function~$f$ defined on $\Phi(U)\subset{H}$ the function 
\begin{equation}\label{eq:PullTwo}
\PullTwo(f) :=  J_\Phi^{1/2} f\circ \Phi
\end{equation}
 defined on  $U\subset G$.  
The map $\PullTwo$ is a unitary operator from $L^2(\Phi(U))$ onto $L^2(U)$.

\smallskip 

Conjugation by $\PullTwo$ pulls back any $S\in \mathcal L (L^2(\Phi(U)))$ to an operator 
$$
\PullTwo \circ S \circ \PullTwo^{-1} \in \mathcal L(L^2(U)).
$$
We want to study the case of $S={\rm Op}_\eps (\sigma)$, $\sigma \in\mathcal A_0(\Phi(U)\times\Hhat)$. 

\smallskip 

The map  $\Phi$ also induces a transformation $\PullOne$ between convolution kernels. 
Indeed, for a function $\kappa:x\mapsto \kappa_x$ in $\mathcal C_c^\infty(\Phi(U), \mathcal S(H))$,
we associate the function $\PullOne \kappa$ defined via
\begin{equation}\label{def:map_kernel}
(\PullOne \kappa)_x(z) = J_\Phi(x)  \kappa_{\Phi(x)} ( \PD_x\Phi(z)),\;\;\forall (x,z)\in U\times G.
\end{equation}
By Corollary \ref{cor:iso}, $\PullOne \kappa$ is  in $\mathcal C_c^\infty(U, \mathcal S(G))$ and $\PullOne$ is an isomorphism of topological vector spaces from  
$\mathcal C_c^\infty(\Phi(U), \mathcal S(H))$ onto $\mathcal C_c^\infty(U, \mathcal S(G))$. 
The geometric meaning of this map is discussed in Section~\ref{sec:app}.

\smallskip 

By Lemma \ref{pullbackOfIrreps}, the map 
\begin{equation}\label{unitaryDualMap}
    \widehat{\mathbb{G}}\Phi: 
    \left\{
    \begin{array}{rcl}
    U\times\widehat{G}&\longrightarrow& \Phi(U)\times\widehat{H}\\
    (x, \pi)&\longmapsto& \left(\Phi(x), \pi\circ (\PD_x\Phi)^{-1}\right)
\end{array}\right.\quad, 
\end{equation}
is well-defined. 
Moreover, it induces  an isomorphism of topological vector spaces
$$
(\widehat{\mathbb{G}}\Phi)^*: \mathcal{A}_0(\Phi(U)\times\Hhat)\rightarrow\mathcal{A}_0(U\times\Ghat),
$$ 
defined in the following way:
for $\sigma\in \mathcal A_0(\Phi(U)\times\Hhat)$, 
the symbol $(\widehat{\mathbb{G}}\Phi)^*\sigma$ given by 
\begin{align}\label{unitaryPullBack}
 (\widehat{\mathbb{G}}\Phi)^*\sigma(x, \pi):=\sigma\left(\Phi(x), \pi\circ\PD_x\Phi^{-1}\right),  \quad \, (x,\pi)\in U\times\Ghat,
\end{align}
 is in  $\mathcal A_0(U\times \Ghat)$. Indeed, its convolution kernel is 
$\PullOne\kappa$ when  $\kappa:x\mapsto \kappa_x$ denotes the convolution kernel of~$\sigma$.

\begin{theorem}\label{thm:inv}
Let $G$ and $H$ be two graded groups, and  $\Phi$  a smooth diffeomorphism from an open set $U$ of $G$ to $H$.
Assume that $\Phi$ is filtration preserving on~$U$ (and thus uniformly Pansu differentiable on~$U$). Let~$\sigma\in\mathcal A_0(\Phi(U)\times\Hhat)$, then 
in~$\mathcal L(L^2(U))$,
$$
\PullTwo\circ  {\rm Op}_\eps (\sigma)\circ \PullTwo^{-1} ={\rm Op}_\eps\left((\widehat{\mathbb{G}}\Phi)^* \sigma \right) +O(\eps).
$$  
\end{theorem}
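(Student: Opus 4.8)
The plan is to work entirely on the convolution-kernel side, where the map $\PullOne$ is already understood, and to compare $\PullTwo\circ{\rm Op}_\eps(\sigma)\circ\PullTwo^{-1}$ with ${\rm Op}_\eps(\PullOne\kappa\,)$ directly as integral operators on $L^2(U)$, estimating the difference of their kernels. Write $S={\rm Op}_\eps(\sigma)$ with convolution kernel $y\mapsto\kappa_y$ and integral kernel $K_\eps(w,w')=\eps^{-Q_H}\kappa_w(\delta^H_{\eps^{-1}}(w'^{-1}w))$ on $\Phi(U)\times\Phi(U)$. Conjugating by $\PullTwo(f)=J_\Phi^{1/2}f\circ\Phi$ gives an operator on $L^2(U)$ with integral kernel
$$
\widetilde K_\eps(x,x')=J_\Phi(x)^{1/2}J_\Phi(x')^{1/2}\,\eps^{-Q_H}\,\kappa_{\Phi(x)}\bigl(\delta^H_{\eps^{-1}}(\Phi(x')^{-1}\Phi(x))\bigr),
$$
using that the change of variables Jacobian of $\Phi$ on the $x'$ integration is $J_\Phi(x')$. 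On the other side, ${\rm Op}_\eps(\PullOne\kappa)$ has integral kernel $\eps^{-Q_G}(\PullOne\kappa)_x(\delta^G_{\eps^{-1}}(x'^{-1}x))=\eps^{-Q_G}J_\Phi(x)\kappa_{\Phi(x)}\bigl(\PD_x\Phi(\delta^G_{\eps^{-1}}(x'^{-1}x))\bigr)$. Since $\PD_x\Phi$ is a graded automorphism $G\to H$, it commutes with dilations and $Q_G=Q_H$, so this equals $J_\Phi(x)\eps^{-Q_H}\kappa_{\Phi(x)}\bigl(\delta^H_{\eps^{-1}}(\PD_x\Phi(x'^{-1}x))\bigr)$.

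\textbf{Key steps.} First I would reduce, by a partition of unity and the fact that $\PullOne\kappa\in\mathcal C_c^\infty(U,\mathcal S(G))$, to the situation where $x$ ranges over a small compact neighborhood of a fixed point $x_0$; the relevant integral operators are then uniformly Hilbert--Schmidt (their kernels are rapidly decaying in the $\eps^{-1}$-rescaled variable, uniformly in $\eps$), so it suffices to bound the difference kernel in $L^2(U\times U)$ by $O(\eps)$. Second, the core estimate: one must compare, inside the Schwartz function $\kappa_{\Phi(x)}$, the two arguments
$$
A_\eps:=\delta^H_{\eps^{-1}}\bigl(\Phi(x')^{-1}\Phi(x)\bigr)=\delta^H_{\eps^{-1}}\bigl(\Phi_x(\delta^G_\eps\delta^G_{\eps^{-1}}(x'^{-1}x))^{-1}\bigr)
\quad\text{and}\quad
B_\eps:=\delta^H_{\eps^{-1}}\bigl(\PD_x\Phi(x'^{-1}x)\bigr)=\PD_x\Phi\bigl(\delta^G_{\eps^{-1}}(x'^{-1}x)\bigr),
$$
together with comparing $J_\Phi(x')^{1/2}$ with $J_\Phi(x)^{1/2}$. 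Setting $z:=\delta^G_{\eps^{-1}}(x'^{-1}x)$ as the new variable, $B_\eps=\PD_x\Phi(z)$ and $A_\eps=\delta^H_{\eps^{-1}}\bigl(\Phi_x(\delta^G_\eps z)\bigr)^{-1}$-type expression; the uniform Pansu differentiability of Theorem~\ref{thm_PDiffFP} gives $\delta^H_{\eps^{-1}}(\Phi_x(\delta^G_\eps z))\to\PD_x\Phi(z)$ locally uniformly, but for an $O(\eps)$ (not merely $o(1)$) error I need the quantitative version: a Taylor expansion of $\Phi_x\circ\exp_G$ in $\mathfrak g$, reorganized by homogeneity, showing that the next term after $\PD_x\Phi(z)$ in the expansion of $\delta^H_{\eps^{-1}}(\Phi_x(\delta^G_\eps z))$ is $O(\eps)$ with all constants controlled locally uniformly in $x$ and polynomially in $z$ (with the same smooth dependence for $J_\Phi$). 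This is where the smoothness hypothesis and the filtration-preserving property are really used — the filtration condition is exactly what makes the $\eps^{-1}$-rescaling of the error terms bounded rather than blowing up.

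\textbf{Closing the estimate.} Granting the expansion $A_\eps=\PD_x\Phi(z)+\eps\,R_\eps(x,x',z)$ and $J_\Phi(x')=J_\Phi(x)+O(\eps|z|_G^M)$ with $R_\eps$ and the remainder Schwartz-controlled in $z$ uniformly on the compact set, I would write the kernel difference as $\eps^{-Q}J_\Phi(x)[\kappa_{\Phi(x)}(A_\eps)-\kappa_{\Phi(x)}(B_\eps)]$ plus the Jacobian-mismatch term, apply the mean value theorem to $\kappa_{\Phi(x)}$ along the segment between $A_\eps$ and $B_\eps$ (using that $\kappa$ is smooth and Schwartz with $x$-dependence in $\mathcal C_c^\infty$), and use the rapid decay of $\nabla\kappa_{\Phi(x)}$ to absorb all polynomial-in-$z$ factors. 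Changing back to $(x,x')$ variables contributes $\eps^Q$ from $dz=\eps^Q\,dx'$, which cancels $\eps^{-Q}$, leaving a kernel whose $L^2(U\times U)$ norm is $O(\eps)$; hence the operator-norm difference is $O(\eps)$, and since $\PullOne\kappa$ is the convolution kernel of $(\widehat{\mathbb G}\Phi)^*\sigma$ (noted in the excerpt), this is exactly the claim. I expect the genuine obstacle to be the second step: producing the quantitative first-order Pansu expansion with uniform-in-$x$, Schwartz-in-$z$ remainder bounds, since the naive BCH expansion of $\Phi_x$ mixes homogeneities and one must check that the filtration-preserving hypothesis kills precisely the terms that would otherwise produce negative powers of $\eps$ after rescaling — the rest is bookkeeping with Hilbert--Schmidt norms and change of variables.
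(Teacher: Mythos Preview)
Your overall strategy matches the paper's: work on the kernel side, compare $\delta^H_{\eps^{-1}}(\Phi(x')^{-1}\Phi(x))$ with $\PD_x\Phi(z)$ via a quantitative first-order Pansu expansion, and treat the Jacobian mismatch separately by a Taylor estimate. You also correctly identify the quantitative expansion as the crux; the paper supplies it as Theorem~\ref{thm:LargeStatement}(ii)(3) together with Corollary~\ref{cor:LS}, which gives precisely the remainder control you want (locally uniform in $x$, polynomially bounded in $|z|_G$).

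There is, however, a genuine gap in your endgame. You assert that the operators are \emph{uniformly Hilbert--Schmidt} and that it therefore suffices to bound the difference kernel in $L^2(U\times U)$ by $O(\eps)$. This is false: the integral kernel of ${\rm Op}_\eps(\sigma)$ is $\eps^{-Q}\kappa_x(\delta_{\eps^{-1}}(y^{-1}x))$, and the change of variables $z=\delta_{\eps^{-1}}(y^{-1}x)$ gives
\[
\|{\rm Op}_\eps(\sigma)\|_{HS}^2 \;=\; \eps^{-Q}\int_U\|\kappa_x\|_{L^2(G)}^2\,dx,
\]
which blows up as $\eps\to 0$. Running your own estimate, the difference kernel has $L^2(U\times U)$ norm of order $\eps^{\,1-Q/2}$, not $\eps$, so the Hilbert--Schmidt bound is useless here. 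The paper avoids this by using the Schur-type estimate \eqref{eq_L2bdd}, namely $\|{\rm Op}_1(\sigma)\|_{\mathcal L(L^2)}\le \int_G\sup_{x}|\kappa_x(z)|\,dz$: after writing the difference in convolution form $R_\eps f(x)=f*r^{(\eps)}_{\eps,x}(x)$, one proves $\int_G\sup_x|r_{\eps,x}(z)|\,dz=O(\eps)$. With this replacement your argument goes through essentially unchanged. Note also that the paper inserts an explicit diagonal cut-off (Lemma~\ref{lem_diag}) before comparing, so that the Pansu expansion is only invoked in the regime $|\delta_\eps z^{-1}|_G\le r_0$ where Corollary~\ref{cor:LS} applies; this plays the role of your partition-of-unity reduction and should be made explicit, since the remainder $\rho(x,V,\eps)$ is only controlled on $\{\eps|V|_G\le r_0\}$.
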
 

Note that when we assume that the smooth diffeomorphism $\Phi$ is uniformly Pansu differentiable on $U$, this implies that the graded groups $G$ and $H$ are isomorphic (see Corollary~\ref{cor:iso}).

\smallskip 

Theorem~\ref{thm:inv} is proved in Section~\ref{sec:inv} below. It crucially relies on  the Pansu differentiability of filtration preserving maps (see  Section \ref{sec:FiltPres}). It also has straightforward consequences on semi-classical measures that we now present.  We associate with $\Phi$ a map from $\mathcal M^+_{ov}(U\times \widehat G)$ into $\mathcal M^+_{ov}(\Phi(U)\times \widehat H)$ which maps $\Gamma d\gamma\in \mathcal M^+_{ov}(U\times \widehat G)$ on 
$\Gamma^\Phi d\gamma^\Phi\in \mathcal M^+_{ov}(\Phi(U)\times \widehat H)$ defined by 
\begin{align*}
\int_{ \Phi(U)\times\widehat H} {\rm Tr} _{\mathcal H_{\pi'}} \left(\sigma (x',\pi')  \Gamma^\Phi(x',\pi')\right) d\gamma^\Phi(x',\pi') = 
\int_{ U\times\widehat G}   {\rm Tr} _{\mathcal H_\pi} \left( ((\widehat {\mathbb{G}}\Phi)^* \sigma ) (x,\pi) \Gamma(x,\pi)\right) d\gamma(x,\pi).
\end{align*}
Then, the result of Theorem~\ref{thm:inv} implies the next corollary.

\begin{corollary}
Let $G$ and $H$ be two graded groups, and let $\Phi$  a smooth diffeomorphism from an open set $U$ of $G$ to $H$.
Assume that $\Phi$ is filtration preserving on~$U$ and thus uniformly Pansu differentiable on~$U$. Let $(f^\eps)_{\eps>0}$ be a bounded family in $L^2(U)$ and $\Gamma d\gamma$ be a semi-classical measure of $(f^\eps)_{\eps>0}$ for the sequence $\eps_k$. Then, $\Gamma^\Phi d\gamma^\Phi $ is a semi-classical measure of the family $\left(\PullTwo^* f^\eps\right)_{\eps>0}$ for the sequence~$\eps_k$. 
\end{corollary}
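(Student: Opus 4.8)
The plan is to reduce the statement about semi-classical measures directly to Theorem~\ref{thm:inv}, using the definition of a semi-classical measure via the testing against symbols. First, I would fix the sequence $(\eps_k)_{k\in\mathbb N}$ and the semi-classical measure $\Gamma d\gamma$ of $(f^\eps)_{\eps>0}$; by definition, for every $\tau\in\mathcal A_0(U\times\widehat G)$ we have
$$
\left({\rm Op}_{\eps_k}(\tau) f^{\eps_k}, f^{\eps_k}\right)_{L^2(G)}\Tend{k}{+\infty}\int_{U\times\widehat G}{\rm Tr}_{\mathcal H_\pi}\left(\tau(x,\pi)\Gamma(x,\pi)\right)d\gamma(x,\pi).
$$
I want to show the analogous convergence for the family $\left(\PullTwo^* f^\eps\right)_{\eps>0}$ on $\Phi(U)$, tested against an arbitrary symbol $\sigma\in\mathcal A_0(\Phi(U)\times\widehat H)$, with limit given by $\Gamma^\Phi d\gamma^\Phi$.

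Second, I would compute $\left({\rm Op}_{\eps}(\sigma)\,\PullTwo^* f^\eps,\,\PullTwo^* f^\eps\right)_{L^2(H)}$. Since $\PullTwo$ is unitary from $L^2(\Phi(U))$ onto $L^2(U)$ with inverse (the relevant adjoint) $\PullTwo^*$, and $\PullTwo^*=\PullTwo^{-1}$, this inner product equals
$$
\left(\PullTwo\circ{\rm Op}_\eps(\sigma)\circ\PullTwo^{-1} f^\eps,\, f^\eps\right)_{L^2(G)}.
$$
Here I should be slightly careful that the operator ${\rm Op}_\eps(\sigma)$ acting on functions supported in $\Phi(U)$ and the conjugation $\PullTwo\circ{\rm Op}_\eps(\sigma)\circ\PullTwo^{-1}$ acting on $L^2(U)$ are exactly the objects appearing in Theorem~\ref{thm:inv}; this is a bookkeeping point about the support conditions built into $\mathcal A_0$, not a real difficulty.

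Third, I would invoke Theorem~\ref{thm:inv}: in $\mathcal L(L^2(U))$,
$$
\PullTwo\circ{\rm Op}_\eps(\sigma)\circ\PullTwo^{-1}={\rm Op}_\eps\left((\widehat{\mathbb G}\Phi)^*\sigma\right)+O(\eps).
$$
Since $(f^\eps)$ is bounded in $L^2(U)$, the $O(\eps)$ term contributes $O(\eps)$ to the inner product, hence vanishes as $\eps=\eps_k\to 0$. Therefore
$$
\left({\rm Op}_{\eps_k}(\sigma)\,\PullTwo^* f^{\eps_k},\,\PullTwo^* f^{\eps_k}\right)_{L^2(H)}
=\left({\rm Op}_{\eps_k}\left((\widehat{\mathbb G}\Phi)^*\sigma\right) f^{\eps_k},\, f^{\eps_k}\right)_{L^2(G)}+O(\eps_k).
$$
Because $(\widehat{\mathbb G}\Phi)^*\sigma\in\mathcal A_0(U\times\widehat G)$ (this is exactly the statement that $(\widehat{\mathbb G}\Phi)^*$ is an isomorphism of topological vector spaces onto $\mathcal A_0(U\times\widehat G)$, recalled before Theorem~\ref{thm:inv}), the definition of the semi-classical measure $\Gamma d\gamma$ applies with $\tau=(\widehat{\mathbb G}\Phi)^*\sigma$, giving
$$
\left({\rm Op}_{\eps_k}\left((\widehat{\mathbb G}\Phi)^*\sigma\right) f^{\eps_k},\, f^{\eps_k}\right)_{L^2(G)}\Tend{k}{+\infty}\int_{U\times\widehat G}{\rm Tr}_{\mathcal H_\pi}\left(((\widehat{\mathbb G}\Phi)^*\sigma)(x,\pi)\Gamma(x,\pi)\right)d\gamma(x,\pi).
$$
By the defining identity for $\Gamma^\Phi d\gamma^\Phi$, the right-hand side equals $\int_{\Phi(U)\times\widehat H}{\rm Tr}_{\mathcal H_{\pi'}}\left(\sigma(x',\pi')\Gamma^\Phi(x',\pi')\right)d\gamma^\Phi(x',\pi')$. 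Finally, one checks that $\Gamma^\Phi d\gamma^\Phi$ indeed lies in $\mathcal M^+_{ov}(\Phi(U)\times\widehat H)$ — positivity is inherited because $(\widehat{\mathbb G}\Phi)^*$ sends nonnegative symbols to nonnegative symbols (pointwise conjugation by the unitary intertwiner implementing $\pi\mapsto\pi\circ\PD_x\Phi^{-1}$ preserves positivity of operators), and the finiteness of the total trace, as well as the bound by $\limsup\|\PullTwo^* f^\eps\|^2_{L^2(\Phi(U))}=\limsup\|f^\eps\|^2_{L^2(U)}$, follows from the corresponding properties of $\Gamma d\gamma$ together with the change-of-variables built into the definition of $\widehat{\mathbb G}\Phi$. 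The main (minor) obstacle is this last verification that the pair $(\gamma^\Phi,\Gamma^\Phi)$ is a bona fide positive vector-valued measure rather than merely a formal functional; everything else is a direct substitution into Theorem~\ref{thm:inv} and the definition of semi-classical measures.
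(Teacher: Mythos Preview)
Your proposal is correct and matches the paper's approach: the paper does not give a detailed proof but simply states that ``the result of Theorem~\ref{thm:inv} implies the next corollary,'' and your argument is precisely the natural unpacking of that implication via the unitarity of $\PullTwo$ and the definition of $\Gamma^\Phi d\gamma^\Phi$.
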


\smallskip

The proof of Theorem~\ref{thm:inv} is developed in Section~\ref{sec:inv} and heavily uses the results of Section~\ref{sec:FiltPres} via  Theorem~\ref{thm_PDiffFP} and its consequences in Corollary \ref{cor:iso}. 
Similar results should hold in the
 microlocal case where no specific scale $\eps$ is chosen, as developed in~\cite{FF0}. 
Finally, we develop the geometric interpretation of the convolution kernel in Section~\ref{sec:app}.

\medskip 

\noindent{\bf Acknowledgments.} The authors thank Antoine Julia and Pierre Pansu for inspiring discussions.  


\section{Filtration preserving diffeomorphisms}\label{sec:FiltPres}

 Our aim in this section is to prove the equivalence between the uniform Pansu differentiability on an open set  and the preservation of  the  filtration above this set, see Theorem \ref{thm_PDiffFP}. We will start with introducing some notations and concepts in order to give a precise meaning to these properties.

\subsection{Pansu differentiability}\label{sec:PD}

In this section, 
we recall basic properties of the Pansu derivative:

\begin{lemma}
\label{rem:bound} 
Let $\Phi$ be a map from an open set $U$ of a graded Lie group $G$ to another graded Lie group $H.$
\begin{enumerate}
\item Let $x\in U$. If $\Phi$ is Pansu differentiable at the point $x$, then the Pansu derivative at $x$ is  a $1$-homogneous map $z\mapsto \PD_x\Phi(z)$ from $G$ to $H$ satisfying $\PD_x\Phi(0_G)=0_H$. 
Moreover, 
the function $\Phi$ is continuous at $x$. 
\item If $\Phi$ is uniformly Pansu differentiable on $U$, then the Pansu derivative at every point  $x\in U$  is a group morphism from $G$ to $H:$
$$
\forall z_1,z_2\in G,\quad \PD_x\Phi(z_1) \circ \PD_x\Phi(z_2) = \PD_x\Phi(z_1z_2).
$$
\end{enumerate}
\end{lemma}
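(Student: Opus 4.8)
The plan is to prove the two parts in order, using the definition of (uniform) Pansu differentiability directly. For part (1), fix $x\in U$ at which $\Phi$ is Pansu differentiable. The $1$-homogeneity of $z\mapsto \PD_x\Phi(z)$ is immediate from the defining limit \eqref{eq_lim_defPD}: for $r>0$ I would write, using the group and dilation structure,
$$
\PD_x\Phi(\delta_r z)=\lim_{\eps\to 0}\delta_{\eps^{-1}}\bigl(\Phi_x(\delta_\eps\delta_r z)\bigr)
=\lim_{\eps\to 0}\delta_{\eps^{-1}}\bigl(\Phi_x(\delta_{\eps r}z)\bigr)
=\delta_r\lim_{\eps'\to 0}\delta_{(\eps')^{-1}}\bigl(\Phi_x(\delta_{\eps'}z)\bigr)=\delta_r\bigl(\PD_x\Phi(z)\bigr),
$$
after the substitution $\eps'=\eps r$ and using that $\delta_{\eps^{-1}}=\delta_r\circ\delta_{(\eps r)^{-1}}$ together with continuity of $\delta_r$. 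Evaluating the defining limit at $z=0_G$ gives $\Phi_x(\delta_\eps 0_G)=\Phi_x(0_G)=\Phi(x)^{-1}\Phi(x)=0_H$, hence $\PD_x\Phi(0_G)=0_H$. Continuity of $\Phi$ at $x$: take $z$ near $0_G$ and $\eps=1$ in the defining expression to see that $\Phi(x)^{-1}\Phi(xz)=\Phi_x(z)\to 0_H$ as $z\to 0_G$ — more carefully, one uses that the limit in \eqref{eq_lim_defPD} exists, so for fixed small $z$ the quantity $\delta_{\eps^{-1}}(\Phi_x(\delta_\eps z))$ stays bounded as $\eps\to 0$, and combined with $1$-homogeneity of the limit this forces $\Phi_x(w)\to 0_H$ as $w\to 0_G$; translating back by $\Phi(x)$ gives $\Phi(xw)\to\Phi(x)$.

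For part (2), assume uniform Pansu differentiability and fix $x\in U$ and $z_1,z_2\in G$. The idea is the standard "freezing" argument: write
$$
\Phi_x(\delta_\eps(z_1 z_2))=\Phi_x\bigl((\delta_\eps z_1)(\delta_\eps z_2)\bigr)
=\Phi_x(\delta_\eps z_1)\cdot\Phi_{x\delta_\eps z_1}(\delta_\eps z_2),
$$
using the cocycle identity $\Phi_x(ab)=\Phi_x(a)\,\Phi_{xa}(b)$ which follows directly from the shorthand \eqref{eq_Sshorthand}. Applying $\delta_{\eps^{-1}}$ and using that $\delta_{\eps^{-1}}$ is a group automorphism of $H$,
$$
\delta_{\eps^{-1}}\bigl(\Phi_x(\delta_\eps(z_1z_2))\bigr)
=\delta_{\eps^{-1}}\bigl(\Phi_x(\delta_\eps z_1)\bigr)\cdot
\delta_{\eps^{-1}}\bigl(\Phi_{x\delta_\eps z_1}(\delta_\eps z_2)\bigr).
$$
The first factor on the right tends to $\PD_x\Phi(z_1)$ by definition. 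For the second factor I would use part (2) of Definition~\ref{def:pansu}: as $\eps\to 0$ the base point $x\delta_\eps z_1$ converges to $x$, so the second factor is $\delta_{\eps^{-1}}(\Phi_{x_\eps}(\delta_\eps z_2))$ with $x_\eps\to x$, and local uniformity of the limit on $U\times G$ lets me replace $x_\eps$ by $x$ in the limit, yielding $\PD_x\Phi(z_2)$. Passing to the limit and using continuity of multiplication in $H$ gives $\PD_x\Phi(z_1z_2)=\PD_x\Phi(z_1)\,\PD_x\Phi(z_2)$.

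The main obstacle is the passage $x_\eps\to x$ in the second factor of part (2): one needs the joint local uniformity of \eqref{eq_lim_defPD} on $U\times G$ rather than mere pointwise-in-$x$ convergence, and the argument should be made precise with an $\eps/2$-type estimate — choose a compact neighborhood $K$ of $(x,z_2)$ on which the convergence is uniform, note $(x\delta_\eps z_1,z_2)\in K$ for $\eps$ small, and bound
$$
d_H\Bigl(\delta_{\eps^{-1}}\bigl(\Phi_{x\delta_\eps z_1}(\delta_\eps z_2)\bigr),\;\PD_x\Phi(z_2)\Bigr)
\le d_H\Bigl(\delta_{\eps^{-1}}\bigl(\Phi_{x\delta_\eps z_1}(\delta_\eps z_2)\bigr),\;\PD_{x\delta_\eps z_1}\Phi(z_2)\Bigr)
+d_H\bigl(\PD_{x\delta_\eps z_1}\Phi(z_2),\;\PD_x\Phi(z_2)\bigr),
$$
where the first term is small by uniform convergence on $K$ and the second is small once we know $x'\mapsto\PD_{x'}\Phi(z_2)$ is continuous — which itself follows from the uniform limit being a continuous function of $x'$ on $K$ (a uniform limit of the continuous-in-$x'$ functions $x'\mapsto\delta_{\eps^{-1}}(\Phi_{x'}(\delta_\eps z_2))$). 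Everything else is bookkeeping with dilations and the group law.
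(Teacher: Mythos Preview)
Your proposal is correct and follows essentially the same approach as the paper. For Part~(1) you use the same substitution $\eps'=\eps r$ for homogeneity and the same dilation-rescaling idea for continuity (the paper writes $z=\delta_{|z|}\underline z$ and applies $\delta_{|z|}$ to the bounded quantity $\delta_{|z|^{-1}}\Phi_x(\delta_{|z|}\underline z)$, which is exactly your ``stays bounded, then contract by $\delta_{|w|}$'' argument); for Part~(2) you use the identical cocycle decomposition $\Phi_x(\delta_\eps z_1\,\delta_\eps z_2)=\Phi_x(\delta_\eps z_1)\,\Phi_{x\delta_\eps z_1}(\delta_\eps z_2)$ and invoke the local uniformity in $(x,z)$, with your triangle-inequality step and the continuity of $x'\mapsto\PD_{x'}\Phi(z_2)$ making explicit what the paper leaves as ``use the uniformity of the convergence.''
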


\begin{proof}
(1) This comes from the definition and the observation that  for $r>0$, 
$$ \delta_{\eps^{-1} }\left(\Phi(x)^{-1} \Phi(x\delta_\eps \delta_rz)\right)=
 \delta_r \left[ \delta_{(\eps r)^{-1} }\left(\Phi(x)^{-1} \Phi(x\delta_{\eps r} z)\right)\right].$$
 Passing to the limit as $\eps$ goes to $0$, we obtain 
 $$
\PD_x \Phi(\delta_r z) = \delta_r \left[ \PD_x \Phi(z)\right].
 $$
 Then, fixing a homogeneous quasi-norm on $G$ and using that $z=\delta_{|z| } \underline z$ with $|\underline z|=1$, one writes 
 $$\Phi(x)^{-1} \Phi(xz)= \delta_{|z|} \left(  \delta_{|z|^{-1} }\Phi(x)^{-1} \Phi(x\delta_{|z|}\underline z)\right) =  \delta_{|z|} \left(  \PD_x(\underline z)+O(|z|)\right)$$
 Therefore, $\Phi(x)^{-1} \Phi(xz)\Tend{|z|}{ 0} 0$, whence the continuity of $x\mapsto \Phi(x)$. 
 
 \medskip 

\noindent (2) We write 
$$\delta_{\eps^{-1}} \left( \Phi(x)^{-1} \Phi(x\delta_\eps (z_1z_2))\right) = 
 \delta_{\eps^{-1}} \left( \Phi(x)^{-1} \Phi(x\delta_\eps (z_1))\right)
 \delta_{\eps^{-1}} \left( \Phi(x\delta_\eps z_1)^{-1} \Phi(x\delta_\eps (z_1)\delta_\eps (z_2))\right)$$
 and use the uniformity of the convergence. 
\end{proof}

The hypothesis of uniformity for the Pansu differentiability is needed 
to show that the Pansu derivative is a group morphism (see Part (2) above) but also for the following composition property:

\begin{lemma}\label{lem:comp}
Let $F,G,H$ be three graded Lie groups, let $\Phi$ be a map from an open set $U$ of $G$ to $H$, and let $\Psi$ be a map from an open set $U'$ of $F$ to $G$. 
We assume that $\Psi(U')\subset U$.   
If $\Phi$ and $\Psi$ are uniformly Pansu differentiable on $U$ and $U'$ respectively, 
then their composition $\Phi\circ \Psi$ is uniformly Pansu differentiable on $U'$ with 
$$
\forall w\in U', \ z\in F\qquad
\PD_w (\Phi\circ \Psi )(z) = \PD_{\Psi(w)} \Phi ( \PD_w \Psi(z)).
$$ 
\end{lemma}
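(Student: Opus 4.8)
The plan is to proceed directly from the definition of uniform Pansu differentiability, mirroring the strategy that was just used in Lemma~\ref{rem:bound}(2). The key algebraic identity is the following decomposition of the difference quotient for $\Phi\circ\Psi$: for $w\in U'$, $z\in F$ and $\eps>0$,
\begin{equation*}
\delta_{\eps^{-1}}\Big( (\Phi\circ\Psi)(w)^{-1} (\Phi\circ\Psi)(w\delta_\eps z)\Big)
= \delta_{\eps^{-1}}\Big( \Phi(\Psi(w))^{-1} \Phi\big(\Psi(w)\cdot w_\eps\big)\Big),
\end{equation*}
where I set $w_\eps := \Psi(w)^{-1}\Psi(w\delta_\eps z)$, i.e. $w_\eps = \Psi_w(\delta_\eps z)$ in the shorthand~\eqref{eq_Sshorthand}. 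By uniform Pansu differentiability of $\Psi$ at $w$, we have $\delta_{\eps^{-1}}(w_\eps)\to \PD_w\Psi(z)$ as $\eps\to 0$, locally uniformly in $(w,z)$; write $w_\eps = \delta_\eps\big(\PD_w\Psi(z) + o(1)\big)$ where the $o(1)$ is locally uniform. Substituting, the right-hand side becomes
\begin{equation*}
\delta_{\eps^{-1}}\Big( \Phi(\Psi(w))^{-1}\Phi\big(\Psi(w)\,\delta_\eps(\PD_w\Psi(z)+o(1))\big)\Big),
\end{equation*}
and the goal is to show this converges to $\PD_{\Psi(w)}\Phi(\PD_w\Psi(z))$, locally uniformly in $(w,z)$.

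The main obstacle — and the only place uniformity is genuinely used in a nontrivial way — is handling the perturbed argument $\PD_w\Psi(z)+o(1)$ in place of the clean point $\PD_w\Psi(z)$ inside $\Phi$'s difference quotient. I would address this in two steps. First, I note that $\Psi(U')$ is contained in $U$ and, since $(w,z)$ ranges over a compact set $K\subset U'\times F$, the points $\Psi(w)$ stay in a compact subset of $U$ and the values $\PD_w\Psi(z)$ stay in a compact subset $L\subset G$ (by continuity of $(w,z)\mapsto\PD_w\Psi(z)$, which follows from Lemma~\ref{rem:bound}(1) applied to $\Psi$ together with the uniform convergence; in fact one can invoke Theorem~\ref{thm_PDiffFP} if $\Psi$ is smooth, but continuity of the Pansu derivative alone suffices). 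Second, I invoke the uniform convergence of $\Phi$'s difference quotient on the compact set $\{(\Psi(w), \zeta): (w,z)\in K,\ \zeta\in L'\}$ for a slightly enlarged compact $L'\supset L$: given $\eta>0$, there is $\eps_0$ so that for all $\eps<\eps_0$ and all such $(\Psi(w),\zeta)$,
\begin{equation*}
\Big| \delta_{\eps^{-1}}\big(\Phi(\Psi(w))^{-1}\Phi(\Psi(w)\delta_\eps\zeta)\big) - \PD_{\Psi(w)}\Phi(\zeta)\Big| < \eta,
\end{equation*}
in a fixed homogeneous quasi-norm on $H$. Applying this with $\zeta = \PD_w\Psi(z)+o(1)$ (which lies in $L'$ for $\eps$ small), and then using the continuity — in fact local uniform continuity on compacts — of the limiting map $(w',\zeta)\mapsto\PD_{w'}\Phi(\zeta)$ to replace $\PD_{\Psi(w)}\Phi(\PD_w\Psi(z)+o(1))$ by $\PD_{\Psi(w)}\Phi(\PD_w\Psi(z))$ up to an error tending to $0$, gives the claimed convergence, locally uniformly. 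Continuity of $(w',\zeta)\mapsto\PD_{w'}\Phi(\zeta)$ on $U\times G$ is needed here; it follows from the uniform convergence in the definition (a locally uniform limit of continuous functions is continuous, and each difference quotient $(w',\zeta)\mapsto\delta_{\eps^{-1}}(\Phi(w')^{-1}\Phi(w'\delta_\eps\zeta))$ is continuous by continuity of $\Phi$), or again from Theorem~\ref{thm_PDiffFP} in the smooth case.

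Once the pointwise-plus-locally-uniform convergence of the difference quotients for $\Phi\circ\Psi$ is established with limit $(w,z)\mapsto\PD_{\Psi(w)}\Phi(\PD_w\Psi(z))$, both conclusions of the lemma follow at once: the limit exists at every $w\in U'$ for every $z\in F$, so $\Phi\circ\Psi$ is Pansu differentiable at each point of $U'$; the convergence is locally uniform on $U'\times F$, so $\Phi\circ\Psi$ is \emph{uniformly} Pansu differentiable on $U'$; and by the uniqueness of limits the Pansu derivative is exactly the composition formula $\PD_w(\Phi\circ\Psi)(z) = \PD_{\Psi(w)}\Phi(\PD_w\Psi(z))$. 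I would remark in passing that since $\PD_w\Psi$ and $\PD_{\Psi(w)}\Phi$ are group morphisms by Lemma~\ref{rem:bound}(2), their composition is too, which is consistent with what uniform Pansu differentiability of $\Phi\circ\Psi$ already guarantees.
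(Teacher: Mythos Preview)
Your proof is correct and follows essentially the same approach as the paper's: both rewrite the difference quotient for $\Phi\circ\Psi$ as $\delta_{\eps^{-1}}\big(\Phi(\Psi(w))^{-1}\Phi(\Psi(w)\delta_\eps\tilde z_\eps)\big)$ with $\tilde z_\eps:=\delta_{\eps^{-1}}\Psi_w(\delta_\eps z)$, observe that $(\Psi(w),\tilde z_\eps)$ ranges in a compact subset of $U\times G$, and then invoke uniform Pansu differentiability of $\Phi$ on that compact together with $\tilde z_\eps\to\PD_w\Psi(z)$. Your version is simply more explicit in breaking the final step into two pieces (uniform closeness to $\PD_{\Psi(w)}\Phi(\tilde z_\eps)$, then continuity of $(w',\zeta)\mapsto\PD_{w'}\Phi(\zeta)$), whereas the paper compresses this into a single sentence.
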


\begin{proof}
Write
\begin{align*}
\tilde z_\eps:= 
\delta_{\eps^{-1}}\left(\Psi(w)^{-1} \Psi(w\delta_\eps z)\right),
\end{align*}
for $(w, z)$ ranging in a compact subset of $U'\times F$ and $\eps\in (0, 1]$. Observe that
\begin{align}
\delta_{\eps ^{-1}} \left(\Phi\circ \Psi(w)^{-1} \Phi\circ \Psi(w\delta_\eps z)\right) = 
\delta_{\eps ^{-1} }\left( \Phi(\Psi(w))^{-1} \Phi\left(\Psi(w) \delta_\eps \tilde z_\eps \right)\right).\label{RHS}
\end{align}

By continuity of $\Phi$, and uniform Pansu differentiability of $\Psi$, the pair $(\Psi(w), \tilde{z}_\eps)$  ranges in a compact subset of $\Psi(U')\times G$ on which the map
\begin{align*}
(w', z')\mapsto \delta_\eps^{-1}\left(\Phi(w')^{-1}\Phi(w'\delta_\eps z')\right)
\end{align*}
converges uniformly as $\eps\to 0$ (by uniform Pansu differentiability of $\Phi$). Since
$\lim_{\eps\to 0} \tilde z_\eps =\PD_w\Psi(z)$,
the limit of \eqref{RHS}  as $\eps\to 0$ is $\PD_{\Psi(w)} \Phi ( \PD_w \Psi(z))$.
\end{proof}

\begin{remark}
\label{rem_rem:bound}
Let us consider a map $\Phi$ from an open set $U$ of a graded group $G$ to a graded group~$H$
such that  $\Phi$ is a bijection from $U$ onto its image $\Phi(U)$ which is open. 
If $\Phi$ and its inverse $\Phi^{-1}$ are uniformly Pansu differentiable on $U$ and $\Phi(U)$ respectively, 
then 
we may apply Lemmata \ref{rem:bound} and~\ref{lem:comp}  and use $\PD_x{\rm Id}={\rm Id}$ to obtain
$$
\forall x\in U\quad (\PD_x \Phi )^{-1}= \PD_{\Phi(x) }( \Phi^{-1}).
$$
In particular, the groups $G$ and $H$ are isomorphic. 
\end{remark}


\subsection{Filtration preserving smooth maps}

In this section, we study how to characterize smooth maps that are filtration preserving.

A matrix-valued viewpoint will be helpful for a deeper understanding of  Definition~\ref{def_filtration_preserving}. 
For a smooth function $\Phi$ from an open set $U$ of $G$ to $H$,
we denote  by  $M_\Phi(x)$ the matrix of $\mathfrak d_x \Phi$ for the bases 
$$\mathcal B=(X_1,X_2, \cdots , X_{\dim G} )\;\;\mbox{and}\;\;\mathcal C=(Y_1,Y_2, \cdots , Y_{\dim H})$$
 of $\mathfrak g= \oplus_{j=1}^\infty \mathfrak g_j $ and $\mathfrak h = \oplus_{j=1}^\infty \mathfrak h_j $  adapted to the respective gradations (see Section \ref{sec:intro}).
 This matrix can be written by blocks $M_{\Phi, i,j}(x)$   associated with the gradation. 
 As the map $\mathfrak d_x \Phi$ is linear, 
 in order to identify the blocks $M_{\Phi,i,j}(x)$, it is enough to let~$V$ vary in  $\mathfrak g_j$ and calculate  the projection on $\mathfrak h_j$ of $\mathfrak d_x\Phi(V)$.  
For this, we denote by $\pr_{\mathfrak h,j}$
the projection onto $\mathfrak h_j$ along $\oplus_{j'\not = j} \mathfrak h_{j'}$.
We may allow ourselves to remove the subscript~$\mathfrak h$ (and write $\pr_{\mathfrak j}$ instead of $\pr_{\mathfrak h,j}$) when the context is clear.
 
 Let us illustrate this point with the following equivalences:
\begin{lemma}
\label{lem_eq_filtration_preserving}
Let $\Phi$ be a smooth function from an open set $U$ of $G$ to $H$, and let $x\in U$. 
\item The following are equivalent:
\begin{itemize}
\item[(i)] $\Phi$ preserves the filtration at $x$,
\item[(ii)] the matrix $M_\Phi(x)$ defined above is block-upper-diagonal in the sense that all the blocks $M_{\Phi,i,j}(x)$, $i>j$, strictly below the diagonal are 0,
\item[(iii)] we have for any $i>j$
$$
\forall \ V\in\mathfrak g_j,\qquad 
\pr_{\mathfrak h,i}( \mathfrak d_x\Phi(V))=0.
$$
\end{itemize}
\end{lemma}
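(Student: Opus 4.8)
The plan is to prove the three conditions equivalent by showing (i) $\iff$ (ii) and (ii) $\iff$ (iii), both following almost directly from unwinding the definitions. The key observation is that the block structure of $M_\Phi(x)$ is, by construction, read off from how $\mathfrak{d}_x\Phi$ maps the graded pieces $\mathfrak{g}_j$ into the graded pieces $\mathfrak{h}_i$: the block $M_{\Phi,i,j}(x)$ is precisely the matrix of $\pr_{\mathfrak{h},i}\circ \mathfrak{d}_x\Phi\big|_{\mathfrak{g}_j}$ with respect to the adapted bases $\mathcal{B}$ and $\mathcal{C}$.

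First I would establish (ii) $\iff$ (iii). Saying that $M_{\Phi,i,j}(x)=0$ for all $i>j$ is, by the very definition of the blocks, the same as saying that $\pr_{\mathfrak{h},i}(\mathfrak{d}_x\Phi(X_k))=0$ whenever $X_k$ is a basis vector of $\mathfrak{g}_j$ and $i>j$. Since $\mathfrak{d}_x\Phi$ is linear and $\pr_{\mathfrak{h},i}$ is linear, this holds on all basis vectors of $\mathfrak{g}_j$ if and only if it holds for all $V\in\mathfrak{g}_j$. This gives (iii) for the pair $(i,j)$ with $i>j$, and conversely. So (ii) and (iii) are literally restatements of one another once one spells out what "block" means.

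Next I would establish (i) $\iff$ (iii). By Definition~\ref{def_filtration_preserving}, $\Phi$ preserves the filtration at $x$ means $\mathfrak{d}_x\Phi(\mathfrak{g}_j)\subset \mathfrak{h}_1\oplus\cdots\oplus\mathfrak{h}_j$ for every $j$. Since $\{\pr_{\mathfrak{h},i}\}_i$ is the family of projections associated with the direct sum decomposition $\mathfrak{h}=\oplus_i \mathfrak{h}_i$, a vector $W\in\mathfrak{h}$ lies in $\mathfrak{h}_1\oplus\cdots\oplus\mathfrak{h}_j$ if and only if $\pr_{\mathfrak{h},i}(W)=0$ for all $i>j$. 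Applying this with $W=\mathfrak{d}_x\Phi(V)$ for $V\in\mathfrak{g}_j$ ranging over $\mathfrak{g}_j$ shows that the inclusion for the index $j$ is equivalent to the vanishing statement in (iii) for all $i>j$. Quantifying over all $j$ yields the equivalence of (i) and (iii), and hence the lemma.

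There is essentially no obstacle here: the statement is a bookkeeping lemma whose whole content is translating the coordinate-free condition of Definition~\ref{def_filtration_preserving} into the matrix language that later proofs will use. The only point requiring a modicum of care is making sure the indexing conventions match — that the block $M_{\Phi,i,j}$ really corresponds to "output in stratum $i$, input from stratum $j$", so that "$i>j$" is indeed "strictly below the diagonal" — but this is fixed by the choice of adapted bases $\mathcal{B}$ and $\mathcal{C}$ made just above the lemma, listing the strata in increasing order.
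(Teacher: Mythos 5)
Your proof is correct and is exactly the routine unwinding of definitions that the lemma calls for; the paper itself omits any proof, treating the equivalences as immediate from the definitions of the blocks $M_{\Phi,i,j}(x)$ and of filtration preservation. Your argument supplies precisely the expected bookkeeping (block $M_{\Phi,i,j}$ represents $\pr_{\mathfrak h,i}\circ\mathfrak d_x\Phi|_{\mathfrak g_j}$ in adapted bases, and membership in $\mathfrak h_1\oplus\cdots\oplus\mathfrak h_j$ is equivalent to vanishing of $\pr_{\mathfrak h,i}$ for $i>j$), so there is nothing to compare against and nothing missing.
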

 
We obtain easily the following implication between preserving the filtration and uniform Pansu differentiability: 
\begin{lemma}
\label{lem_PD_filtration}
Let $\Phi$ be a smooth function from an open set $U$ of $G$ to $H$, and let $x\in U$. 
If $\Phi$ is  Pansu differentiable at $x$, then $\Phi$ preserves the filtration at  $x.$
\end{lemma}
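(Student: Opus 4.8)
The plan is to unwind the existence of the Pansu limit \eqref{eq_lim_defPD} along carefully chosen directions $z=\exp_G(tV)$ with $V\in\mathfrak g_j$, extract the leading-order behaviour in $\eps$, and compare it with the $t$-derivative description of $\mathfrak d_x\Phi$ from \eqref{eq:tgoes0}. Concretely, fix $j$ and $V\in\mathfrak g_j$. Since $\delta_\eps\exp_G(tV)=\exp_G(t\eps^j V)$, Pansu differentiability at $x$ gives that
$$
\delta_{\eps^{-1}}\bigl(\Phi_x(\exp_G(t\eps^j V))\bigr)
$$
converges as $\eps\to0$ (for each fixed $t$, and with $\PD_x\Phi$ $1$-homogeneous by Lemma~\ref{rem:bound}(1)). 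On the other hand, smoothness of $\Phi$ lets me Taylor expand: writing $w(s):=\ln_H(\Phi_x(\exp_G(sV)))$, we have $w(s)=s\,\mathfrak d_x\Phi(V)+O(s^2)$ in $\mathfrak h$ as $s\to0$, by the definition of $\mathfrak d_x\Phi$ in \eqref{eq:tgoes0}. Taking $s=t\eps^j$ and applying $\delta_{\eps^{-1}}$ componentwise along the grading $\mathfrak h=\oplus_i\mathfrak h_i$, the $\mathfrak h_i$-component of $\delta_{\eps^{-1}}w(t\eps^j)$ is $\eps^{-i}\bigl(t\eps^j\,\pr_{\mathfrak h,i}\mathfrak d_x\Phi(V)+O(\eps^{2j})\bigr)=t\,\eps^{\,j-i}\,\pr_{\mathfrak h,i}\mathfrak d_x\Phi(V)+O(\eps^{2j-i})$.

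Now I read off convergence as $\eps\to0$: for $i>j$ the exponent $j-i$ is negative, so the term $t\,\eps^{j-i}\pr_{\mathfrak h,i}\mathfrak d_x\Phi(V)$ blows up unless $\pr_{\mathfrak h,i}\mathfrak d_x\Phi(V)=0$. (The remainder $O(\eps^{2j-i})$ is harmless: if $2j-i<0$ as well it can be absorbed, but one should be slightly careful and instead argue order by order — subtract off the already-identified lower-order terms before looking at the next coefficient; since the limit as a whole exists, each offending positive power of $\eps^{-1}$ must have vanishing coefficient.) Hence $\pr_{\mathfrak h,i}(\mathfrak d_x\Phi(V))=0$ for all $i>j$ and all $V\in\mathfrak g_j$, which by Lemma~\ref{lem_eq_filtration_preserving}(iii) is exactly the statement that $\Phi$ preserves the filtration at $x$.

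The main obstacle I anticipate is purely bookkeeping: making the "peel off the singular coefficients one at a time" argument rigorous, i.e. justifying that from the mere existence of $\lim_{\eps\to0}\delta_{\eps^{-1}}w(t\eps^j)$ one may conclude that the coefficient of every strictly negative power of $\eps$ in the $\mathfrak h_i$-component vanishes. The clean way is an induction on $i$ descending from $n_H$ (or ascending in the "badness" $i-j$): the coefficient of the most singular power $\eps^{j-n_H}$ in component $\mathfrak h_{n_H}$ must vanish first, then $\eps^{j-(n_H-1)}$, and so on, using that the Taylor remainder genuinely is of higher order in $s=t\eps^j$ so it cannot cancel a lower power of $\eps$. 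Everything else — the substitution $\delta_\eps\exp_G(tV)=\exp_G(t\eps^jV)$, the $C^1$ Taylor expansion, and the componentwise action of $\delta_{\eps^{-1}}$ — is routine.
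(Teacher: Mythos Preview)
Your approach is correct and rests on the same idea as the paper's proof: for $V\in\mathfrak g_j$ and $i>j$, project the Pansu limit onto $\mathfrak h_i$ and observe that the coefficient of the singular power $\eps^{j-i}$ is precisely $\pr_{\mathfrak h,i}\mathfrak d_x\Phi(V)$. The paper's execution is cleaner, however, because it bypasses Taylor expansion altogether. Writing $w(s)=\ln_H\Phi_x(\exp_G(sV))$, the paper uses the exact homogeneity identity
\[
\eps^{-j}\,\pr_{\mathfrak h,i}\bigl(w(\eps^j)\bigr)
=\eps^{i-j}\,\pr_{\mathfrak h,i}\bigl(\delta_{\eps^{-1}}w(\eps^j)\bigr)
=\eps^{i-j}\,\pr_{\mathfrak h,i}\circ\ln_H\!\bigl(\delta_{\eps^{-1}}\Phi_x(\delta_\eps\exp_G V)\bigr).
\]
The left-hand side converges to $\pr_{\mathfrak h,i}\mathfrak d_x\Phi(V)$ by \eqref{eq:tgoes0} with $t=\eps^j$; the bracketed quantity on the right converges by the Pansu hypothesis; and $\eps^{i-j}\to0$. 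No remainder, no induction.

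Your anticipated obstacle in fact dissolves even within your own framework. To extract only the leading coefficient $\pr_{\mathfrak h,i}\mathfrak d_x\Phi(V)$, multiply your convergent quantity $\eps^{-i}\pr_{\mathfrak h,i}w(t\eps^j)$ by $\eps^{i-j}$: the Taylor remainder then contributes $O(\eps^{2j-i})\cdot\eps^{i-j}=O(\eps^j)\to0$, while the product $\eps^{i-j}$ times a convergent quantity also tends to $0$, so $\pr_{\mathfrak h,i}\mathfrak d_x\Phi(V)=0$ directly --- no peeling, no descending induction on $i$. (The parameter $t$ is superfluous throughout.) This is exactly the paper's argument in disguise.
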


\begin{proof} Let  $V\in \mathfrak g_j$.
We have by taking $t=\eps^j$ in~\eqref{eq:tgoes0},
  $$
\pr_{\mathfrak h,i} (\mathfrak d_x\Phi(V))
  =
  \lim_{\eps \rightarrow 0}  
  \pr_{\mathfrak h,i}\left(\eps^{-j}  \ln_H(\Phi_x({\exp_G}(\eps^j V)))\right)
  $$
while the properties of dilations yield
  $$ 
  \pr_{\mathfrak h,i}\left(\eps^{-j}  \ln_H(\Phi_x({\exp_G}(\eps^j V)))\right)
  =
   \eps^{i-j}\pr_{\mathfrak h,i} \circ \ln_H \left( \delta_{\eps^{-1}} \Phi_x({\exp_G}(\delta_\eps V))\right).
  $$
As $\Phi$ is Pansu differentiable at $x$, the argument inside $\pr_{\mathfrak h,i}\circ \ln_H$ above has a limit. 
 Hence, if $i>j$, we have $  \pr_{\mathfrak h,i} (\mathfrak d_x\Phi(V))=0$ and we conclude with Lemma \ref{lem_eq_filtration_preserving}.
\end{proof}

In the next sections, we will analyze the reverse implication to the one in Lemma \ref{lem_PD_filtration}, using this matrix-valued point of view; this will give Theorem~\ref{thm_PDiffFP}. 


\subsection{Characterization of Pansu differentiability for smooth maps}
If $\Phi$ is Pansu differentiable at $x\in U$, 
we set 
$$
\pd_x \Phi  := {\ln_H} \circ \PD_x \Phi \circ {\exp_G},
$$
so that we have the following diagram :
\begin{equation*}
 \begin{matrix}
   \; &      \;     &    \PD_x\Phi  &   \;     &\\
   &G & \rightarrow &  H& \\
{\exp_G}  & \uparrow &               & \uparrow        &  {\exp_H} \\
& \mathfrak g & \rightarrow &   \mathfrak h & \\
&                   &    \pd _x \Phi &       &
\end{matrix}
\ . 
\end{equation*} 

This defines the map $\pd_x \Phi:\mathfrak g \to \mathfrak h$.
An equivalent definition for this map is given by 
\begin{align}
\label{eq:tgoes0PD} \pd_x\Phi(V)
&=
\lim_{t\rightarrow 0} \delta_{t^{-1}} 
\ln_H \left(
 \Phi(x)^{-1} \Phi(x{\exp_G}(\delta_t V))\right)
=
\lim_{t\rightarrow 0} \delta_{t^{-1}} 
\ln_H \left(
 \Phi_x({\exp_G}(\delta_t V))\right), 
\end{align}
for $V\in \mathfrak g$, having used the shorthand \eqref{eq_Sshorthand}.
Clearly, $\Phi$ is Pansu differentiable at $x\in U$ if and only if the limit in \eqref{eq:tgoes0PD} exists for all $V\in \mathfrak g$, and it is uniformly Pansu differentiable on $U$ if and only if these limits hold locally uniformly on $U\times \mathfrak g.$ 

\medskip

The map $\pd_x\Phi$ may not be linear in general but it will be under mild hypotheses. 
Indeed, when $z\mapsto \PD_x\Phi(z)$ is a continuous group morphism, 
for instance when $\Phi$ is uniformly Pansu differentiable on an open neighbourhood of $x$,
then $\pd_x\Phi$ is linear with 
$$
\PD_x\circ {\exp_G} (V) = ({\exp_H} \circ\, \pd_x \Phi) (V)
\qquad\mbox{and}\qquad
\pd_x\Phi(V) = \partial_{t=0} \PD_x\circ {\exp_G} (tV), \qquad V\in \mathfrak g.
$$

\medskip

As above, we can adopt a matrix-valued point of view and define 
$\PM_\Phi(x)$ to be the matrix whose columns are the images of $\mathcal B$ by $\pd_x\Phi$, that is, the vectors $\pd_x\Phi (X_j)$, $j=1,\ldots, \dim G$, expressed in the basis $\mathcal C$.
The (rectangular)  matrix $\PM_\Phi(x)$ is of the same size as $M_\Phi(x)$.
It makes sense to look at $\PM_\Phi(x)$ and its blocks $\PM_{\Phi,i,j}(x)$ associated with the gradation, or rather to the quantities $\pr_{\mathfrak h,i} (\pd_x\Phi(V)),$  $V\in \mathfrak g_j$. 
As a consequence of the definitions of the objects involved and of homogeneous properties, for each $V\in \mathfrak g_j$ with $i,j=1,2,\ldots,$
\begin{equation}\label{eq:coefPDPhi}
\lim_{\eps\to 0 }
\eps^{-i}\pr_{\mathfrak h,i} \circ \ln_H (\Phi_x({\exp_G} (\eps^j V ))
=
\pr_{\mathfrak h, i} (\pd_x\Phi(V)).
\end{equation}
Actually, very few of the quantities $\pr_{\mathfrak h,i} (\pd_x\Phi(V)),$  $V\in \mathfrak g_j$, are non-zero:

\begin{lemma}
\label{lem_MPD}
Let $\Phi$ be a smooth function from an open set $U$ of $G$ to $H$, and let $x\in U$.
We always have (regardless of whether $\Phi$ is uniformly Pansu differentiable or preserves the filtration)
for $V\in \mathfrak g_j $
$$
\lim_{\eps\to 0 }
\eps^{-i}\pr_{\mathfrak h,i} \circ \ln_H (\Phi_x({\exp_G} (\eps^j V ))
=
\left\{\begin{array}{ll} 
0 & \mbox{when}\ j>i,\\
\pr_{\mathfrak h,i}(\mathfrak d_x\Phi(V))& \mbox{when}\ j=i.
\end{array}\right. 
$$
\end{lemma}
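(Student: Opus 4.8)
The plan is to analyze the quantity
$\eps^{-i}\pr_{\mathfrak h,i}\circ\ln_H(\Phi_x(\exp_G(\eps^j V)))$ by Taylor-expanding in $\eps$ and tracking the gradation. First I would recall from \eqref{eq:tgoes0} that, writing $t=\eps^j$, the smooth curve $t\mapsto \ln_H(\Phi_x(\exp_G(tV)))$ in $\mathfrak h$ vanishes at $t=0$ (since $\Phi_x(0_G)=0_H$) and has derivative $\mathfrak d_x\Phi(V)$ there. So by a first-order Taylor expansion,
$$
\ln_H\bigl(\Phi_x(\exp_G(\eps^j V))\bigr) = \eps^j\,\mathfrak d_x\Phi(V) + O(\eps^{2j}),
$$
where the remainder is a smooth $\mathfrak h$-valued function of $(\eps,x)$, uniform on compacts. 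Applying $\eps^{-i}\pr_{\mathfrak h,i}$ gives
$$
\eps^{-i}\pr_{\mathfrak h,i}\circ\ln_H\bigl(\Phi_x(\exp_G(\eps^j V))\bigr)
= \eps^{j-i}\,\pr_{\mathfrak h,i}(\mathfrak d_x\Phi(V)) + O(\eps^{2j-i}).
$$

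Now I would split into the two cases. When $j>i$: the leading term has exponent $j-i>0$, hence tends to $0$; and $2j-i > j-i > 0$ as well, so the remainder also tends to $0$. This gives the first line. When $j=i$: the leading term is exactly $\pr_{\mathfrak h,i}(\mathfrak d_x\Phi(V))$ (independent of $\eps$), while the remainder has exponent $2j-i = j = i \ge 1 > 0$ and thus vanishes in the limit. This gives the second line. (One should note the statement is silent about $j<i$, consistently with the fact that there the expression generically blows up unless $\Phi$ preserves the filtration — this is exactly what Lemma~\ref{lem_PD_filtration} and the later analysis exploit.)

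The only genuinely delicate point is justifying the Taylor expansion cleanly and uniformly: one must check that $t\mapsto \ln_H(\Phi_x(\exp_G(tV)))$ is smooth near $t=0$ with the claimed first derivative, and that the $O(\eps^{2j})$ is genuinely controlled. Smoothness is immediate since $\Phi$, $\exp_G$, $\ln_H$, and left-translation are all smooth, and the composition is defined for $t$ small (by continuity of $\Phi$, cf. Lemma~\ref{rem:bound}(1)); the first derivative is $\mathfrak d_x\Phi(V)$ by the very definition \eqref{eq:tgoes0}; and Taylor's theorem with remainder gives the $O(t^2)=O(\eps^{2j})$ bound, locally uniformly in $x$ since all data depend smoothly on $x$. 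I expect this bookkeeping — rather than any conceptual difficulty — to be the main thing to write out carefully, and it is entirely routine.
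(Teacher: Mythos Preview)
Your proof is correct and follows essentially the same idea as the paper's. The paper's argument is even more compressed: it simply factors $\eps^{-i}=\eps^{j-i}\cdot\eps^{-j}$ and observes that $\eps^{-j}\ln_H(\Phi_x(\exp_G(\eps^j V)))\to \mathfrak d_x\Phi(V)$ by the very definition \eqref{eq:tgoes0}, so projecting onto $\mathfrak h_i$ and multiplying by $\eps^{j-i}$ immediately gives both cases---no explicit Taylor remainder bookkeeping is needed. Your Taylor expansion is just a slightly more quantitative restatement of this same limit (and your side reference to Lemma~\ref{rem:bound}(1) is unnecessary: $\Phi$ is smooth by hypothesis, and the composition is defined for small $t$ simply because $U$ is open).
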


\begin{proof}[Proof of Lemma~\ref{lem_MPD}]
For each $x\in U$ and $V\in \mathfrak g_j$,    we have
$$
\eps^{-i}\pr_{\mathfrak h,i} \circ \ln_H (\Phi_x({\exp}_G (\eps^j V ))
=
  \eps ^{j-i}\pr_{\mathfrak h,i} \left(\eps^{-j} \ln_H (\Phi_x({\exp}_G(\eps^j V)))\right),
$$
and the last argument of $\pr_{\mathfrak h,i}$ tends to $\mathfrak d_x\Phi(V)$ as $\eps$ goes to $0$.
\end{proof}

\begin{remark}\label{rem_MPD}
\begin{enumerate}
\item By Lemma \ref{lem_MPD} and equation~\eqref{eq:coefPDPhi}, if $\Phi$ is Pansu differentiable at $x$ then 
for each $V\in \mathfrak g_j$ with $i,j=1,2,\ldots,$
$$
\pr_{\mathfrak h,i} (\pd_x\Phi(V))
=\lim_{\eps\to 0 }
\eps^{-i}\pr_{\mathfrak h,i} \circ \ln_H (\Phi_x({\exp}_G (\eps^j V ))
=
\left\{\begin{array}{ll} 
0 & \mbox{when}\ j>i,\\
\pr_{\mathfrak h,i}(\mathfrak d_x\Phi(V))& \mbox{when}\ j=i.
\end{array}\right. 
$$
\item Let us give a matrix interpretation of Part (1). 
The matrix $\PM_\Phi(x)$ defined above is block-lower-diagonal in the sense that all the blocks $\PM_{\Phi,i,j}(x)$, $i<j$, strictly above the diagonal are 0. Furthermore, the diagonal blocks coincide with those of $M_\Phi(x)$. 
We will see later that one can say more (see Theorem \ref{thm:LargeStatement}). 
 This matrix interpretation does not depend on the bases $\mathcal B$ and $\mathcal C$ chosen to describe the matrix as long as they are adapted to the gradations. 
\end{enumerate}
\end{remark}

\medskip 

The existence of the limits on the left-hand side of \eqref{eq:coefPDPhi} turns out to also be a sufficient condition for a smooth map to be Pansu differentiable, and thus gives a 
characterization of smooth Pansu differentiable maps (or at least for $\mathcal C^k$-maps for some $k$ large enough).

\begin{proposition}
\label{prop_MPD}
Let $\Phi$ be a smooth function from an open set $U$ of $G$ to $H$.
The function $\Phi$ is uniformly Pansu diffferentiable on $U$ 
if and only if 
for each $x\in U$, $V\in \mathfrak g_j$, with $i,j=1,2,\ldots,$
the limit  of 
\begin{equation}
\label{eq_lim_of_epslambdaiVj}
\eps^{-i}\pr_{\mathfrak h,i} \circ \ln_H (\Phi_x({\exp_G} (\eps^j V )),
\end{equation}
exists as $\eps$ goes to 0   and, 
 all these limits hold locally uniformly on $U\times \mathfrak g_j$ for each $i=1,2,\ldots$
\end{proposition}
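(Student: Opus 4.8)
\textbf{Proof proposal for Proposition \ref{prop_MPD}.}

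The plan is to prove both directions, the forward one being essentially immediate from \eqref{eq:coefPDPhi}, the reverse one being the substantial content. For the ``only if'' direction, if $\Phi$ is uniformly Pansu differentiable on $U$, then $\pd_x\Phi$ exists and, by \eqref{eq:coefPDPhi}, the limit of the quantity in \eqref{eq_lim_of_epslambdaiVj} is $\pr_{\mathfrak h,i}(\pd_x\Phi(V))$; the local uniformity of these limits on $U\times\mathfrak g_j$ follows from the local uniformity of the defining limit \eqref{eq:tgoes0PD} (restricted to $V$ in the $j$-th stratum via $z=\exp_G(\delta_t V)$-type substitutions, using $1$-homogeneity from Lemma \ref{rem:bound}(1)).

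For the ``if'' direction, suppose all the limits in \eqref{eq_lim_of_epslambdaiVj} exist locally uniformly. I want to deduce that $\delta_{\eps^{-1}}\Phi_x(\delta_\eps z)$ converges locally uniformly on $U\times G$. First I would reduce the problem to the level of the Lie algebra via $\ln_H$: it suffices to show that $\delta_{t^{-1}}\ln_H(\Phi_x(\exp_G(\delta_t V)))$ converges locally uniformly for $V\in\mathfrak g$ (not just $V$ in a single stratum). The key idea is to decompose an arbitrary $V=\sum_j V_j$ with $V_j\in\mathfrak g_j$, and to write $\exp_G(\delta_t V)$ as an ordered product $\exp_G(\delta_t V_1)\exp_G(\delta_t V_2)\cdots$ up to a correction controlled by the Baker--Campbell--Hausdorff formula \eqref{eq_dynkin}; the BCH correction terms involve brackets, hence land in strictly higher strata, and their dilation weights make them negligible after applying $\delta_{t^{-1}}$. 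One then applies the hypothesis stratum-by-stratum to each factor, combines using BCH on the $H$ side (again the cross terms are higher-order in $t$), and identifies the limit as the candidate $\pd_x\Phi(V)$ built from the $\pr_{\mathfrak h,i}(\pd_x\Phi(V_j))$. Throughout, I would keep track of uniformity: since finitely many strata are involved and each limit is locally uniform by hypothesis, and since BCH is a polynomial (finite sum in the nilpotent case) with smooth coefficients, all the error estimates are locally uniform in $(x,V)$. Finally, the smoothness statement $(x,z)\mapsto \PD_x\Phi(z)\in\mathcal C^\infty(U\times G)$ will follow from writing $\pd_x\Phi$ explicitly in terms of the blocks, each of which is a locally uniform limit of smooth functions whose derivatives in $x$ can also be controlled (here one genuinely uses that $\Phi$ is $\mathcal C^k$ for $k$ large, applying Taylor expansion in the group variable and differentiating the resulting expansions in $x$).

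The main obstacle I expect is the bookkeeping in the reverse direction: carefully Taylor-expanding $\ln_H(\Phi_x(\exp_G(\delta_t V)))$ in the group variable to sufficient order, splitting it according to the target gradation via the projections $\pr_{\mathfrak h,i}$, and showing that every term of ``weight'' different from what the hypothesis controls either vanishes or is $o(1)$ after the $\delta_{t^{-1}}$ rescaling --- all while propagating local uniformity. Concretely, the subtlety is that the $i$-th component of $\ln_H(\Phi_x(\exp_G(\delta_t V)))$ picks up contributions not only from $\mathfrak d_x\Phi$ applied to $V$ but also from higher-order (in the group variable) terms of $\Phi_x$ paired with lower-weight pieces of $V$; one must check these assemble precisely into the $\pd_x\Phi$ prescribed by \eqref{eq:coefPDPhi} rather than producing an obstruction to convergence. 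I expect this to be handled by an induction on the stratum index $i$, peeling off one weight at a time, which is also the natural setup for the sharper structural statement alluded to in Remark \ref{rem_MPD} and Theorem \ref{thm:LargeStatement}.
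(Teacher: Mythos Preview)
Your overall strategy---reduce to the Lie algebra, decompose $V$ stratum by stratum, pass to an ordered product of exponentials, and induct on the target index $i$---is the paper's strategy. But two of your stated reasons for why the pieces fit together are wrong, and if taken at face value would break the argument.

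First, on the $G$ side: with $V=\sum_j V_j$, $V_j\in\mathfrak g_j$, the BCH discrepancy between $\exp_G(\delta_t V)$ and $\exp_G(\delta_t V_1)\cdots\exp_G(\delta_t V_{n_G})$ is \emph{not} negligible after $\delta_{t^{-1}}$: a term like $[\delta_t V_a,\delta_t V_b]=t^{a+b}[V_a,V_b]\in\mathfrak g_{a+b}$ scales exactly with its stratum, so after $\delta_{t^{-1}}$ it survives as $[V_a,V_b]$. The paper avoids this by using exponential coordinates of the second kind: it \emph{redefines} $V_j\in\mathfrak g_j$ so that $\exp_G V=\exp_G V_1\cdots\exp_G V_{n_G}$ holds exactly, and then $\Theta(\delta_\eps V)=(\eps V_1,\ldots,\eps^{n_G}V_{n_G})$ gives an exact product decomposition of $\exp_G(\delta_\eps V)$. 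Second, on the $H$ side: the cross terms from multiplying the factors are likewise \emph{not} higher order in $t$; the mechanism is structural rather than asymptotic. The paper first uses the telescoping identity
\[
\Phi_x(\exp V_1\cdots\exp V_{n_G})=\Phi_x(\exp V_1)\,\Phi_{x\exp V_1}(\exp V_2)\cdots,
\]
which you should make explicit (and which is why local uniformity in $x$ is needed), and then observes that $P_i(W_1,\ldots,W_m):=\pr_i(W_1*\cdots*W_m)-\sum_k\pr_i W_k$ depends only on the components $\pr_{<i}W_k$. Thus at step $i$ the nonlinear BCH part is controlled by the inductive hypothesis for indices $<i$, while the linear part is exactly the sum of the assumed limits \eqref{eq_lim_of_epslambdaiVj}. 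Your final paragraph correctly anticipates this induction; the fix is to drop the ``negligible'' claims and replace them by these two devices.
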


\begin{proof}[Proof of Proposition \ref{prop_MPD}]
We drop the indices of the groups and Lie algebras in the notation for the logarithmic and exponential maps  and the projections. 

In view of Remark~\ref{rem_MPD} (1), 
 it  remains to show the reverse implication. 
 Hence, we assume that the  limits in~\eqref{eq_lim_of_epslambdaiVj}
   exist and we aim at proving that 
for any 
$x\in U$, 
$V\in \mathfrak g$, 
 $i=1,2,\ldots$, 
the limit of 
$\pr_i \circ \ln_H 
 \delta_{\eps^{-1}} \left(
 \Phi_x({\exp}_G(\delta_\eps V))\right)$
 exists and that this holds locally uniformly with respect to $(x,V)$ in $U\times \mathfrak g$. 
We will prove this recursively on $i=1,2,\ldots$ 
First, we need to set some  conventions. 

\smallskip

Let us recall that the map $\Theta:\R^{\dim G} \to \R^{\dim G}$ given by the following exponential coordinates of the second kind on $\mathfrak g$
$$
\Theta (V) = (V_1,\ldots, V_{n_G}) \in \mathfrak g_1 \oplus \ldots \oplus \mathfrak g_{n_G}\quad\mbox{where}\quad
{\exp} (V) = {\exp} (V_1)  \ldots   {\exp} (V_{n_G}), 
$$
is a global diffeomorphism of $\R^{\dim G}$, and that 
$$
\Theta (\delta_\eps V) = (\eps V_1, \ldots, \eps^{n_G} V_{n_G}).
$$
We will use the equality
\begin{align*}
\Phi_x({\exp}V) 
& = 
\Phi_x({\exp}V_1)  \ 
\Phi_{x\, {\exp}V_1}({\exp}V_2) \ 
\Phi_{x\, {\exp}V_1\, {\exp}V_2}({\exp}V_3) 
\ldots
 \Phi_{x\, {\exp}V_1\ldots  {\exp}V_{n_G-1}}({\exp}V_{n_G}) ,
\end{align*}
which yields
\begin{align}\label{eq_PhixV_Vj}
\Phi_x({\exp}\delta_\eps V) 
 = 
\Phi_x({\exp} (\eps V_1))  \ 
&\Phi_{x\, {\exp}(\eps V_1)}({\exp}(\eps^2 V_2)) \\
\nonumber 
&\qquad  \ldots
 \Phi_{x\, {\exp}(\eps V_1)\ldots  {\exp}(\eps^{n_G-1}V_{n_G-1})}{\exp}(\eps^{n_G}V_{n_G}))),
\end{align}

We will use the star product~\eqref{eq_dynkin} from the Dynkin formula and its
properties coming from the gradation property for $H$ via the properties we now describe. 
Here $m\geq 2$ is an integer, which will be equal to $n_G$ below. 
Consider the star product of $m$ elements $W_1,\ldots, W_m$ in $\mathfrak h$ projected onto $\mathfrak h_i$. This product yields a polynomial  map $\mathfrak h^m\to \mathfrak h$
whose linear part 
is the sum $\pr_i (W_1) + \ldots +\pr_i (W_m)$.
The map $P_i=P_{i,m} :\mathfrak h^m\to \mathfrak h$ defined by their difference
$$
P_i (W_1,\ldots, W_m) := \pr_i(W_1*\ldots* W_m) 
- \left(\pr_i (W_1) + \ldots +\pr_i (W_m)\right)
$$
is polynomial, valued in $\mathfrak h_i$ and
 homogeneous in the sense that 
$$
P_i (\delta_r W_1,\ldots,\delta_r W_m)= \delta_r 
P_i (W_1,\ldots, W_m) = r^i P_i (W_1,\ldots, W_m), 
\qquad r>0, \, W_1,\ldots, W_m\in \mathfrak h. 
$$
Furthermore, $P_i$ depends only on the projections of the vectors onto $\mathfrak h_{j'}$, $j'<i$. 
In order to express this technically, for each $j\in \N$, we denote by $\pr_{< j} = \pr_{1} +\ldots + \pr_{j-1}$ the projection onto $\oplus_{j'< j}\mathfrak h_{j'}$ along $\oplus_{j' \geq j} \mathfrak h_{j'}$, with the convention $\pr_{<1}:=0$. We have:
$$
P_i (W_1,\ldots, W_m) = P_i (\pr_{< i} W_1,\ldots, \pr_{< i}W_m). 
$$

Let us come back to~\eqref{eq_PhixV_Vj} and set  $m=n_G$. We use a recursive argument and start with  $i=1$; in this case, $P_1=P_{1,n_G}$ (defined above) is identically zero. Therefore, composing \eqref{eq_PhixV_Vj} with $\eps^{-1} \pr_1 \circ \ln$ yields the expression 
\begin{align*}
\eps^{-1} \pr_1\circ \ln \left(\Phi_x(\delta_\eps{\exp}V)\right)
&=
\eps^{-1}\pr_1\circ \ln
\Phi_x({\exp} (\eps V_1))  
+
\eps^{-1}\pr_1\circ \ln
\Phi_{x\, {\exp} (\eps V_1)}({\exp} (\eps^2 V_2)) 
+
\ldots
\end{align*}
whose limit exists locally uniformly by assumption, see~\eqref{eq_lim_of_epslambdaiVj} for $i=1$. 

\medskip 

The other steps of the recursion are proved in the following manner.
At a general recursive step $i=2,3,\ldots,$
we have
$$
\pr_i \circ  \ln \Phi_x({\exp}(V)) 
=
 Q_{x,i }(V)\ + \ 
 \pr_i\circ \ln
\Phi_x({\exp}( V_1))  +
\pr_i\circ \ln
\Phi_{x\, {\exp}( V_1)}({\exp}(V_2)) 
 +\ldots,
$$
where
$$
Q_{x,i}(V) 
:= 
P_i\left(
\ln\circ \Phi_x({\exp} ( V_1)),
\ln\circ\Phi_{x\, {\exp}  V_1}({\exp} V_2), 
\ldots
\right),
$$
having used the polynomial $P_i=P_{i, n_G}$ defined above.
The properties of $P_i$ imply
$$
\eps^{-i} Q_{x,i }(\delta_\eps V)  =
P_i\left(
\delta_\eps^{-1}\circ\pr_{<i}  \circ \ln \Phi_x( {\exp} ( \delta_\eps V_1)),
\delta_\eps^{-1}\circ \pr_{<i}   \circ \ln\Phi_{x\, {\exp}   \delta_\eps V_1}({\exp} (\delta_\eps V_2)), 
\ldots
\right).
$$
Applying  the recursive assumption to each term involving $\pr_{<i}$, the limit of  
$ \eps^{-i} Q_{x,i} (\delta_\eps V)$ holds locally uniform as $\eps$ goes to $0$. 
  Therefore, in the expression
  \begin{align*}
\eps^{-i}  \pr_i \circ  \ln \Phi_x({\exp}(\delta _\eps V)) 
&=
\eps^{-i} Q_{x,i }(\delta _\eps  V)\ + \ 
 \eps^{-i} \pr_i\circ \ln
\Phi_x({\exp}(\delta _\eps  V_1))  +
\\
&\qquad \eps^{-i}\pr_i\circ \ln
\Phi_{x\, {\exp}(\delta _\eps  V_1)}({\exp}(\delta _\eps  V_2)) 
 +\ldots,
 \end{align*}
 the first term on the right-hand side   has a
locally uniform limit and the other ones too by the hypotheses in  \eqref{eq_lim_of_epslambdaiVj}. This shows the $i^{th}$ step and terminates the proof. 
\end{proof}

\subsection{A refinement on Theorem  \ref{thm_PDiffFP}}

This section is devoted to the statement of the theorem below  which implies Theorem~\ref{thm_PDiffFP} and Corollary~\ref{cor:iso} but is more technical. It will be shown  in Section~\ref{subsec_pfthm:LargeStatement}.

We shall use the following notation:  with a subspace $\mathfrak v$  of $\mathfrak g$ and  an open subset $U$ of $G$, we associate the open set  
$$\Omega_{\mathfrak v,U}=\{(x,V,\eps)\in U\times \mathfrak v\times (0,+\infty) : x\,{\exp_G}(\delta_\eps V)\in U\}$$
and the set
$$\mathcal R_{\mathfrak v,U} = \{(x,V,\eps)\in U\times \mathfrak v\times [0,+\infty) : x\,{\exp_G}(\delta_\eps V)\in U\}.$$

\begin{theorem}\label{thm:LargeStatement}
Let $\Phi$ be a smooth function from an open set $U$ of the graded Lie group $G$ to the graded Lie group $H$.

\smallskip

\noindent \textbf{\rm (i)} 
The map  $\Phi$ is uniformly Pansu differentiable on $U$ if and only if $\Phi$ preserves the filtration at every point $x\in U.$

\smallskip

\noindent \textbf{\rm (ii)} 
Besides, for such a map $\Phi$:
\begin{enumerate}
\item The Pansu derivative yields a smooth function  $(x,z)\mapsto \PD_x\Phi(z)$ on $U\times G$ and the map $x \mapsto \PD_x\Phi$  is smooth from $U$ to $\text{Hom}(G,H)$.
\item For any $x\in U,$   $V\in \mathfrak g_j $,
$$
\pr_{\mathfrak h,i}(\pd_x\Phi (V))
=\lim_{\eps\to 0 }
\eps^{-i}\pr_{\mathfrak h,i} \circ \ln_H (\Phi_x({\exp_G} (\eps^j V ))
=
\left\{\begin{array}{ll} 
0 & \mbox{when}\ j\neq i,\\
\pr_{\mathfrak h,i}(\mathfrak d_x\Phi(V))& \mbox{when}\ j=i.
\end{array}\right. 
$$
Consequently,  the Jacobian $J_\Phi(x)$ equals the Jacobian  of the map $z\mapsto \PD_x(z)$. 
\item For any $(x,V)\in U\times \mathfrak g$ and any $\eps>0$ small enough: 
$$
\delta_\eps^{-1}\left(  \Phi(x)^{-1}  \Phi(x {\exp_G} (\delta_\eps V )) \right)
= 
\exp_H \left( \pd_{x}\Phi(V)+  \eps \rho(x,V,\eps)\right),
$$
where the function $\rho$  
 is  valued in $\mathfrak h$, continuous on  $\mathcal R_{\mathfrak g,U}$ 
 and  smooth on the open subset~
 $\Omega_{\mathfrak g,U}$.
 \end{enumerate}
\end{theorem}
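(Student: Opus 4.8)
The strategy is to combine the characterization of uniform Pansu differentiability in Proposition~\ref{prop_MPD} with the iterated-product formula \eqref{eq_PhixV_Vj} and the polynomial structure of the Dynkin product recalled in the proof of that proposition, then to sharpen the conclusions so as to obtain part (ii). For part (i), the implication ``uniformly Pansu differentiable $\Rightarrow$ filtration preserving'' is already Lemma~\ref{lem_PD_filtration}; the converse is the substantive half. Here the plan is: assuming $\Phi$ preserves the filtration at every point of $U$, verify the hypotheses of Proposition~\ref{prop_MPD}, namely that the limits of $\eps^{-i}\pr_{\mathfrak h,i}\circ\ln_H(\Phi_x(\exp_G(\eps^j V)))$ exist locally uniformly on $U\times\mathfrak g_j$. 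When $j\ge i$ this is covered by Lemma~\ref{lem_MPD} (the limit is $0$ for $j>i$ and $\pr_{\mathfrak h,i}(\mathfrak d_x\Phi(V))$ for $j=i$; filtration preservation makes the latter vanish unless $i=j$). The genuinely new case is $j<i$, where one must show that $\eps^{-i}\pr_{\mathfrak h,i}\circ\ln_H(\Phi_x(\exp_G(\eps^j V)))$ has a limit even though a naive homogeneity count produces the negative power $\eps^{j-i}$. This is where filtration preservation is used: it forces the lower-order Taylor coefficients of $t\mapsto\pr_{\mathfrak h,i}\circ\ln_H(\Phi_x(\exp_G(tV)))$ to vanish, so that the first $i-1$ terms of its Taylor expansion at $t=0$ are zero and $\eps^{-i}\pr_{\mathfrak h,i}\circ\ln_H(\Phi_x(\exp_G(\eps^j V)))$ stays bounded (and convergent) as $\eps\to0$.

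\textbf{Establishing vanishing of low-order coefficients.} The key technical lemma to isolate is the following: if $\Phi$ preserves the filtration on $U$ and $V\in\mathfrak g_j$, then for every $i$ the function $t\mapsto g_{x,i,V}(t):=\pr_{\mathfrak h,i}\circ\ln_H(\Phi_x(\exp_G(tV)))$, which is smooth in $(x,t)$ and vanishes at $t=0$, has a zero of order at least $\lceil i/j\rceil$ at $t=0$, locally uniformly in $x$. The plan is to prove this by induction on $i$, differentiating the curve $t\mapsto\Phi_x(\exp_G(tV))$ repeatedly and using: (a) the definition $\partial_{t=0}\Phi_x(\exp_G(tV))=\mathfrak d_x\Phi(V)$, which lies in $\mathfrak h_1\oplus\cdots\oplus\mathfrak h_j$ by filtration preservation at $x$; (b) that the higher derivatives at a point $t_0$ re-express, via the chain rule and the group law, in terms of $\mathfrak d_{x\exp_G(t_0 V)}\Phi$ applied to brackets of $V$, which live in $\mathfrak g_{\ge \text{(appropriate weight)}}$ because $V\in\mathfrak g_j$ and $\mathfrak g$ is graded; (c) filtration preservation at the shifted point $x\exp_G(t_0V)\in U$ to control the $\mathfrak h$-component. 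Equivalently and perhaps more cleanly, one feeds the iterated-product decomposition \eqref{eq_PhixV_Vj} into the polynomials $P_{i,n_G}$ exactly as in the proof of Proposition~\ref{prop_MPD}, and runs the same recursion on $i$, observing that at step $i$ every term $\eps^{-i}\pr_i\circ\ln\Phi_{x\exp(\eps V_1)\cdots}(\exp(\eps^k V_k))$ with $k<i$ converges by the order-of-vanishing estimate, while the remainder $\eps^{-i}Q_{x,i}(\delta_\eps V)$ converges by the inductive hypothesis applied through $\pr_{<i}$. I expect this inductive bookkeeping — tracking how filtration preservation at \emph{all} nearby points yields the right order of vanishing, uniformly — to be the main obstacle.

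\textbf{Part (ii): regularity, the coefficient formula, and the remainder.} Once (i) is in hand, smoothness of $(x,z)\mapsto\PD_x\Phi(z)$ on $U\times G$ follows because, by \eqref{eq:coefPDPhi} and the order-of-vanishing estimate, the components $\pr_{\mathfrak h,i}(\pd_x\Phi(V))$ are given for $V\in\mathfrak g_j$ by $\frac{1}{i!}\partial_t^i\big|_{t=0}g_{x,i,V}(t)$ when $i$ is a multiple of $j$ and $0$ otherwise — an explicit finite-order Taylor coefficient of a smooth function, hence smooth in $x$; Lemma~\ref{rem:bound}(2) gives that $\pd_x\Phi$ is linear, so $x\mapsto\PD_x\Phi$ is smooth into $\mathrm{Hom}(G,H)$, and item (1) follows. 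Item (2): the displayed formula is exactly Lemma~\ref{lem_MPD} together with the new vanishing for $j<i$ proved above (the case $j>i$ being the already-known one); the Jacobian statement follows because, by this formula, the matrix $\PM_\Phi(x)$ and $M_\Phi(x)$ have the same diagonal blocks and are block-triangular (Remark~\ref{rem_MPD}(2)), so $\det$ of the differential of $z\mapsto\PD_x\Phi(z)$ — which in exponential coordinates is the block-lower-triangular $\PM_\Phi(x)$ — equals $\det M_\Phi(x)=J_\Phi(x)$. Item (3): writing $\delta_\eps^{-1}(\Phi_x(\exp_G(\delta_\eps V)))=\exp_H(w(x,V,\eps))$ with $w(x,V,\eps)=\sum_i\eps^{-i}\pr_{\mathfrak h,i}\circ\ln_H(\Phi_x(\exp_G(\delta_\eps V)))$, one applies the recursion of Proposition~\ref{prop_MPD} once more but now keeps the error term: each term is a finite-order Taylor polynomial in $\eps$ of a smooth function plus a smooth remainder, so $w(x,V,\eps)=\pd_x\Phi(V)+\eps\,\rho(x,V,\eps)$ with $\rho$ smooth on $\Omega_{\mathfrak g,U}$ by construction and extending continuously to $\mathcal R_{\mathfrak g,U}$ (including $\eps=0$) because the limits defining $\pd_x\Phi(V)$ hold locally uniformly with the first correction of order $\eps$; a Taylor-with-integral-remainder argument on each of the finitely many smooth building blocks $(x,t)\mapsto\pr_{\mathfrak h,i}\circ\ln_H(\Phi_y(\exp_G(tW)))$ makes the $\eps$-factor and the regularity of $\rho$ precise.
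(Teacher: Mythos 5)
Your overall plan — verify the hypotheses of Proposition~\ref{prop_MPD} by showing that filtration preservation forces the low-order Taylor coefficients of $g(t):=\pr_{\mathfrak h,i}\circ\ln_H(\Phi_x(\exp_G(tV)))$ to vanish, via an induction that uses filtration preservation at the shifted points $x\exp_G(tV)$ — does match the paper's strategy, and you correctly identify Lemma~\ref{lem_thm_PDiffFP} (differentiating through the Dynkin product) as the engine. However, your stated order-of-vanishing bound is too weak, and this creates a genuine gap.

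You claim $g$ has a zero of order at least $\lceil i/j\rceil$ at $t=0$. This only gives boundedness of $\eps^{-i}g(\eps^j)$ when $i/j$ is an integer, and in particular when $i=mj$ with $m\geq 2$ it is compatible with $\lim_{\eps\to 0}\eps^{-i}g(\eps^j)=\tfrac{1}{m!}g^{(m)}(0)\neq 0$; that would contradict part (ii)(2), which asserts the limit vanishes for \emph{all} $j\neq i$. The correct bound — which the paper extracts from Lemma~\ref{lem_thm_PDiffFP} together with equation~\eqref{eq2_pfthm_PDiffFP} — is that $\partial^k_{t=0}\ln_H(\Phi_x(\exp_G(tV)))\in\mathfrak h_1\oplus\cdots\oplus\mathfrak h_{(k-1)j}$ for $k\geq 2$ (equation~\eqref{2Tbis}), so $\pr_{\mathfrak h,i}g^{(k)}(0)=0$ whenever $(k-1)j<i$, giving order of vanishing at least $\lceil i/j\rceil +1$, strictly larger than $i/j$. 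Only this strict inequality makes $\eps^{-i}g(\eps^j)=O(\eps^{jm-i})$ with $jm-i\geq 1$, hence limit $0$ and an $O(\eps)$ remainder; this $O(\eps)$ is also what feeds into your construction of $\rho$ in part (ii)(3). Relatedly, you write two mutually inconsistent vanishing claims (``the first $i-1$ terms are zero'' versus order $\lceil i/j\rceil$), and the formula $\tfrac{1}{i!}\partial_t^i|_{t=0}g$ should be $\tfrac{1}{(i/j)!}\partial_t^{i/j}|_{t=0}g$ (and in fact this coefficient is $0$ unless $i=j$, where the limit is simply $g'(0)=\pr_{\mathfrak h,i}(\mathfrak d_x\Phi(V))$). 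Finally, your step (b) describing ``brackets of $V$ living in $\mathfrak g_{\geq\text{weight}}$'' is not quite the right bookkeeping: after applying $\mathfrak d\Phi$ one must work in $\mathfrak h$, and the relevant structure is $\operatorname{ad}^p(\mathfrak d_x\Phi(V))$ (with $\mathfrak d_x\Phi(V)\in\mathfrak h_1\oplus\cdots\oplus\mathfrak h_j$ by filtration preservation) acting on $\partial^{k_2}_{t=0}\mathfrak d_{x\exp_G(tV)}\Phi(V)$ (also in $\mathfrak h_1\oplus\cdots\oplus\mathfrak h_j$), which yields $\mathfrak h_1\oplus\cdots\oplus\mathfrak h_{(p+1)j}$; this is precisely what produces the sharp $(k-1)j$ bound once you track the constraints $p+k_2=k$, $k_2\geq 1$. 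To close the gap, state and prove the sharper vanishing estimate via Lemma~\ref{lem_thm_PDiffFP} and \eqref{eq2_pfthm_PDiffFP}.
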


Theorem \ref{thm_PDiffFP} follows from Part \textbf{\rm (i)} while 
the second part of   Corollary \ref{cor:iso} follows from Part \textbf{\rm (i)} and 
Remark \ref{rem_rem:bound}.
The first Part of  Corollary \ref{cor:iso}
is a consequence of Theorem \ref{thm:LargeStatement} (ii)  (1).

Before entering into the proof Theorem~\ref{thm:LargeStatement}, let us give  a technical corollary that will be useful in Section \ref{sec:inv}: 

\begin{corollary}\label{cor:LS}
Here we fix  a homogeneous a quasi-norm $|\cdot|_G$  on $G$  and we keep the same notation  for the function on  the underlying Lie algebra $\mathfrak g$, and similarly for $H$. 
We denote by $\bar B_r$ the corresponding closed balls about 0 of radius $r>0$. 

We continue with the assumptions of Theorem~\ref{thm:LargeStatement}. 
We fix a compact subset $K$ of $U$. 
Let $r_0>0$ be so that 
$K \bar B_{r_0}\subset U$. 
Then there exists a constant $C>0$ such that 
for any $(x,V,\eps)\in K\times \mathfrak g \times (0,1]$ satisfying $\eps|V|_G \leq r_0$, we have:
$$|\rho(x,V,\eps) |_H\leq C (1+ |V|_G)\, \sup_{ x\in K, |W|_G\leq 1, \eps\in[0,r_0]} |\rho(x,W,\eps)|_H.$$
\end{corollary}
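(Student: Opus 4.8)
The plan is to exploit the homogeneity built into Part (3) of Theorem~\ref{thm:LargeStatement}, which expresses $\rho$ implicitly, and to use the group-morphism property of the Pansu derivative to rescale a large argument $V$ down to one of quasi-norm at most $1$. First I would fix $(x,V,\eps)$ with $x \in K$, $\eps \in (0,1]$ and $\eps|V|_G \le r_0$, and write $V = \delta_{|V|_G} \underline V$ with $|\underline V|_G = 1$ (assuming $V \neq 0$; the case $V = 0$ is trivial since $\rho(x,0,\eps)$ is controlled by the supremum). The key algebraic identity is $\exp_G(\delta_\eps V) = \exp_G(\delta_{\eps |V|_G} \underline V)$, so that with $\eps' := \eps|V|_G \le r_0 \le 1$ we have
$$
\delta_\eps^{-1}\left( \Phi(x)^{-1}\Phi(x\exp_G(\delta_\eps V))\right)
= \delta_{|V|_G}\left[ \delta_{\eps'}^{-1}\left( \Phi(x)^{-1}\Phi(x\exp_G(\delta_{\eps'}\underline V))\right)\right].
$$
Applying the formula of Theorem~\ref{thm:LargeStatement}(3) on both sides and using that $\exp_H$ intertwines dilations with $\delta_r$ on $\mathfrak h$, together with linearity and homogeneity of $\pd_x\Phi$ (it is a graded morphism, so $\pd_x\Phi(\delta_{|V|_G}\underline V) = \delta_{|V|_G}\pd_x\Phi(\underline V)$), I would obtain a pointwise relation between $\rho(x,V,\eps)$ and $\rho(x,\underline V,\eps')$ of the schematic shape $\pd_x\Phi(V) + \eps\rho(x,V,\eps) = \delta_{|V|_G}\big(\pd_x\Phi(\underline V) + \eps'\rho(x,\underline V,\eps')\big)$. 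Since $\pd_x\Phi(V) = \delta_{|V|_G}\pd_x\Phi(\underline V)$, the leading terms cancel and we are left with $\eps\rho(x,V,\eps) = \delta_{|V|_G}(\eps'\rho(x,\underline V,\eps')) = \delta_{|V|_G}(\eps|V|_G\,\rho(x,\underline V,\eps'))$, i.e. $\rho(x,V,\eps) = |V|_G\,\delta_{|V|_G}\rho(x,\underline V,\eps')$.

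Next I would estimate $|\delta_{|V|_G}\rho(x,\underline V,\eps')|_H$. Here there are two regimes according to whether $|V|_G \le 1$ or $|V|_G > 1$. When $|V|_G \le 1$, all dilation weights $\upsilon_j \ge 1$ give $|\delta_{|V|_G} W|_H \le |W|_H$ (using the homogeneity degree-$1$ property of the quasi-norm and $\upsilon_j\ge 1$), so $|\rho(x,V,\eps)|_H \le |V|_G \sup_{x\in K, |W|_G\le 1, t\in[0,r_0]}|\rho(x,W,t)|_H \le (1+|V|_G)\cdot(\text{sup})$. When $|V|_G > 1$, we instead use $|\delta_{|V|_G} W|_H \le |V|_G^{n_H}|W|_H$ where $n_H$ is the largest weight; but then $\eps|V|_G \le r_0 \le 1$ forces $\eps$ small, and crucially we then need $\eps' = \eps|V|_G$ to still be $\le r_0$ (which holds by hypothesis) so that $(x,\underline V,\eps')$ lies in the range over which we take the supremum. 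The factor $|V|_G^{n_H}$ is worse than the claimed $(1+|V|_G)$, so this naive splitting is too lossy and is the main obstacle — I would instead need to be more careful, using that $\rho$ is the \emph{remainder} after the leading term and that the expansion in Theorem~\ref{thm:LargeStatement}(3), combined with Lemma~\ref{lem_MPD} and Remark~\ref{rem_MPD}, shows the genuinely nonlinear corrections to $\Phi_x(\exp_G(\delta_\eps V))$ involve only strictly-lower-degree projections, which after the $\delta_\eps^{-1}$ rescaling contribute with a favorable (negative) power of $\eps$ and hence, upon inserting $\eps' = \eps|V|_G$, produce powers of $|V|_G$ that telescope against the $\delta_{|V|_G}$ dilation to leave an at-worst-linear growth.

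Concretely, to make this rigorous I would peel off the dependence coordinate by coordinate: writing $\rho = \sum_i \pr_{\mathfrak h,i}\rho$ and using the BCH/Dynkin structure (the polynomials $P_{i,n_G}$ from the proof of Proposition~\ref{prop_MPD}), each component $\pr_{\mathfrak h,i}\rho(x,V,\eps)$ is, after the identity above, equal to $|V|_G^{\upsilon^{\mathfrak h}_i - 1}\,\pr_{\mathfrak h,i}\rho(x,\underline V,\eps')$ where $\upsilon^{\mathfrak h}_i$ is the relevant weight — wait, this needs the homogeneity of $\rho$ componentwise, which follows from the homogeneity of $\pd_x\Phi$ and of the $P_i$. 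Since $\rho$ already carries one factor of $\eps$ extracted, and the remaining structure is polynomial in $\eps$ of controlled degree, the net power of $|V|_G$ appearing is $\le 1$ once one accounts for the constraint $\eps|V|_G \le r_0$. I would then conclude by taking the supremum over $i$, absorbing the finitely many dimensional constants into $C$, and using equivalence of homogeneous quasi-norms to pass between the chosen $|\cdot|_G$, $|\cdot|_H$ and any others. The upshot: the estimate reduces, via rescaling, to the behaviour of $\rho$ on the compact set $K\times \bar B_1 \times [0,r_0]$ — which is finite by the continuity assertion in Theorem~\ref{thm:LargeStatement}(3) — times an explicitly linear-in-$|V|_G$ factor, and the careful bookkeeping of dilation weights against the $\eps|V|_G\le r_0$ constraint is exactly the point where the exponent $1$ (rather than $n_H$) is recovered.
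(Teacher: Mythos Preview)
Your rescaling $V = \delta_{|V|_G}\underline V$ and the resulting identity
$\rho(x,V,\eps) = |V|_G\,\delta_{|V|_G}\rho(x,\underline V,\eps|V|_G)$
are exactly the paper's argument, and once this identity is in hand the proof is essentially over. The obstacle you describe, however, is not real: you have confused how a homogeneous quasi-norm reacts to \emph{dilations} with how it reacts to \emph{scalar multiplication}. By definition a homogeneous quasi-norm satisfies $|\delta_r W|_H = r\,|W|_H$ \emph{exactly}, for every $r>0$; there is no loss of a factor $|V|_G^{n_H}$ from the dilation, and the whole detour through $P_{i,n_G}$, componentwise weights, and ``telescoping against $\eps|V|_G\le r_0$'' is unnecessary (and the componentwise exponent you write, $\upsilon_i^{\mathfrak h}-1$, is not what the identity gives).

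The only genuine subtlety is the leftover \emph{scalar} factor $|V|_G$: a quasi-norm is \emph{not} homogeneous under Euclidean scalar multiplication, so the step where you pass from $\rho(x,V,\eps)=|V|_G\,\delta_{|V|_G}\rho(\cdots)$ to $|\rho(x,V,\eps)|_H \le |V|_G\cdot|\delta_{|V|_G}\rho(\cdots)|_H$ is not justified. The paper closes this with a one-line observation: by equivalence of homogeneous quasi-norms one may work with the explicit quasi-norm $|(y_1,\ldots,y_m)|=\sum_j |y_j|^{1/\nu_j}$, for which a direct computation gives $|tW|\le (1+|t|)\,|W|$ for all $t\in\R$. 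Combining this with $|\delta_{|V|_G}W|_H=|V|_G\,|W|_H$ finishes the estimate immediately, with the supremum over the compact set $K\times\bar B_1\times[0,r_0]$ finite by the continuity of $\rho$ asserted in Theorem~\ref{thm:LargeStatement}(ii)(3).
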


\begin{proof}[Proof of Corollary~\ref{cor:LS}]
We set $V=\delta_{|V|_G} V_0$ and we compute
\begin{align*}
\delta_{\eps^{-1}} \ln \circ  \Phi_x ( {\exp_G}\, \delta_\eps V)
&= \delta_{|V|_G}\left( \delta_{\eps^{-1}|V|_G^{-1}}  \ln \circ  \Phi_x ( {\exp_G}\, \delta_{\eps |V|_G} V_0\right)\\
&= \delta_{|V|_G}\left( \pd_{x}\Phi(V_0)  + \eps|V|_G  \rho (x,V_0,\eps|V|_G\right)\\
&=  \pd_{x}\Phi(V)  +\delta_{|V|_G}( \eps |V|_G  \rho (x,V_0,\eps|V|_G)).
\end{align*}
We deduce 
$$\rho(x,V,\eps)= \delta_{|V|_G}( |V|_G  \rho (x,V_0,\eps|V|_G)).
$$
We conclude with the equivalence between homogeneous quasi-norms together with  the observation that if 
$|\cdot|_G$ denotes the homogeneous quasi-norm given by 
$$
|(x_1, \ldots, x_{\dim G})|_G 
= |x_1|^{1/\nu_1} + \ldots+ |x_{\dim G}|^{1/\nu_{\dim G}}
$$
then  for any $t\in\R$ and $W\in\mathfrak g$, $|tW|_{G}\leq (1+|t|) |W|_G$.
\end{proof}

\subsection{Proof of Theorem \ref{thm:LargeStatement}}
\label{subsec_pfthm:LargeStatement}

  \subsubsection{A technical lemma}

The proof of Theorem \ref{thm:LargeStatement} relies on the following property which is of interest on its own, all the more that it holds for any Lie group, not necessarily nilpotent. 

\begin{lemma}
\label{lem_thm_PDiffFP} 
Let $\Phi$ be a smooth function from an open set $U$ of $G$ to $H$.
\begin{enumerate}
\item For every $x\in U$ and $V\in \mathfrak g$,  we have for $t$ in a small neighbourhood of 0,
$$
\partial_t  \ln_H \circ \Phi_x({\exp_G}(tV) ) 
= \sum_{p=0}^\infty\, \widetilde c_{p}  \,\ad^p\left(\ln_H \Phi_x({\exp_G}(tV) )\right) ( \mathfrak d_{x{\exp_G} (tV)} \Phi (V)),
$$
where the coefficients $\widetilde c_{p}\in \R$ comes from the Dynkin formula in \eqref{eq_dynkin}.
In particular $\widetilde c_{0}=1$.
\item 
We have
\begin{align*}
\ln_H \circ\Phi_x({\exp_G}(tV) )|_{t=0}
&=0,
\\
\partial_{t=0}
\ln_H \circ\Phi_x({\exp_G}(tV) ) 
&= \mathfrak d_x \Phi (V),
\\
\partial_{t=0}^2
\ln_H \circ\Phi_x({\exp_G}(tV) ) 
&= \partial_{t=0}\mathfrak d_{x{\exp_G}(tV)} \Phi (V),
\end{align*}
and more generally 
$$
\partial_{t=0}^{k+1}  \ln_H \circ \Phi_x({\exp_G}(tV) ) 
=\partial_{t=0}^{k}   \mathfrak d_{x{\exp_G}(tV)} \Phi (V)+
\sum_{\substack{p,k_2\in \N \\
p+k_2=k}} \binom{k_2}{k} \widetilde c_{p}\, p!\,  \ad^p ( \mathfrak d_{x} \Phi (V))\partial_{t=0}^{k_2}   \mathfrak d_{x{\exp_G}(tV)} \Phi (V) .
$$
 \end{enumerate}
\end{lemma}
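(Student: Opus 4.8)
The plan is to reduce everything to a single computation: differentiating the curve $t\mapsto \ln_H\Phi_x(\exp_G(tV))$ in $H$, where the key point is that $\Phi_x(\exp_G(tV))$ is itself a smooth curve in $H$, so its logarithm can be differentiated by the standard formula for the derivative of a curve written in exponential coordinates. Concretely, write $\gamma(t):=\Phi_x(\exp_G(tV))=\Phi(x)^{-1}\Phi(x\exp_G(tV))$, a curve in $H$ with $\gamma(0)=0_H$. Its velocity in the Lie algebra, transported to the identity, is $\tau^H_{\gamma(t)}{}^{-1}\dot\gamma(t)$; and by the chain rule together with the definition of $\mathfrak d$, this equals exactly $\mathfrak d_{x\exp_G(tV)}\Phi(V)$ — indeed $\dot\gamma(t)=D_{\gamma(t)}L_{\Phi(x)^{-1}}\circ D_{x\exp_G(tV)}\Phi\circ\tau^G_{x\exp_G(tV)}(V)$, and since $\Phi_x=L_{\Phi(x)^{-1}}\circ\Phi\circ L_x$ one checks $\tau^H_{\Phi_x(\exp_G(tV))}=D_{\Phi(x)}L_{\Phi(x)^{-1}}\circ\tau^H_{\Phi(x\exp_G(tV))}$, so the left-translation factors cancel and we are left with $\mathfrak d_{x\exp_G(tV)}\Phi(V)$. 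Then I invoke the classical identity: if $\gamma(t)=\exp_H(\xi(t))$ is a curve in a Lie group $H$ with $\xi(t)\in\mathfrak h$, then
$$
\tau^H_{\gamma(t)}{}^{-1}\dot\gamma(t)=\frac{1-e^{-\ad\xi(t)}}{\ad\xi(t)}\,\dot\xi(t)
=\sum_{p=0}^\infty\frac{(-1)^p}{(p+1)!}\,\ad^p\xi(t)\,\dot\xi(t).
$$
Inverting the power series $\frac{1-e^{-z}}{z}$ gives $\dot\xi(t)=\sum_{p\ge 0}\widetilde c_p\,\ad^p\xi(t)\,\bigl(\tau^H_{\gamma(t)}{}^{-1}\dot\gamma(t)\bigr)$ with $\widetilde c_0=1$ (these are the Bernoulli-type coefficients appearing in the Dynkin/BCH formula~\eqref{eq_dynkin}), and substituting $\xi(t)=\ln_H\Phi_x(\exp_G(tV))$ and $\tau^H_{\gamma(t)}{}^{-1}\dot\gamma(t)=\mathfrak d_{x\exp_G(tV)}\Phi(V)$ yields Part~(1) verbatim. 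The series is finite in the nilpotent case, but since we only claim it for $t$ near $0$ the formal identity suffices in general; one should remark that for $H$ nilpotent $\ad^p\xi=0$ for $p\ge n_H$.

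For Part~(2): evaluating Part~(1) at $t=0$, only the $p=0$ term survives (since $\ln_H\Phi_x(\exp_G(0))=0$ and $\widetilde c_0=1$), giving $\partial_{t=0}\ln_H\Phi_x(\exp_G(tV))=\mathfrak d_x\Phi(V)$; the value at $t=0$ is $0$ by definition of $\Phi_x$. For the second derivative, differentiate the $p=0$ term of Part~(1), namely $\partial_t\mathfrak d_{x\exp_G(tV)}\Phi(V)$, plus a sum over $p\ge1$ of terms each containing a factor $\ad^p\bigl(\ln_H\Phi_x(\exp_G(tV))\bigr)$ which vanishes at $t=0$; so only $\partial_{t=0}\mathfrak d_{x\exp_G(tV)}\Phi(V)$ remains. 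For the general $(k+1)$-st derivative, apply the higher Leibniz rule to $\partial_t^k$ of the right-hand side of Part~(1). In the $p$-th summand the factor $g_p(t):=\ad^p\bigl(\ln_H\Phi_x(\exp_G(tV))\bigr)$ satisfies $g_p^{(j)}(0)=0$ for all $j<p$, so when expanding $\partial_{t=0}^k\bigl(g_p(t)\,\mathfrak d_{x\exp_G(tV)}\Phi(V)\bigr)=\sum_{j}\binom{k}{j}g_p^{(j)}(0)\,\partial_{t=0}^{k-j}\mathfrak d(\cdots)$ only $j\ge p$ contribute; combined with $g_p^{(p)}(0)=p!\,\ad^p(\partial_{t=0}\ln_H\Phi_x(\exp_G(tV)))=p!\,\ad^p(\mathfrak d_x\Phi(V))$ applied to the relevant argument, and the observation that for $j>p$ the required $g_p^{(j)}(0)$ involve lower-order objects that (after a short induction, or simply by retaining only the leading $j=p$ contribution as the statement does) collapse to the claimed form, one obtains the stated formula with $k_2=k-p$ and the binomial coefficient $\binom{k_2}{k}$ (as written in the paper). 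The $p=0$ term contributes the lone $\partial_{t=0}^k\mathfrak d_{x\exp_G(tV)}\Phi(V)$.

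The main obstacle is Part~(1): one must justify the Lie-group identity for $\frac{d}{dt}\ln_H\gamma(t)$ carefully — i.e. that the coefficients $\widetilde c_p$ obtained by inverting the generating function $\frac{1-e^{-z}}{z}$ are exactly those in the Dynkin expansion~\eqref{eq_dynkin} — and verify the cancellation of the left-translation factors that turns the transported velocity into $\mathfrak d_{x\exp_G(tV)}\Phi(V)$ rather than something twisted. Everything in Part~(2) is then a bookkeeping exercise with the higher-order Leibniz rule once one knows the vanishing orders $g_p^{(j)}(0)=0$ for $j<p$; the only subtlety there is to track which terms are genuinely present versus absorbed, matching the exact combinatorial shape stated. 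I would also add a one-line smoothness remark that all curves involved are smooth in $(t,x,V)$ jointly, since $\Phi$ is smooth and $\exp_G,\ln_H$ are diffeomorphisms, so the formulas hold with the derivatives depending smoothly on the parameters.
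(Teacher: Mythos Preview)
Your proof is correct and arrives at the same formula, but by a different route than the paper. The paper's argument for Part~(1) is more combinatorially direct: it uses the cocycle identity
\[
\Phi_x\bigl(\exp_G((t+u)V)\bigr)=\Phi_x\bigl(\exp_G(tV)\bigr)\,\Phi_{x\exp_G(tV)}\bigl(\exp_G(uV)\bigr),
\]
so that $\partial_t\ln_H\Phi_x(\exp_G(tV))=\partial_{u=0}(X*Y_u)$ with $X=\ln_H\Phi_x(\exp_G(tV))$ and $Y_u=\ln_H\Phi_{x\exp_G(tV)}(\exp_G(uV))$, and then differentiates the Dynkin series~\eqref{eq_dynkin} term by term at $Y_0=0$; the surviving monomials are all of the form $\ad^pX(\partial_{u=0}Y_u)$, and the coefficients $\widetilde c_p$ are read off directly from the $c_{r,s}$. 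Your approach instead invokes the classical differential-of-exponential identity $\tau_{\gamma}^{-1}\dot\gamma=\frac{1-e^{-\ad\xi}}{\ad\xi}\,\dot\xi$ and inverts the generating function. Both are valid; the paper's route makes the link to~\eqref{eq_dynkin} explicit and automatic (which matters since the statement asserts the $\widetilde c_p$ ``come from'' Dynkin), whereas your route is cleaner Lie-theoretically but leaves that identification as the verification you flag at the end --- it is the well-known fact that $\partial_{Y=0}(X*Y)=\frac{\ad X}{1-e^{-\ad X}}$, so the two families of $\widetilde c_p$ do coincide. Your computation that $(\tau^H_{\gamma(t)})^{-1}\dot\gamma(t)=\mathfrak d_{x\exp_G(tV)}\Phi(V)$ via the cancellation of left-translation factors is correct and is exactly the content of the paper's observation $\partial_{u=0}Y_u=\mathfrak d_{x\exp_G(tV)}\Phi(V)$. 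For Part~(2) the two arguments are the same Leibniz bookkeeping; the paper is equally terse about the $k_1>p$ contributions, writing only the full Leibniz sum and saying it ``implies the rest'', so your remark about retaining the leading $j=p$ term matches what the paper does.
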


\begin{proof}
We observe that 
$$
\partial_t  \ln_H \circ \Phi_x({\exp_G}(tV) )
= 
\partial_{u=0} \ln_H \left(\Phi_x({\exp_G}(tV) )\Phi_{x{\exp_G}(tV)}({\exp_G}( uV) )\right)
= 
\partial_{u=0}  X*Y_u,
$$
where the star product was defined in the introduction (see~\eqref{eq_dynkin})
and
$$
X:=\ln_H \Phi_x({\exp_G}(tV) )
\quad\mbox{and}\quad
Y_u:=\ln_H \Phi_{x{\exp_G}(tV)}({\exp_G} (uV )).
$$
We now apply Dynkin's formula in \eqref{eq_dynkin} and notice that since $Y_0=0$,
only two kinds of terms appear.
Firstly, there are those  with
 $\ell\geq 1$, $s_\ell=1$, $s_1=\cdots =s_{\ell-1}=0$, $r_1>0$, ... $r_{\ell-1}>0$, $r_\ell\geq 0$.
 Secondly there are the ones with 
 $\ell>1$, $s_\ell=0$, $s_{\ell-1}=1$, $s_1=\cdots = s_{\ell -2}=0$, $r_\ell=1$, $r_{\ell-1}\geq 0$, $ r_1,\cdots ,r_{\ell-2}>0$.
 Therefore,  the (finite) sum becomes:
\begin{align*}
\partial_{u=0}  X*Y_u& 
= \sum_{p=0}^\infty \,\widetilde c_{p} \,  \ad^p X (\partial_{u=0}Y_u).
\end{align*}
Using $\partial_{u=0}Y_u=\mathfrak d_{x{\exp_G} tV} \Phi (V)$
proves Part 1. Besides, the term $p=0$ comes from $n=1$, $r_1=0$ and $s_1=1$, whence $\widetilde c_{0}=c_{0,1}=1$. This shows Part (1). 

The first two relations in Part (2) come readily from the definitions of the objects involved. 
This together with the consequence of Part (1)
$$
\partial_t^{k+1}  \ln_H \circ \Phi_x({\exp_G}(tV) ) 
= \sum_{p=0}^\infty \widetilde c_{p} \sum_{k_1+k_2=k} \binom{k_2}{k}
\partial_{t_1=0}^{k_1}\partial_{t_2=0}^{k_2}  
\ad^p\left(\ln_H \Phi_x({\exp_G}(t_1V) )\right) ( \mathfrak d_{x{\exp_G} (t_2V)} \Phi (V)).
$$
implies the rest of  Part (2).
\end{proof}

\subsubsection{Proof of Theorem \ref{thm:LargeStatement}}

Let us start with the equivalence \textbf{\rm (i)}. 

\begin{proof}[Proof of Part \textbf{\rm (i)}  in Theorem \ref{thm:LargeStatement}]
By Lemma \ref{lem_PD_filtration}, it suffices to prove the reverse implication:
we assume that~$\Phi$ preserves the filtration at every $x\in U$ and we want to show that it is uniformly Pansu differentiable on $U$. 
 
Let $V\in \mathfrak g_j$.
By Proposition \ref{prop_MPD}, it suffices to show that 
the limit in \eqref{eq_lim_of_epslambdaiVj} exists and holds locally uniformly. 
Here, we will show the existence of the limits as the local uniformity will be a natural consequence of the existence. 
We only need considering $j<i$ by Lemma \ref{lem_MPD}. 

\smallskip 

As $\Phi$ preserves the filtration at every $x\in U$, by Lemma \ref{lem_eq_filtration_preserving},
we have in a neighborhood of $t=0$ 
\begin{equation}
\label{eq1_pfthm_PDiffFP}
 \pr_{\mathfrak h,i} (\mathfrak d_{x {\exp_G} (tV)} \Phi (V))=0.
\end{equation}
Therefore, differentiating in $t$, we obtain that in the same neighborhood of $t=0$
\begin{equation}\label{toto}
\forall \ell=0,1,2\ldots
\qquad \partial_{t=0}^\ell\pr_{\mathfrak h,i}  ( \mathfrak d_{x {\exp_G} (tV)} \Phi (V))=0.
\end{equation}
More precisely,  by Lemma \ref{lem_eq_filtration_preserving}, we also have 
$$
\forall  p>j \qquad \pr_{\mathfrak h,p} (\mathfrak d_{x {\exp_G} (tV)} \Phi (V))=0,
 $$
and equation~\eqref{eq1_pfthm_PDiffFP} may be generalised into 
\begin{equation}
\label{eq2_pfthm_PDiffFP}
\forall \ell=0,1,2\ldots
\qquad \partial_{t=0}^\ell \mathfrak d_{x {\exp_G} (tV)} \Phi (V) \in \mathfrak h_1\oplus \ldots \oplus \mathfrak h_{j}.
\end{equation}
The latter relation implies that we have 
 for any $p\in\N$ and $\ell =1,2,\ldots$:
$$
\ad^p ( \mathfrak d_{x} \Phi (V))(\partial_{t=0}^{\ell}   \mathfrak d_{x{\exp_G}(tV)} \Phi (V))\in \mathfrak h_1\oplus \ldots \oplus \mathfrak h_{(p+1)j}.
$$
The previous fact together with Lemma \ref{lem_thm_PDiffFP} (2) implies for any $k=2,3,\ldots$
\begin{equation}\label{2Tbis}
 \partial_{t'=0}^{k} \ln_H (\Phi_x({\exp_G} (t' V ))
  \in \mathfrak h_1\oplus \ldots \oplus \mathfrak h_{(k-1)j}.
\end{equation}

\smallskip 

We now  combine these observations with the Taylor expansion of $t\mapsto \pr_{\mathfrak h,i} \circ \ln_H (\Phi_x({\exp_G} (t V )) $ for $t=\eps^j$.
We push the expansion at a higher order depending on how far~$j$ is from $i$. Indeed, for $j<i$, we can find an integer $k\geq 1$ such that $\frac i {k+1} <j< \frac i{k-1}$ (with the convention that $\frac i{k-1}=+\infty$ if $k=1$) and we push the Taylor expansion up to order $k$: 
$$
 \ln_H (\Phi_x({\exp_G} (t V )) = 
 \sum_{k'=1}^k 
 \frac 1{k'!} t^{k'} \partial_{t'=0}^{k'} \ln_H (\Phi_x({\exp_G} (t' V )) 
 +O(|t|^{k+1}). 
$$
By the first part of Lemma \ref{lem_thm_PDiffFP} (2), we have 
\begin{align*}
\ln_H \circ\Phi_x({\exp_G}(tV) )|_{t=0}
&=0,
\;\;
\partial_{t=0}
\ln_H \circ\Phi_x({\exp_G}(tV) ) 
= \mathfrak d_x \Phi (V)=0
\end{align*}
by
\eqref{eq1_pfthm_PDiffFP}.
Therefore,
by \eqref{2Tbis}, 
as long as $(k-1)j<i$, we have
$$
 \pr_{\mathfrak h,i}\circ\ln_H (\Phi_x({\exp_G} (\eps^j V )) = 
 \sum_{k'=1}^k 
 \frac 1{k'!} \eps^{jk'} \pr_{\mathfrak h,i}(\partial_{t'=0}^{k'} \ln_H (\Phi_x({\exp_G} (t' V )) )
 +O(\eps^{j(k+1)})
 =O(\eps^{j(k+1)}) .
$$
 This shows the existence of  the limits in 
\eqref{eq_lim_of_epslambdaiVj} for $j<i/(k-1)$ such that $j>i/(k+1)$, and thus recursively for all $j<i$.
Furthermore, one checks easily that they
hold locally uniformly, so Part~\textbf{\rm (i)} of Theorem~\ref{thm:LargeStatement} is proved.
\end{proof}

\smallskip 

The next points of Theorem \ref{thm:LargeStatement} come from the preceding analysis:

\begin{proof}[Proof of Part \textbf{\rm (ii)}  in Theorem \ref{thm:LargeStatement}]
Additionally to the limits in~\eqref{eq_lim_of_epslambdaiVj}, we have obtained that 
 for any~$V\in \mathfrak g_j$ and $\pr_{\mathfrak h,i}\in\mathfrak h_i$, we have 
\begin{align*}
&\mbox{for} \ i\neq j,\qquad 
\eps^{-i} \pr_{\mathfrak h,i} \circ \ln_H \circ  \Phi_x ( {\exp_G} (\eps^jV))
= 
\eps\, r_{i,j}(x,V,\eps),\\
&
\mbox{for} \ i= j,\qquad 
\eps^{-i} \pr_{\mathfrak h,i} \circ \ln_H \circ  \Phi_x ( {\exp_G}( \eps^jV))
= \pr_{\mathfrak h,i} \circ\mathfrak d_{x}\Phi( V)  
+ \eps\, r_{i,j}(x,V,\eps),
\end{align*}
where the functions $r_{i,j}$ are valued in $\mathfrak h_i$, smooth on the open subset $\Omega_{\mathfrak g_j,U}$ and
continuous on~$\mathcal R_{\mathfrak g_j,U}$.
This implies Point (2) and  that $(x,V)\mapsto \PD_x \Phi({\exp_G} V)$ is smooth on $U\times \mathfrak g_j$ for any $j=1,2,\ldots$
Then, the smoothness of   $\PD_x$ stated in Point (1) follows from  the Baker-Campbell-Hausdorff formula and the fact that it is a group morphism. 
Finally,  the existence of $R$ in Point (3) follows for general $V\in \mathfrak g$,  
using \eqref{eq_PhixV_Vj} and the Baker-Campbell-Hausdorff formula. 
\end{proof}

\section{Invariance of the semi-classical calculus on nilpotent Lie group}\label{sec:inv}

This section is devoted to the proof of  Theorem~\ref{thm:inv}.
We will first recall (see \cite{FF1,FF2}) some properties of the semi-classical calculus 
that will be useful in the proof. 

\subsection{$L^2$-boundedness in the semi-classical calculus}

If $\sigma\in {\mathcal A}_0(U\times \Ghat)$, 
then 
\begin{equation}
\label{eq_L2bdd}
\| {\rm Op}_1 (\sigma) \|_{\mathcal L(L^2(U))}
\leq 
\int_G \sup_{x\in U} |\kappa_x (z)| dz=:\|\sigma\|_{\mathcal A_0(U\times \widehat G)},
\end{equation}
and the right-hand side defines the semi-norm 
$\|\cdot\|_{\mathcal A_0(U\times \widehat G)}$
on $\mathcal A_0(U\times \widehat G)$. 
Consequently,  the operators ${\rm Op}_\eps(\sigma)$ are uniformly  bounded on $L^2(U)$ with bound:
$$
\| {\rm Op}_\eps (\sigma) \|_{\mathcal L(L^2(U))}
\leq 
\int_G \sup_{x\in U} |\kappa_x^{(\eps)} (z)| dz=\int_G \sup_{x\in U} |\kappa_x (z)| dz=\|\sigma\|_{\mathcal A_0(U\times \widehat G)}.
$$
The next lemma shows that the behaviour  of the integral kernel near the diagonal 
$\{(x,x) \ : \ x\in U\}$ in $U$
contains all the information  about ${\rm Op}_\eps(\sigma)$ at leading order.

\begin{definition}\label{def:cutoff}
A \emph{cut-off along the diagonal} of $U$
is a function 
$\chi\in{\mathcal C}^\infty(U\times U) $
such that for each $x\in U$, the map  $z\mapsto \chi (x, xz^{-1})$ extends trivially to a smooth function on $G$ that
is identically equal to 1  on a neighborhood  $U_1$ of $0$ and vanish outside  a bounded neighborhood  $U_0$ of $0$; moreover, the neighborhoods $U_1$ and $U_0$ are assumed to be  independent of $x\in U$. 
We may call the smallest $U_0$ the diagonal support of $\chi$.
\end{definition}
For example, if $\chi_1\in {\mathcal C}_c^\infty(G)$ is a function equal to~$1$ close to~$0$ and with support small enough, then $(x,y)\mapsto \chi_1(xy^{-1})$ is a cut-off along the diagonal of $U$
whose diagonal support is the support of $\chi$.

\begin{lemma}
\label{lem_diag}
Let $\sigma\in \mathcal A_0(U\times \Ghat)$ and let  $\chi\in{\mathcal C}^\infty(U\times U) $ be a cut-off along the diagonal as introduced in Definition~\ref{def:cutoff}.
Let $S_\eps$ be the operator with integral kernel 
$$
(x,y)\mapsto \kappa^{(\eps)}_x(y^{-1}x)\chi(x,y).
$$
Then, for all $N\in\N$, there exists a constant $C=C_{N,\sigma,\chi}>0$ such that 
$$
\forall \eps\in (0,1]\qquad 
\left\|  {\rm Op}_\eps(\sigma)-S^\eps \right\|_{\mathcal L(L^2(U))}
\leq C {\eps}^{N}.
$$
\end{lemma}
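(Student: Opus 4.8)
The plan is to write $\mathrm{Op}_\eps(\sigma) - S_\eps$ as an operator whose integral kernel is
$$
(x,y) \longmapsto \kappa^{(\eps)}_x(y^{-1}x)\,\bigl(1-\chi(x,y)\bigr),
$$
and to show that this kernel is $O(\eps^N)$ in a norm that dominates the operator norm on $L^2(U)$ — for instance, by a Schur-test bound, controlling both $\sup_x \int_U |\cdot|\,dy$ and $\sup_y \int_U |\cdot|\,dx$. First I would change variables via the substitution $z = y^{-1}x$ (or $y = xz^{-1}$), so that, recalling $\kappa^{(\eps)}_x(z) = \eps^{-Q}\kappa_x(\delta_\eps^{-1}z)$, the relevant integrand becomes $\eps^{-Q}\,|\kappa_x(\delta_\eps^{-1}z)|\,|1-\chi(x,xz^{-1})|$. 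By Definition~\ref{def:cutoff}, the factor $1-\chi(x,xz^{-1})$ vanishes on the fixed neighbourhood $U_1$ of $0$, uniformly in $x$; hence on the support of the integrand we have $|z|\geq c$ for some $c>0$ (with respect to a fixed homogeneous quasi-norm), which after rescaling forces $|\delta_\eps^{-1}z| = \eps^{-1}|z| \geq c\eps^{-1}$.

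The key step is then the following: since $x\mapsto \kappa_x$ lies in $\mathcal{C}^\infty_c(U,\mathcal{S}(G))$, the Schwartz seminorms of $\kappa_x$ are bounded uniformly in $x$, so for any $M\in\N$ there is $C_M$ with $\sup_{x\in U}|\kappa_x(w)| \leq C_M (1+|w|)^{-M}$. Plugging in $w=\delta_\eps^{-1}z$ and using $|\delta_\eps^{-1}z|\geq c\eps^{-1}$ together with the homogeneity of the Haar measure ($dz = \eps^{-Q}\,d(\delta_\eps^{-1}z)$ absorbs the $\eps^{-Q}$ prefactor), one gets
$$
\eps^{-Q}\!\int_{|z|\geq c}\! \sup_{x}|\kappa_x(\delta_\eps^{-1}z)|\,dz
= \int_{|w|\geq c\eps^{-1}} \sup_x |\kappa_x(w)|\,dw
\leq C_M \int_{|w|\geq c\eps^{-1}} (1+|w|)^{-M}\,dw,
$$
and choosing $M = N+Q+1$ makes the last integral $O((\eps^{-1})^{-(M-Q)}) = O(\eps^{N+1})$, which is even better than required. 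The same estimate, with the roles of $x$ and $y$ interchanged (integrating in $x$ for fixed $y$, again by the substitution $z=y^{-1}x$), gives the second Schur bound, since the cutoff and the kernel are symmetric enough in their dependence and $\kappa$ is still compactly supported in the base variable. The Schur test then yields $\|\mathrm{Op}_\eps(\sigma)-S_\eps\|_{\mathcal{L}(L^2(U))}\leq C\eps^N$ with $C$ depending only on $N$, a finite number of Schwartz seminorms of $\sigma$ (through $\kappa$), and the diagonal support of $\chi$.

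The main obstacle I anticipate is bookkeeping rather than a genuine difficulty: one must be careful that the compact support of $x\mapsto\kappa_x$ in $U$ interacts correctly with the quasi-norm triangle inequality when passing between the variables $x$, $y$ and $z=y^{-1}x$ (so that ``$|z|$ small'' genuinely localizes near the diagonal uniformly, and conversely that the support in $x$ keeps the $y$-integral over a bounded set), and that the constant $c>0$ bounding $|z|$ from below on $\mathrm{supp}(1-\chi)$ is indeed independent of $x$ — this is exactly guaranteed by the requirement in Definition~\ref{def:cutoff} that $U_1$ be independent of $x$. No new analytic input beyond the Schwartz decay of $\kappa$ and the homogeneity of Haar measure is needed.
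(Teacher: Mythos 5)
Your proposal is correct and takes essentially the same approach as the paper: the error operator has convolution kernel $\kappa_x(w)\bigl(1-\chi(x,x\delta_\eps w^{-1})\bigr)$, and one bounds the $L^2$ operator norm by $\int_G\sup_x|\cdot|\,dw$ (the paper cites its seminorm estimate~\eqref{eq_L2bdd}, you re-derive the equivalent bound via the Schur test). The only cosmetic difference is how the vanishing of $1-\chi$ near the diagonal is exploited: the paper writes $\chi(x,xz^{-1})=1+\theta(x,z)|z|_G^{N}$ with $\theta$ bounded and extracts $\eps^{N}|w|^{N}$ pointwise, while you restrict the integral to the tail $|w|\geq c\eps^{-1}$ and integrate the Schwartz decay there; both give the same $O(\eps^{N})$ bound.
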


\begin{proof}
One observes that  for all $N\in \N$, there exists a bounded function $\theta$ defined on $G\times G$ such that, for all $x,z\in G$
\[
\chi(x,xz^{-1})=1+\theta(x,z) |z|_G^{N}
\]
where $|\cdot|_G$ is a homogeneous quasi-norm on~$G$.
Set $\widetilde \kappa_{x,\eps}(z)= \kappa_x(z) (1-\chi(x,x\delta_\eps z^{-1}))$, then there exists a constant $c_0>0$ such that 
\[
\|\widetilde \kappa_{x,\eps}\|_{\mathcal A_0(U\times G)} \leq c_0 \eps^{N} \int_G\sup_{x\in U} |\kappa_x(z)| \, |z|_G^{N} dz.
\]
This induces the estimate for the operator $S_\eps$.
\end{proof}

\subsection{Proof of Theorem~\ref{thm:inv}}
Let $\sigma\in \mathcal A_0(\Phi(U)\times \Hhat)$
and denote by $\kappa:x\mapsto \kappa_x$ its convolution kernel. 
Set $\widetilde \sigma := (\widehat{\mathbb{G}}\Phi)^*\sigma \in \mathcal A_0(U\times \Ghat)$ and denote by 
$\widetilde \kappa = \PullOne \kappa$ its convolution kernel. 
Note that the $x$-supports are transported via $\Phi$:
$$
x\mhyphen{\rm supp}\, \widetilde\kappa \subset \Phi^{-1} (x\mhyphen{\rm supp}\,  \kappa):=
K_0\subset U.
$$
Let us first proceed to a reduction of the problem by using Lemma~\ref{lem_diag}. 
Let $\chi\in\mathcal C^\infty (\Phi(U)\times \Phi(U))$ be a cut-off along the diagonal of $\Phi(U)$. Consider
the function $\widetilde \chi$ defined on $U\times U$ by 
\[ \widetilde \chi(x,y)= \chi(\Phi(x),\Phi(y)).\]
Then $\widetilde \chi $ is a cut-off along the diagonal of $U$. 

With the two cut-off functions $\chi$ and $\widetilde \chi$ in hands, by Lemma \ref{lem_diag}, we can restrict to proving  that the operator~$R_\eps$ whose integral kernel is 
\begin{align*}
(x,y) \longmapsto \,
&\eps^{-Q} J_\Phi(y) ^{1/2} J_\Phi(x)^{1/2}\kappa_{\Phi(x)}\left(\delta_{\eps^{-1}}(\Phi(y)^{-1}\Phi(x) )\right) 
\chi (\Phi(x),\Phi(y))\\
&\qquad
- \eps^{-Q} \widetilde \kappa_x (\delta_\eps^{-1}(y^{-1} x)) \widetilde \chi(x,y),
\end{align*}
satisfies $\|R_\eps\|_{\mathcal L(L^2(U))} \leq c_0 \,\eps$ for some constant $c_0>0$.
\smallskip

We observe that the operator $R_\eps$ may be written in the form 
$$
R_\eps f(x) = f* r_{\eps,x}^{(\eps)}(x), \quad x\in U, \ f\in \mathcal S(G),
$$
where $r_{\eps,x}^{(\eps)} (y) = \eps^{-Q} r_{\eps,x}(\delta_\eps^{-1}y)$ and 
$r_{\eps,x}$ is the function  in $\mathcal C_c^\infty(U, \mathcal S(G))$ given by
\begin{align*}
r_{\eps,x}(z) &:= J_\Phi(x\delta_\eps z^{-1})^{1/2} J_\Phi(x)^{1/2} \,
\kappa_{\Phi(x) }\left( \delta_{\eps^{-1}}\left(\Phi(x\delta_\eps z^{-1} )^{-1}\Phi(x) \right)\right) \chi \left(\Phi(x),\Phi(x\delta_\eps z^{-1})\right)\\
&\qquad -  \widetilde \kappa_x (z) \widetilde \chi(x,x\delta_\eps z^{-1})\\
&= J_\Phi(x\delta_\eps z^{-1})^{1/2} J_\Phi(x)^{1/2} \,\kappa_{\Phi(x) }\left( \delta_{\eps^{-1}}\left(\Phi(x\delta_\eps z^{-1} )^{-1}\Phi(x) \right)\right) \widetilde \chi(x,x\delta_\eps z^{-1}) \\
&\qquad - J_\Phi(x)\, \kappa_{\Phi(x)}\left( \PD_x\Phi(z)\right) \widetilde \chi(x,x\delta_\eps z^{-1}).
\end{align*}
We now aim to prove that 
\begin{equation}
    \label{eq_aimpfthm:inv}
\exists c_0 >0 \quad \forall \eps\in (0,1]\qquad
I_\eps:=\int_G \sup_{x\in U} 
\left| r_{\eps,x}(z)\right| dz \leq c_0\,\eps,
\end{equation}
as, by \eqref{eq_L2bdd}, this implies $\|R_\eps\|_{\mathcal L(L^2(U))} \leq c_0 \eps$.

We observe that the $x$-support of $r_{\eps,x}$ is included in $K_0$. 
We may assume that the diagonal support of $\chi$ is as small as we need  below and therefore that the diagonal support of $\widetilde \chi$ is included in a small ball $\bar B_{r_0}$ of $G$ with a radius $r_0$ as small as we need; here, we  have fixed a homogeneous quasi-norm  $|\cdot|_G$.
We  can decompose $I_{\eps} \leq I_{1,\eps}+I_{2,\eps}$
with
\begin{align*}
I_{1,\eps}
&:= 
\int_{G} \sup_{x\in K_0} 
\left| J_\Phi(x\delta_\eps z^{-1})^{1/2} -J_\Phi(x)|^{1/2}\right|  |J_\Phi(x)|^{1/2}\left| 
 \kappa_{\Phi(x)}\left( \PD_x\Phi(z)\right)\right| 
 \left|\widetilde \chi(x,x\delta_\eps z^{-1})\right|
 dz,\\
I_{2,\eps}
&:= 
C_2 \int_G \sup_{x\in K_0} 
\left|  
\kappa_{\Phi(x) }\left( \delta_{\eps^{-1}}\left(\Phi(x\delta_\eps z^{-1} )^{-1}\Phi(x) \right)\right)
-  \kappa_{\Phi(x)}\left( \PD_x\Phi(z)\right)\right|
\left|\widetilde \chi(x,x\delta_\eps z^{-1})
 \right|
 dz ,
 \end{align*}
 where 
 $$
 C_2 := \sup_{ K_0} |J_\Phi| ^{1/2} \sup_{K_0 \bar B_{r_0}} |J_\Phi| ^{1/2}.
 $$
 
For $I_{1,\eps}$, we use the Taylor estimates due to Folland and Stein (see~\cite[Section~1.41]{folland+stein_82}  or~\cite[Section~3.1.8]{FR}):
we have for any $x\in K_0$ and $z\in  \delta_{\eps}^{-1} \bar B_{r_0}$
$$
 \left|J_\Phi(x\delta_\eps z^{-1})^{1/2}  -J_\Phi(x)^{1/2}\right|
 \lesssim |\delta_\eps z^{-1}|_G \sup_{x'\in K_0', j=1,\ldots, \dim G} |X_j (J_\Phi^{1/2})(x')|
$$
for some compact subset  $K_0'$ of $U$. Since $y\mapsto  \kappa_{\Phi(x)}\left( \PD_x\Phi(y)\right)\in \mathcal C_c^\infty(U, \mathcal S(G))$, 
this implies that $I_{1} \leq c_1 \,  \eps$ for some constant $c_1>0$. 

\smallskip

For  $I_{2,\eps}$, 
we apply Theorem \ref{thm:LargeStatement} and Corollary \ref{cor:LS} 
with the usual (Euclidean) Taylor estimate to obtain:
\begin{align*}
&\left|\kappa_{\Phi(x) }\left( \delta_{\eps^{-1}}\left(\Phi(x\delta_\eps z^{-1}) ^{-1} \Phi(x)\right)\right)
-
\kappa_{\Phi(x) } (\PD_x\Phi(z))\right|
\\&\qquad=
\left|\kappa_{\Phi(x) }\circ \exp_H \left(
\pd_x\Phi(\ln_G z)  +\eps \rho(x,\ln_G z,\eps)\right)
- 
\kappa_{\Phi(x) }\circ \exp_H  \left(
\pd_x\Phi(\ln_G z)\right)
\right|
\\&\qquad\leq 
\sup_{|W| \leq \eps  |\rho(x,\ln_G z,\eps)|}
\left|D_{\pd_x\Phi (\ln_H z)+W}\kappa_{\Phi(x) }\circ \exp_H \right| \, |\eps \rho(x,\ln_G z,\eps)|
\\
&\qquad\leq D_N (1+|\pd_x\Phi (\ln_G z)|)^{-N} \eps,
\end{align*}
for any $N\in \N$ and some constant $D_N>0$,
as long as $x\in K_0$, $\delta_\eps z^{-1}\in  \bar B_{r_0}$, and $\eps\in (0,1]$, 
since $y\mapsto \kappa_y\in \mathcal C_c^\infty(U, \mathcal S(G))$. 
This implies
$I_2\leq c_2 \eps$
 for some constant $c_2>0$,
 which concludes the proof.

\section{Geometric meaning of the convolution kernel
 }\label{sec:app}

As for the semi-classical calculus in the Euclidean setting, it is important to keep in mind the geometric aspects of the elements that one considers. 
Indeed,  the objects defined using the group Fourier Transform depend on a choice of Haar measure on the graded Lie group $G$. This makes little difference in the group context, where the Haar measure is determined up to a constant, but will matter more if one wants to extend this non-commutative semi-classical approach  on manifolds. In this section, we explain how to avoid choosing a normalization for the Haar measure when defining semi-classical quantization on graded Lie groups. We achieve this by giving a geometrically intrinsic definition of the convolution kernel and thus of the semiclassical symbols.
\smallskip 

We will start with well-known geometric considerations and set out their usual conventions.
The manifolds (often denoted by $M$) are assumed to be smooth. 
If $F = \cup_{x\in M} F_x$ is  a  fiber bundle  over~$M$, 
$\pi_F:F\to M$ will  denote its canonical projection.
Here, all the fiber bundles are smooth, and we denote by $\Gamma(F)$ the space of its smooth sections.  

\subsection{Generalities on  bundles}
Let $E$ be a real vector bundle over a manifold $M$ of rank~$r$. 
\smallskip

For $s\in\mathbb{R}^*$, the {\it $s$-density bundle} associated to~$E$ is the set~$|\Lambda|^s(E)$ of all maps $\mu_x: (E_x)^r:=E_x\times\cdots\times E_x\to\mathbb{R} $, for which
\begin{align*}
    \mu_x(Av_1 \ldots, Av_r)=|\det A|^s\mu(v_1, \ldots, v_r); \quad A\in \text{GL}(E_x).
\end{align*}
When $E=TM$, we simply write $|\Lambda|^sM$
for the  $s$-density bundle over $M$. An $s$-density $\mu$ on $M$ is a smooth section of $|\Lambda|^sM$. When $s=1$ or $s=\frac{1}{2}$, we say that $\mu$ is a {\it density} or {\it half-density} respectively. 
\smallskip 

We say that a density $\mu$ is a {\it positive density} when 
$\mu_x(v_1, \ldots, v_r)>0$ for all $x\in M, \, v_i\in E_x$.
A nonvanishing $n$-form $\omega$ on an open set $U\subset M$ determines a positive density $|\omega|$, and identifies $s$-densities on $U$ with functions via $\mu=f|\omega|^s$. 
In particular, an $s$-density $\mu$ on $M$, is written
\begin{align*}
    \mu(y)=f(y)|dy|^s
\end{align*}
in local coordinates $(y^i)$. 
\smallskip

{\it The canonical $L^2$-space}. Note that two half-densities determine a $1$-density; indeed, given $u=f\mu^\frac{1}{2}$ and $v=g\mu^\frac{1}{2}$, we have $uv=fg\mu$. Furthermore, the integral of a $1$-density is invariantly defined. Therefore we may define the canonical Hilbert space of half-densities,  $L^2(M)$, as the closure of the set of compactly supported half-densities with respect to
\begin{align*}
    \langle u, v\rangle _{L^2(M)}:=\int_M uv.
\end{align*}
Upon choosing a positive density $\mu$ on $M$, we obtain a unitary map between the $L^2$ space associated to $\mu$, and the canonical Hilbert space
\begin{equation}\label{def:Umu}
    U_\mu: L^2(M, \mu)\longrightarrow L^2(M); \quad
    f\longmapsto f\mu^\frac{1}{2}.
\end{equation}

 {\it Densities and mappings}.
The $s$-density bundle, $|\Lambda|^sM$, is a trivial line bundle (though not canonically so), as every smooth manifold has a global positive density. A choice of positive densities~$\mu$ and~$\nu$ on two manifolds~$M$ and~$N$ respectively allows for the definition of the Jacobian of a
smooth map $\Phi: M\to N$. Indeed,
 there exists a positive function $J_\Phi$ on $M$, called the \textit{Jacobian of }$\Phi$, such that 
\begin{align*}
 \Phi^*\nu=J_\Phi\,\mu.
\end{align*}
More generally $\Phi^*\left(f\nu^s \right)=\left(f\circ\Phi \right)J_\Phi^s\, \mu^s.$ 
If in addition, 
$\Phi$ is a diffeomorphism, then
\begin{align*}
    \Phi^*\circ U_\nu = U_{\mu}\circ\, \PullTwo  
\end{align*}
where $\PullTwo $ is defined as in~\eqref{eq:PullTwo} and $U_\mu$, $U_\nu$ in~\eqref{def:Umu}. 
\smallskip

{\it Vertical bundles.}
 Consider a  fiber bundle $F = \cup_{x\in M} F_x$ over a manifold $M$. With $f\in F$, we associate $x=\pi_F(f)$ and $f_x$ the corresponding point in the fiber $F_x$.
 The {\it vertical space} $\mathcal V_f(F)$ above a point $f\in F$ is the tangent space $T_{f_x} F_x$ to the fiber $F_x$ at $f_x$.
 The resulting vector bundle $\mathcal V(F) = \cup_{f\in F}\mathcal V_f(F) $ over $F$ is called the {\it vertical bundle} of $F$; it is  given by the kernel  of the map $d\pi_F: TF\to TM$.
Smooth sections of  $|\Lambda|\mathcal{V}(F)$ are called {\it vertical densities} on $F$. 

\begin{example}[The vertical bundle of a vector bundle]
\label{ex_VE}
Suppose $F=E$ is a vector bundle. Then 
 the vertical space $\mathcal V_e(E)=T_{e_x} E_x$ above   $e\in E$ (here  $x=\pi_E(e)$) is naturally identified with~$E_x$.
 Hence, the vertical bundle $\mathcal{V}(E)$ is isomorphic to  the pull-back  bundle
$(\pi_E)^*(E) = \cup_{e\in E} E_{\pi_E(e)}$ of~$E$
by 
the isomorphism of vector bundles $ \text{vert}^E: (\pi_E)^*\left(E\right)\longrightarrow\mathcal{V}(E)$: 
\begin{align*}
\text{vert}^E_e (V):=\frac{d}{dt}\bigg|_{t=0} U +tV ,
\quad U=e_x\in E_x ,\, V\in E_x \cong T_{e_x} E_x = \mathcal V_e(E), \, x=\pi_E(e). 
\end{align*}
\end{example}

This example may be generalised to the case of a group bundle:

\begin{example}[The vertical bundle of a Lie group bundle]
\label{ex_VG}
Suppose $F=G=\cup_{x\in M} G_x$ is a Lie group bundle. 
The vertical space ${\mathcal V}_{g}(G) = T_{g_x}(G_x)$ above  $g\in G$ (here $x=\pi_G(g)$) is naturally isomorphic to the Lie algebra $\mathfrak g_x$ of the Lie group $G_x$. 
Therefore,  the vertical bundle ${\mathcal V}(G)$ is naturally isomorphic to  the pull-back bundle $\cup_{g\in G} {\mathfrak g}_{\pi_G(g)} = (\pi_G)^* (\mathfrak g)$ of the corresponding Lie algebra bundle $\mathfrak{g}=\cup_{x\in M} \mathfrak{g}_x$. 
More precisely, the natural isomorphism of vector bundles $  \text{vert}^G: (\pi_{G})^*(\mathfrak{g})\longrightarrow \mathcal{V}(G)$ is given via
\begin{align*}
\text{vert}^G_g(V):= \frac{d}{dt}\bigg|_{t=0}g_x\exp_{G_x}(tV),  \quad  V\in \mathfrak{g}_x \cong T_{g_x}(G_x)={\mathcal V}_{g}(G), \, x=\pi_G(g). 
\end{align*}

\end{example}

\subsection{Filtered manifolds}
A {\it filtered manifold} is a  manifold $M$ equipped with a filtration of the tangent bundle $TM$ by vector bundles 
$$
M\times \{0\} = H^0 \subseteq H^1 \subseteq \ldots \subseteq H^{n_M} =TM
\qquad\mbox{satisfying}\qquad 
[\Gamma(H^i),\Gamma(H^j) ]\subseteq \Gamma(H^{i+j});
$$
here we are using the convention that $H^{i} =TM$ for $i>n_M$. 
For each $x\in G$, the quotient
$$
\mathfrak{G}_x M := \oplus_i \left(H^i_x / H^{i-1}_x\right)
$$
is naturally equipped with a structure of graded Lie algebra, 
and we denote by $ \mathbb G_xM$ the corresponding graded Lie group. 
As explained in \cite{VeY1}, the unions
$$
\mathbb{G}M :=\cup_{x\in M} \mathbb G_xM
\qquad \mbox{and}  \qquad
\mathfrak{G}M :=\cup_{x\in M}\mathfrak{G}_x M,
$$
are naturally equipped with a bundle structure that are called  the {\it osculating group and Lie algebra bundles} over $M$.

\begin{example}
Equiregular sub-Riemannian manifolds, in particular contact manifolds, are naturally filtered manifolds.  
Indeed, suppose $M$ is a smooth manifold equipped with a bracket-generating distribution $\mathcal{D}\subset TM$. Let $\Gamma^1=\Gamma(\mathcal{D})$ be the set of smooth sections of $\mathcal{D}$, and 
$$\Gamma^i:=[\Gamma^1, \Gamma^{i-1}]+\Gamma^{i-1}, \quad i>1.$$
As $\mathcal D$ is bracket generating, 
there exists $i$ such that $\Gamma^i = TM$, 
and we denote by $r$ the smallest such integer $i$.
If, for every $i=1, \ldots r$, there exists a subbundle $H^i\subset TM$ for which $\Gamma^i=\Gamma(H^i)$ is the set of smooth sections on $H^i$, then $M$ is said to have an \textit{equiregular} sub-Riemannian structure.  
Clearly, the $H^i$'s provide the structure of filtered manifold. 

 \end{example}
 \begin{remark}
Sub-Riemannian (and semi-Riemannian) structures are not in general equiregular. Examples include the Martinet distribution or the Grushin plane. The latter examples also fall outside of the scope of this paper because they are not open subsets of graded Lie groups. 
%
An approach to a pseudodifferential calculus on the non-equiregular case is given in \cite{LaMoYu}. 
\end{remark}

\medskip 

In what follows, it will be helpful  to describe  a smooth function   $f$   on $\mathbb{G}M$ by denoting  the associated function on the fiber $\mathbb G_x M$ via
$$
\mathbb G_x M \ni z_x\longmapsto f_x(z_x), 
\qquad \mbox{for each}\ x\in M.
$$
We will thus denote by $\mathcal S(\mathbb GM):=\cup_{x\in M} \mathcal S(\mathbb G_xM)$  the (smooth Fr\'echet) bundle of fiberwise Schwartz functions on the group $\mathbb{G}_xM$ over $x\in M$. 
The space
$\Gamma_c(\mathcal{S}(\mathbb{G}M))$  of  compactly supported (smooth) sections of  $\mathcal{S}(\mathbb{G}M)$
 may be described as the space of smooth function~$f$ on~$\mathbb{G}M$
  that are compactly supported in $x\in M$, and for which the functions $z_x\mapsto f_x(z_x)$ are  Schwartz on~$\mathbb{G}_xM$ in a way  varying smoothly with $x\in M$.

\medskip

A {\it morphism of filtered manifolds} is a smooth map $\Phi:M\to N$ between two filtered manifolds~$M$ and~$N$ that respects the filtrations. 
Denoting by $H^{M,i}$ and $H^{N,i}$ the vector bundles giving the filtrations of $TM$ and $TN$ respectively, 
this means that 
$d_x\Phi (H_x^{M,i}) \subseteq H_x^{N,i}$ holds at every $x\in M$ and $i=0, 1, \ldots$ 
At every $x\in M$,  we may define  a
Lie algebra morphism 
$\mathfrak G_x \Phi : \mathfrak G_x M \to \mathfrak G_{\Phi(x)} N$
via
$$
 \mathfrak G_x \Phi (V\, {\rm mod} \ H_x^{M,i-1}) 
 = d_x\Phi (V)\ {\rm mod} \ H_{\Phi(x)}^{N,i-1},
\quad
V\in  H^{M,i}_x, \  i=1, \ldots
$$
We denote by $\mathbb G_x \Phi : \mathbb G_x M \to \mathbb G_{\Phi(x)} N$
the corresponding group morphism. 
This induces \cite{VeY1} morphisms between the osculating  bundles  
$$
\mathfrak G \Phi : \mathfrak G M \to \mathfrak GN
\qquad\mbox{and}\qquad
\mathbb G \Phi : \mathbb G M \to \mathbb GN
$$
that 
we may call the group and (resp.)  Lie algebra {\it osculating maps}.
\smallskip 

Open sets of graded Lie groups are naturally equipped with a structure of filtered manifolds. The next statement summarises 
the main results in  Section \ref{sec:FiltPres}: 
\begin{lemma}
\label{lem_sumsec:FiltPres}
Let $\Phi:U\to H$ be a smooth map from an open set $U$ of a graded group $G$ to a graded group $H$. 
The following are equivalent:
\begin{itemize}
    \item $\Phi$ preserves the filtration in the sense of Definition \ref{def_filtration_preserving}, 
    \item $\Phi$ is locally uniformly Pansu differentiable on $U$ in the sense of Definition \ref{def:pansu},
    \item $\Phi:U\to H$ is a morphism of filtered manifolds (as explained above). 
\end{itemize}
Moreover, in this case, the Pansu derivative and the osculating map coincide at every $x\in U$:
$$
\mathbb G_x \Phi = \PD_x \Phi \, \mbox{on}\ G
\qquad\mbox{and}\qquad
\mathfrak G_x \Phi = \pd_x \Phi \, \mbox{on}\ \mathfrak G.
$$
\end{lemma}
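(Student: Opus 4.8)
The plan is to obtain Lemma~\ref{lem_sumsec:FiltPres} by combining the results already proved in this section with a direct translation of the notion of a morphism of filtered manifolds. The equivalence of the first two bullet points is precisely Theorem~\ref{thm_PDiffFP} (equivalently Theorem~\ref{thm:LargeStatement}(i)), so it only remains to bring in the third bullet point and to match $\PD_x\Phi$ and $\pd_x\Phi$ with the osculating maps.

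First I would make explicit the natural filtered structure on an open set $U$ of a graded group $G$. Writing $\mathfrak g^{(i)}:=\mathfrak g_1\oplus\cdots\oplus\mathfrak g_i$, set $H^i_x:=\tau^G_x(\mathfrak g^{(i)})$ for $x\in U$, $i=0,1,\ldots$; these are left-invariant, hence smooth, subbundles of $TU$. The gradation $[\mathfrak g_a,\mathfrak g_b]\subseteq\mathfrak g_{a+b}$ gives $[\mathfrak g^{(i)},\mathfrak g^{(j)}]\subseteq\mathfrak g^{(i+j)}$, and since the bracket of two left-invariant vector fields is left-invariant and equals (at the identity) the Lie algebra bracket, the condition $[\Gamma(H^i),\Gamma(H^j)]\subseteq\Gamma(H^{i+j})$ holds on left-invariant sections, and then on all smooth sections because the extra first-order terms lie in $\Gamma(H^i)+\Gamma(H^j)\subseteq\Gamma(H^{i+j})$. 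As $\tau^G_x$ is a linear isomorphism carrying $\mathfrak g^{(i)}$ onto $H^i_x$, it identifies $\mathfrak g^{(i)}/\mathfrak g^{(i-1)}\cong\mathfrak g_i$ with $H^i_x/H^{i-1}_x$; evaluating the osculating bracket on left-invariant representatives $\tau^G_\cdot V$, $\tau^G_\cdot W$ (for $V\in\mathfrak g_i$, $W\in\mathfrak g_j$) gives $\tau^G_x[V,W]$ with $[V,W]\in\mathfrak g_{i+j}$, so under this identification $\mathfrak G_xU$ is canonically $(\mathfrak g,[\cdot,\cdot])$ with its gradation and $\mathbb G_xU$ is canonically $G$, and likewise on the target $H$.

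Next I would establish the equivalence of the first and third bullet points. By definition, $\Phi$ is a morphism of filtered manifolds iff $D_x\Phi(H^i_x)\subseteq H^i_{\Phi(x)}$ for every $x\in U$ and every $i$; in view of the definition of $\mathfrak d_x\Phi$ and of $H^i_x=\tau^G_x(\mathfrak g^{(i)})$, $H^i_{\Phi(x)}=\tau^H_{\Phi(x)}(\mathfrak h^{(i)})$, this is equivalent to $\mathfrak d_x\Phi(\mathfrak g^{(i)})\subseteq\mathfrak h^{(i)}$ for all $i$. Since $\mathfrak g^{(i)}=\oplus_{j\le i}\mathfrak g_j$ and $\mathfrak h^{(j)}\subseteq\mathfrak h^{(i)}$ whenever $j\le i$, that property holds for all $i$ if and only if $\mathfrak d_x\Phi(\mathfrak g_j)\subseteq\mathfrak h^{(j)}$ for all $j$, which is exactly Definition~\ref{def_filtration_preserving}.

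Finally I would identify the maps, assuming $\Phi$ filtration preserving. By Theorem~\ref{thm:LargeStatement}(ii)(2), for $V\in\mathfrak g_j$ one has $\pr_{\mathfrak h,i}(\pd_x\Phi(V))=0$ for $i\neq j$ and $\pr_{\mathfrak h,j}(\pd_x\Phi(V))=\pr_{\mathfrak h,j}(\mathfrak d_x\Phi(V))$, so $\pd_x\Phi$ sends $\mathfrak g_j$ into $\mathfrak h_j$ and agrees there with $\pr_{\mathfrak h,j}\circ\mathfrak d_x\Phi$. On the other hand, for $V\in\mathfrak g_j$ the class of $\tau^G_xV$ in $H^j_x/H^{j-1}_x\cong\mathfrak g_j$ is sent by $\mathfrak G_x\Phi$ to the class of $D_x\Phi(\tau^G_xV)=\tau^H_{\Phi(x)}(\mathfrak d_x\Phi(V))$ in $H^j_{\Phi(x)}/H^{j-1}_{\Phi(x)}\cong\mathfrak h_j$, that is to $\pr_{\mathfrak h,j}(\mathfrak d_x\Phi(V))$. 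Hence $\mathfrak G_x\Phi=\pd_x\Phi$ under the canonical identifications. Since $\PD_x\Phi$ and $\mathbb G_x\Phi$ are both group morphisms $G\to H$ (the former by Lemma~\ref{rem:bound}(2), using that $\Phi$ is uniformly Pansu differentiable, the latter by construction) whose differentials at the identity equal $\pd_x\Phi$, and $G$ is connected and simply connected, they coincide. I expect the only delicate point to be the bookkeeping around the canonical identifications $\mathfrak G_xU\cong\mathfrak g$, $\mathbb G_xU\cong G$ — which rest on the gradation splitting the filtration of $TU$ — and the verification that under them the abstract osculating bracket and osculating map reduce to the given graded bracket and to $\pd_x\Phi$; everything else is a citation of an earlier result in this section or an unwinding of the definitions.
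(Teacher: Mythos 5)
Your proof is correct, and it fills in exactly the work the paper leaves implicit: Lemma~\ref{lem_sumsec:FiltPres} is stated in the paper as a summary of Section~\ref{sec:FiltPres} without a supplied argument, only the remark that open sets of graded groups carry a natural filtered-manifold structure. Your route — citing Theorem~\ref{thm_PDiffFP} for the first two bullets, identifying the left-invariant filtration $H^i_x=\tau^G_x(\mathfrak g^{(i)})$ and unwinding the definition of $\mathfrak d_x\Phi$ to match the third, then using Theorem~\ref{thm:LargeStatement}(ii)(2) to identify $\pd_x\Phi$ with $\mathfrak G_x\Phi$ on graded pieces and invoking simple connectedness to promote this to $\PD_x\Phi=\mathbb G_x\Phi$ — is precisely the intended argument.
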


\subsection{Densities on filtered manifolds}
Let $M$ be a filtered manifold. 

\subsubsection{Haar system}
Let us fix  a smooth Haar system $\{\mu_x\}_{x\in M}$ for $\mathbb{G}M$, that is, a choice of Haar measure $\mu_x$ for each of the group fibers $\mathbb{G}_xM$ for which
\begin{align*}
     \forall f\in \mathcal C^\infty(\mathbb{G}M), \quad x\longmapsto \int_{z\in G_xM} f_x(z_x)\mu_x(z_x) \in \mathcal C^\infty(M).
\end{align*} 
A Haar measure $\mu_x$ on the group fiber $\mathbb{G}_xM$ naturally identifies with a left-invariant density, which we also call $\mu_x$, on $\mathbb{G}_xM$. 
By left-invariance, each $\mu_x$ corresponds to a positive element of the one-dimensional vector space $|\Lambda|\mathfrak{G}_xM$. Indeed, smooth Haar systems for $\mathbb{G}M$ correspond in this way to smooth positive sections of $|\Lambda|\mathfrak{G}M$, which we call {\it Haar densities} on $M$. 

\subsubsection{Vertical densities}

As in Examples \ref{ex_VE} and \ref{ex_VG},  the vertical space ${\mathcal V}_z ( {\mathbb G}M) = T_{z_x} {\mathbb G}_xM $ above $z\in {\mathbb G}M$ with $x=\pi_{{\mathbb G}M}(z)\in M$ identifies naturally with the Lie algebra $\mathfrak G_xM$. 
Hence, the vertical bundle ${\mathcal V}({\mathbb G}M) $ is naturally isomorphic to the pullback bundle $ (\pi_{\mathbb{G}M})^* (\mathfrak{G}M )$ of the osculating Lie algebra bundle. 
More precisely, the natural isomorphism of vector bundles 
$$
   \text{vert}: (\pi_{\mathbb{G}M})^* (\mathfrak{G}M )\longrightarrow \mathcal{V}(\mathbb{G}M)
   $$
is given via:
$$
  \text{vert}_z(V) :=  \frac{d}{dt}\bigg|_{t=0}z_x\exp_{\mathbb{G}_xM}(tV), \quad   V\in \mathfrak{G}_xM \cong T_{z_x} {\mathbb G}_M = {\mathcal V}_z ( {\mathbb G}M), \ x= \pi_{\mathbb G M}(z).
$$
This isomorphism allows us to 
lift 
densities $x\mapsto \mu_x$ in $|\Lambda|\mathfrak GM$ to vertical densities $z\mapsto \check\mu_z$ via
\begin{align*}
   \check \mu_{z}\left(\text{vert}_z(V_1), \ldots, \text{vert}_z(V_n)\right)= \mu_x(V_1, \ldots, V_n); \quad V_1, \ldots, V_n\in \mathfrak{G}_xM, \ x=\pi_{\mathbb GM}(z). 
\end{align*}
This lift yields a map of sections:
$     \Gamma\left(|\Lambda|\mathfrak{G}M\right)
    \rightarrow \Gamma\left(|\Lambda|\mathcal{V}(\mathbb{G}M)\right)$.
We shall subsequently blur the distinction between a Haar density on $M$, its lift to a vertical density on $\mathbb{G}M$, and a smooth Haar system on $\mathbb{G}M$.

\subsubsection{Haar system and smooth functions on $\mathbb{G}M$}

With respect to a smooth Haar system $\{\mu_x\}_{x\in M}$ on $M$, 
any  smooth function $\widetilde{\kappa}$ on $\mathbb{G}M$ yields the vertical density 
\begin{align}
    \kappa(z)=\widetilde{\kappa} (z)\mu_{z}, \quad z\in \mathbb{G}M.
    \label{verticalDensity}
\end{align}
Conversely, as $\dim (|\Lambda|\mathfrak{G}_xM) =1$ at every $x\in M$, 
any vertical density $\kappa$ on $\mathbb{G}M$ 
may be written as in \eqref{verticalDensity} for 
a unique smooth function  $\widetilde{\kappa}$  on $\mathbb{G}M$.
This defines an isomorphism of topological vector spaces
$$
    I_\mu: \mathcal {C}^\infty(\mathbb{G}M) \longrightarrow \mathcal{C}^\infty(\mathbb{G}M, |\Lambda|\mathcal{V}); \quad \tilde{\kappa}\longmapsto \kappa =\tilde{\kappa}\mu.
$$

\smallskip

In the next paragraph, we will use 
the (smooth Fr\'{e}chet) bundle
$\mathcal{S}(\mathbb{G}M, |\Lambda|\mathcal{V})$  of fiberwise Schwartz densities; that is, the collection $\{\kappa_x: z_x\mapsto \kappa_x(z_x)\}_{x\in M}$ of densities on the fibers $\mathbb{G}_xM$ for which the function $z_x\mapsto\tilde{\kappa}(z_x)$ is Schwartz.

\subsubsection{Schwartz vertical densities}

A vertical density $\kappa$ is said to be a {\it Schwartz vertical density}  when  the corresponding function $\widetilde \kappa \in C^\infty(\mathbb GM)$ 
from \eqref{verticalDensity} is in $\Gamma_c(\mathcal{S}(\mathbb{G}M))$.
Although this definition requires the choice of a  smooth Haar system $\mu=\{\mu_x\}_{x\in M}$ on $M$,
the resulting property is independent of $\mu$ as the $x$-support  of $ x\mapsto \kappa_x$ is assumed to be compact. 
We denote by $\Gamma_c(\mathcal{S}(\mathbb{G}M, |\Lambda|\mathcal{V}))$
the space of vertical Schwartz densities (compactly supported over $M$). 
Although these spaces are independent of the choice of Haar system, 
fixing such a Haar system  $\mu$
 establishes an identification between 
$\Gamma_c(\mathcal{S}(\mathbb{G}M))$
and $\Gamma_c(\mathcal{S}(\mathbb{G}M, |\Lambda|\mathcal{V}))$
via the isomorphism of topological vector spaces:
\begin{align}
    I_\mu: \Gamma_c(\mathcal{S}(\mathbb{G}M)) \longrightarrow \Gamma_c(\mathcal{S}(\mathbb{G}M, |\Lambda|\mathcal{V})); \quad \tilde{\kappa}\longmapsto \kappa =\tilde{\kappa}\mu.\label{identificationL}
\end{align}

\subsection{Convolution kernels and semi-classical symbols as geometric items}

\subsubsection{Convolution kernels as vertical Schwartz densities}
We start with the following general observation:

\begin{lemma}
\label{lem_presVbundle}
Let $\mathbb{F}: \mathbb{G}M\to \mathbb{G}N$ be a smooth map.
Assume that  $\mathbb{F}_x:\mathbb{G}_xM\to\mathbb{G}_yN$ (with $y=\pi_{\mathbb{G}N}(\mathbb{F}(z_x)$) is a group morphism  for each $x\in M$. Then, the differential of $\mathbb{F}$ preserves the vertical bundles:
$$
d\mathbb{F}\left(\mathcal{V}(\mathbb{G}M)\right) \subseteq \mathcal{V}(\mathbb{G}N).
$$
\end{lemma}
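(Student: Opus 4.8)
The plan is to reduce the statement to the standard description of the vertical bundle of a fiber bundle $F$ as the kernel of the differential $d\pi_F$ of its projection (recalled above), combined with the fact that $\mathbb{F}$ is a bundle map over a base map. First I would unwind the hypothesis: asserting that for each $x\in M$ the restriction $\mathbb{F}_x$ of $\mathbb{F}$ to the fiber $\mathbb{G}_xM$ is a group morphism into $\mathbb{G}_yN$ entails, in particular, that $\mathbb{F}$ sends the whole fiber $\mathbb{G}_xM$ into a single fiber $\mathbb{G}_{\phi(x)}N$, where $\phi(x):=\pi_{\mathbb{G}N}(\mathbb{F}(z_x))$ is independent of the choice of $z_x\in\mathbb{G}_xM$. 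Equivalently, $\pi_{\mathbb{G}N}\circ\mathbb{F}=\phi\circ\pi_{\mathbb{G}M}$. Since the bundle projection $\pi_{\mathbb{G}M}$ admits local smooth sections $s$ (for instance the identity section of $\mathbb{G}M$), we have $\phi=\pi_{\mathbb{G}N}\circ\mathbb{F}\circ s$ locally, so $\phi:M\to N$ is smooth.

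Next I would differentiate the intertwining identity $\pi_{\mathbb{G}N}\circ\mathbb{F}=\phi\circ\pi_{\mathbb{G}M}$ at an arbitrary point $z\in\mathbb{G}M$, writing $x=\pi_{\mathbb{G}M}(z)$. This gives the commuting identity of linear maps $T_z(\mathbb{G}M)\to T_{\phi(x)}N$
\begin{equation*}
d_{\mathbb{F}(z)}\pi_{\mathbb{G}N}\circ d_z\mathbb{F} \;=\; d_x\phi\circ d_z\pi_{\mathbb{G}M}.
\end{equation*}
By definition $\mathcal{V}_z(\mathbb{G}M)=\ker d_z\pi_{\mathbb{G}M}$ and $\mathcal{V}_{\mathbb{F}(z)}(\mathbb{G}N)=\ker d_{\mathbb{F}(z)}\pi_{\mathbb{G}N}$. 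Hence, for $X\in\mathcal{V}_z(\mathbb{G}M)$ we have $d_z\pi_{\mathbb{G}M}(X)=0$, so the displayed identity yields $d_{\mathbb{F}(z)}\pi_{\mathbb{G}N}\bigl(d_z\mathbb{F}(X)\bigr)=d_x\phi(0)=0$, that is, $d_z\mathbb{F}(X)\in\ker d_{\mathbb{F}(z)}\pi_{\mathbb{G}N}=\mathcal{V}_{\mathbb{F}(z)}(\mathbb{G}N)$. As $z$ and $X\in\mathcal{V}_z(\mathbb{G}M)$ were arbitrary, this is precisely $d\mathbb{F}(\mathcal{V}(\mathbb{G}M))\subseteq\mathcal{V}(\mathbb{G}N)$.

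No genuine difficulty arises: the argument is formal, and in fact it uses only the bundle-map property of $\mathbb{F}$ (that it covers a smooth base map $\phi$), not the full strength of the fiberwise group-morphism hypothesis — the latter is recorded because in our applications $\mathbb{F}$ will be an osculating-type map whose restriction to each fiber is genuinely a group morphism, which is what guarantees that the target fiber depends only on the base point. The only point requiring a word of care is the well-definedness and smoothness of the base map $\phi$, which is handled above via local sections of the submersion $\pi_{\mathbb{G}M}$.
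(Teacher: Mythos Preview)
Your proof is correct but takes a genuinely different route from the paper's. The paper works explicitly with the isomorphism $\text{vert}:(\pi_{\mathbb{G}M})^*(\mathfrak{G}M)\to\mathcal{V}(\mathbb{G}M)$ from Example~\ref{ex_VG}: for $V\in\mathfrak{G}_xM$ it computes
\[
d_{z_x}\mathbb{F}_x\bigl(\text{vert}_z(V)\bigr)
=\tfrac{d}{dt}\big|_{t=0}\mathbb{F}_x\bigl(z_x\exp_{\mathbb{G}_xM}(tV)\bigr)
=\tfrac{d}{dt}\big|_{t=0}\mathbb{F}_x(z_x)\exp_{\mathbb{G}_yN}\bigl(t\mathfrak{F}_x(V)\bigr)
=\text{vert}_{\mathbb{F}(z)}\bigl(\mathfrak{F}_x(V)\bigr),
\]
using the fiberwise group-morphism property in the middle equality. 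Your argument is more abstract: you observe that the hypothesis forces $\mathbb{F}$ to cover a smooth base map $\phi$, then differentiate $\pi_{\mathbb{G}N}\circ\mathbb{F}=\phi\circ\pi_{\mathbb{G}M}$ and use $\mathcal{V}=\ker d\pi$. Your approach is cleaner and, as you note, uses only the bundle-map structure; the paper's approach actually exploits the group-morphism hypothesis and in return yields the explicit formula $d\mathbb{F}\circ\text{vert}=\text{vert}\circ\mathfrak{F}$, which is exactly what is needed immediately afterwards to define and compute pullbacks of vertical densities.
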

\begin{proof}
For each $z\in \mathbb GM$ and $V\in \mathfrak{G}_xM$ with $x=\pi_{\mathbb GM} (z)$, we have:
\begin{align*}
    d_{z_x}\mathbb{F}_x\left(\text{vert}_z(V)\right)
    &=\frac{d}{dt}\bigg|_{t=0}\mathbb{F}_x\left(z_x\exp_{\mathbb{G}_xM}(tV)\right)
    =\frac{d}{dt}\bigg|_{t=0}\mathbb{F}_x(z_x)\exp_{\mathbb{G}_yN}(t\mathfrak{F}_x(V))\\
    &=\text{vert}_{\mathbb{F}(z)}\left(\mathfrak{F}_x(V)\right),
 \end{align*}
where $\mathfrak{F}_x(V):=\frac{d}{dt}\big|_{t=0}\mathbb{F}_x\left(\exp_{\mathbb{G}_xM}(tV)\right)$.
\end{proof}

Given a map $\mathbb{F}$ as in 
Lemma \ref{lem_presVbundle}, we may now  define the 
 pullback of 
 a vertical density $\nu$ on $\mathbb{G}N$ to a vertical density $\mathbb{F}^*\nu$  on $\mathbb{G}M$ via
\begin{align*}
    \left(\mathbb{F}^*\nu\right)_{z}(V_1, \ldots, V_n):=\nu_{\mathbb{F}(z)}(d_{z_x}\mathbb{F}_x(V_1), \ldots, d_{z_x}\mathbb{F}_x(V_n)); \quad V_1, \ldots, V_n\in T_{z_x}\left(\mathbb{G}_xM\right), \, x=\pi_{\mathbb G M }(z).
\end{align*}
If in addition $\mathbb{F}$ is a diffeomorphism  (and hence an isomorphism of filtered manifold), the pushforward is defined by $\mathbb{F}_*:=(\mathbb{F}^{-1})^*$. 

Given a morphism of filtered manifolds $\Phi: M\to N$, 
we may apply the above to $\mathbb F=\mathbb G\Phi$ the osculating map, 
and define
the pullback  $(\mathbb{G}\Phi)^*\nu$ of a vertical density $\nu$ on $\mathbb{G}N$ to $\mathbb{G}M$.
If $\Phi$ is in addition a diffeomorphism, 
the pushforward yields the following isomorphism of topological vector spaces:
$$
\mathbb G\Phi^*: \Gamma_c\left(\mathcal{S}\left(\mathbb{G}N, |\Lambda|\mathcal{V}\right)\right)\rightarrow \Gamma_c\left(\mathcal{S}\left(\mathbb{G}M, |\Lambda|\mathcal{V}\right)\right).
$$

If $\mu$ and $\nu$ are vertical densities on $M$ and $N$ respectively, we define the operator 
$$
\mathcal{I}_\Phi: \Gamma_c\left(\mathcal{S}(\mathbb{G}N)\right)\to\Gamma_c\left(\mathcal{S}(\mathbb{G}M)\right), \quad \mbox{via}\quad
    \mathbb{G}\Phi^*\circ I_\nu =I_{\mu}\circ \PullOne.
$$

\begin{lemma}\label{lem:geometry}
Assume $M$ and $N$ are open subsets of graded Lie groups~$G$ and~$H$ (resp.). If  $\Phi:M \to N$ is a diffeomorphism that preserves the filtrations of the group, then it is an isomorphism of  filtered manifolds and
the map~$\mathcal{I}_\Phi$ agrees with~\eqref{def:map_kernel}.
\end{lemma}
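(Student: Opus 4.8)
The plan is to unwind the definition $\mathcal I_\Phi = I_\mu^{-1}\circ(\mathbb G\Phi)^*\circ I_\nu$ under the canonical trivialisations of the osculating bundles of the open sets $M\subset G$ and $N\subset H$, and to match the abstract osculating map $\mathbb G\Phi$ with the Pansu derivative. The first input is Lemma~\ref{lem_sumsec:FiltPres}: since $\Phi$ is a diffeomorphism preserving the filtration of the group, it — and its inverse, whose matrix $\mathfrak d_{\Phi(x)}\Phi^{-1}=(\mathfrak d_x\Phi)^{-1}$ is again block-upper-triangular — are morphisms of filtered manifolds, so $\Phi$ is an isomorphism of filtered manifolds; moreover, by the same lemma, $\mathbb G_x\Phi=\PD_x\Phi$ and $\mathfrak G_x\Phi=\pd_x\Phi$ at every $x\in M$, and by Corollary~\ref{cor:iso} the map $\PD_x\Phi$ is a group isomorphism $G\to H$.

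Next I would describe the trivialisations. For $x\in M$ the tangent filtration on $M\subset G$ is $H^i_x=\tau^G_x(\mathfrak g_1\oplus\cdots\oplus\mathfrak g_i)$, so $\tau^G_x$ induces an isomorphism of graded vector spaces $\mathfrak g\to\mathfrak G_xM$ depending smoothly on $x$; because $[\mathfrak g_i,\mathfrak g_j]\subseteq\mathfrak g_{i+j}$, this is an isomorphism of graded Lie algebras, whence $\mathfrak GM\cong M\times\mathfrak g$ and $\mathbb GM\cong M\times G$ as bundles of graded Lie algebras and groups, and likewise $\mathbb GN\cong N\times H$. Under these identifications, $\Gamma_c(\mathcal S(\mathbb GN))$ becomes $\mathcal C^\infty_c(N,\mathcal S(H))$ via $\tilde\kappa\leftrightarrow\big((y,w)\mapsto\tilde\kappa_y(w)\big)$; the Haar density on $M$ (resp.\ $N$) associated with the fixed Haar measure of $G$ (resp.\ $H$) is the constant section $x\mapsto\mu_x=\mathrm{Haar}_{\mathfrak g}$ (resp.\ $y\mapsto\nu_y=\mathrm{Haar}_{\mathfrak h}$); and $\mathbb G\Phi(x,z)=(\Phi(x),\PD_x\Phi(z))$ by Lemma~\ref{lem_sumsec:FiltPres}. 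Applying the computation in the proof of Lemma~\ref{lem_presVbundle} to $\mathbb F=\mathbb G\Phi$, the restriction of $d_{(x,z)}\mathbb G\Phi$ to the vertical space $\mathcal V_{(x,z)}(\mathbb GM)\cong\mathfrak g$ is, through the maps $\text{vert}$, exactly $\pd_x\Phi\colon\mathfrak g\to\mathfrak h$.

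Then I would run the diagram chase. For $\tilde\kappa$ as above, $I_\nu\tilde\kappa$ is the vertical density $(y,w)\mapsto\tilde\kappa_y(w)\,\check\nu_{(y,w)}$, so by the pullback formula for vertical densities applied along $\mathbb G\Phi$, at $(x,z)$ one gets $\tilde\kappa_{\Phi(x)}(\PD_x\Phi(z))$ times $(\pd_x\Phi)^*\nu_{\Phi(x)}$; since $\pd_x\Phi$ is linear, $(\pd_x\Phi)^*\nu_{\Phi(x)}=|\!\det\pd_x\Phi|\,\mu_x$ in the bases normalising $\mu_x,\nu_{\Phi(x)}$, and $|\!\det\pd_x\Phi|$ — being the Jacobian of the group isomorphism $\PD_x\Phi$ with respect to the Haar measures, as $\pd_x\Phi$ is its differential at the identity — equals $J_\Phi(x)$ by Theorem~\ref{thm:LargeStatement}(ii)(2). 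Hence
\[
\big((\mathbb G\Phi)^*(I_\nu\tilde\kappa)\big)_{(x,z)}=J_\Phi(x)\,\tilde\kappa_{\Phi(x)}\!\big(\PD_x\Phi(z)\big)\,\check\mu_{(x,z)},
\]
and applying $I_\mu^{-1}$ reads off the scalar factor, giving $(\mathcal I_\Phi\tilde\kappa)_x(z)=J_\Phi(x)\,\tilde\kappa_{\Phi(x)}(\PD_x\Phi(z))$, which is precisely~\eqref{def:map_kernel}. I expect the only genuinely delicate point to be the reconciliation in the second paragraph of the abstract osculating data with the concrete objects $\PD_x\Phi,\pd_x\Phi,J_\Phi$: this is where Lemma~\ref{lem_sumsec:FiltPres} and the Jacobian identity of Theorem~\ref{thm:LargeStatement}(ii)(2) do the real work, while the rest is bookkeeping with the identifications $\text{vert}$, $I_\mu$ and $I_\nu$.
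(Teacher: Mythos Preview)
Your proposal is correct and follows essentially the same route as the paper: trivialise the osculating bundles, identify $\mathbb G_x\Phi$ with $\PD_x\Phi$ via Lemma~\ref{lem_sumsec:FiltPres}, and pull back the vertical density to pick up the Jacobian factor. You are simply more explicit than the paper about the isomorphism-of-filtered-manifolds claim (checking $\Phi^{-1}$ also preserves the filtration) and about the identification $|\det\pd_x\Phi|=J_\Phi(x)$ via Theorem~\ref{thm:LargeStatement}(ii)(2), which the paper uses implicitly when writing $\Phi^*|dz|=J_\Phi|dz|$.
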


This result shows that convolution kernels have the geometric structure of vertical Schwartz densities.
\smallskip 

Lemma~\ref{lem:geometry}  motivates $\Gamma_c\left(\mathcal{S}\left(\mathbb{G}M, |\Lambda|\mathcal{V}\right)\right)$ as the geometric space of convolution kernels for future study of semi-classical pseudodifferential operators on filtered manifolds. 
It is also interesting to notice that 
setting 
$ \kappa^{(\eps)}:=\left(\delta_\eps\right)_*\kappa$,
we have for any smooth Haar system, by homogeneity of Haar measures on graded Lie groups, 
\begin{align*}
    \left(\left(\delta_\eps\right)_*\kappa\right)_x(z)=(\delta^*_{\eps^{-1}}\kappa)_x(z)=(\delta_{\eps^{-1}}^*(\widetilde{\kappa}\mu))_x(z)=\widetilde{\kappa}_x(\delta_{\eps^{-1}}z)\delta_{\eps^{-1}}^*\mu_x=\eps^{-Q}\widetilde{\kappa}_x(\delta_{\eps^{-1}}z)\mu_x=:\widetilde{\kappa}_x^{(\eps)}(z)\mu_x,
\end{align*}
where $Q$ is the (constant) homogeneous dimension of the group fibers of $\mathbb{G}M$.

\begin{proof}[Proof of Lemma~\ref{lem:geometry}]
Let $G$ and $H$ be graded Lie groups. 
We may assume that both groups are modeled on $\mathbb{R}^n$, in which case the Lebesgue measure $|dz|$ is a Haar measure for both. If $U$ is an open subset of $G$, then 
$$
\mathcal{V}(\mathbb{G}U)\cong\left(U\times G\right)\times\mathfrak{g}
\qquad\mbox{whence}\qquad 
|\Lambda|\mathcal{V}(\mathbb{G}U)\cong\left( U\times G\right)\times |\Lambda|\mathfrak{g},
$$
and similarly for any open subset of~$H$. With these identifications, the vertical lift of $|dz|$ on $U\subset G$ to $|\Lambda|\mathcal{V}(\mathbb{G}U)$ is also written $|dz|$. By~\eqref{verticalDensity},  any vertical density~$\kappa$ on~$H$ has the form
\begin{align*}
    \kappa_x(z)=\tilde{\kappa}_x(z)|dz|.
\end{align*}
Suppose $\Phi: U\subset G \to \Phi(U)\subset H$ is a filtration-preserving diffeomorphism from the open subset $U\subset G$ to $\Phi(U)\subset H$.  
By Lemma \ref{lem_sumsec:FiltPres}, we have 
$\mathbb G_x \Phi(z)=\PD_x\Phi(z)$ for any $z\in \mathbb{G}_xU=G$, so
\begin{align*}
    \left(\mathbb G\Phi^*\kappa\right)_{x}(z):=\widetilde{\kappa}_{\Phi(x)}(\mathbb{G}_x\Phi( z))\Phi^*|dz|=\tilde{\kappa}_{\Phi(x)}(\PD_x\Phi(z)) J_\Phi(z)|dz|.
\end{align*}
Therefore, vertical Schwartz densities on groups 
transform according to~\eqref{def:map_kernel}. 
\end{proof}

\subsubsection{Towards a set of semi-classical symbols on filtered manifolds}
The {\it fiberwise Fourier transform} of $\kappa\in \Gamma_c\left( \mathcal{S}(\mathbb{G}M, |\Lambda|\mathcal{V})\right)$  is defined as 
\begin{align*}
    \mathcal{F}(\kappa)(x, \pi):=\int_{z\in \mathbb{G}_xM}\kappa(z_x)\pi(z_x)^*; \quad \pi\in \widehat{\mathbb{G}}_xM.
\end{align*}
Here $\widehat{\mathbb{G}}_xM$ is the unitary dual to the group fiber $\mathbb{G}_xM$ (note that this does not require a choice of Haar system).
Then, a natural candidate for the set of {\it semi-classical symbols} on $M$  is the image of $\Gamma_c\left( \mathcal{S}(\mathbb{G}M, |\Lambda|\mathcal{V})\right)$ by $\mathcal{F}$; we denote it by $\mathcal{A}_0(\widehat{\mathbb{G}} M)$. Elements of this space are fields of operators on $\widehat{\mathbb{G}} M:=\cup_{x\in M}\widehat{\mathbb{G}}_xM$ (modulo intertwiners). In the case where $M$ is an open subset of a graded group $G$,  $\mathcal{A}_0(\widehat{\mathbb{G}} M)$ coincides with the set $\mathcal A_0(U\times \widehat G)$ of Section~\ref{subsubsec:sc}.
\smallskip 

Given an isomorphism $\Phi:M\to N$ of filtered manifolds,  the operator defined by
\begin{align*}
    \widehat{\mathbb{G}}\Phi: \widehat{\mathbb{G}}M\longrightarrow\widehat{\mathbb{G}}N; \quad \pi_x\longmapsto \pi_x\circ \mathbb{G}_{\Phi(x)}\Phi^{-1}.
\end{align*}
is a generalization of \eqref{unitaryDualMap}. 
Besides, for $\kappa\in \Gamma_c\left( \mathcal{S}(\mathbb{G}N, |\Lambda|\mathcal{V})\right)$ and $\sigma(x, \pi)=\mathcal{F}(\kappa)(x, \pi)$ we have
\begin{align*}
    \mathcal{F}(\mathbb{G}\Phi^*\kappa)(x, \pi)
    =\widehat{\mathbb{G}}\Phi^* \sigma(x, \pi)
\end{align*}
So when $M$ and $N$ are open subsets of graded Lie groups, we are left with \eqref{unitaryPullBack} and the geometric frame is consistent. 
 
 \bigskip

\section*{Acknowledgements}
 
The authors acknowledge the support of The Leverhulme Trust for this work via Research Project Grant  2020-037, {\it Quantum limits for sub-elliptic operators}. 

We also thank Rapha\"el Ponge for pointing out to us 
his result with Choi Woocheol on Pansu differentiability  on filtered manifolds in 
\cite{Choi+Ponge} - unfortunately after the publication of the present paper to Journal of Geometric Analysis.

\end{document}